\let\oldsqrt\sqrt
\def\sqrt{\mathpalette\DHLhksqrt}
\def\DHLhksqrt#1#2{%
\setbox0=\hbox{$#1\oldsqrt{#2\,}$}\dimen0=\ht0
\advance\dimen0-0.2\ht0
\setbox2=\hbox{\vrule height\ht0 depth -\dimen0}%
{\box0\lower0.4pt\box2}}
\def\bn{\boldsymbol{n}}
\def\bm{\boldsymbol{m}}
\def\b0{\boldsymbol{0}}
\newcommand{\R}     {\mathbb{R}} 
\newcommand{\N}     {\mathbb{N}} 
\renewcommand{\P}   {\mathbb{P}} 
\newcommand{\E}     {\mathbb{E}}
\newcommand{\Ccal}   {{\mathcal C }} 
\newcommand{\Ecal}   {{\mathcal E }} 
\newcommand{\Gcal}   {{\mathcal G }}
\newcommand{\Kcal}   {{\mathcal K }} 
\newcommand{\Mcal}   {{\mathcal M }}
\newcommand{\Pcal}   {{\mathcal P }}
\newcommand{\Scal}   {{\mathcal S }} 
\newcommand{\Ucal}   {{\mathcal U }} 
\newcommand{\Vcal}   {{\mathcal V }} 
\newcommand{\Wcal}   {{\mathcal W }} 
\newcommand{\Xcal}   {{\mathcal X }}
\newcommand{\Exp}{\mathscr{E}\kern-0.2mm{\operatorname{xp}}}
\newcommand{\Log}{\mathscr{L}\kern-0.2mm{\operatorname{og}}}
\def\1{{\mathchoice {1\mskip-4mu\mathrm l}      
{1\mskip-4mu\mathrm l} 
{1\mskip-4.5mu\mathrm l} {1\mskip-5mu\mathrm l}}}
\numberwithin{equation}{section}
\numberwithin{figure}{section}
\newtheoremstyle{plain}
  {6pt}
  {4pt}
  {\slshape}
  {}
  {\bfseries}
  {.}
  {0.5em}
  {}%
\newtheorem{thm}{\protect\theoremname}
  \newtheorem{defn}[thm]{\protect\definitionname}
  \newtheorem{prop}[thm]{\protect\propositionname}
  \newtheorem{lem}[thm]{\protect\lemmaname}
  \numberwithin{thm}{section}
  \providecommand{\corollaryname}{Corollary}
  \providecommand{\definitionname}{Definition}
  \providecommand{\factname}{Fact}
  \providecommand{\propositionname}{Proposition}
  \providecommand{\remarkname}{Remark}
\providecommand{\theoremname}{Theorem}
\providecommand{\lemmaname}{Lemma}
\title{Exponential decay of transverse correlations 
\\
for $O(N)$  spin systems and related models
 }
\author[1]{Benjamin Lees\footnote{Heilbronn Institute for Mathematical Research and School of Mathematics, University of Bristol. Email: \textit{benjamin.lees@bristol.ac.uk}}}
\author[2]{Lorenzo Taggi\footnote{Sapienza Universit\'a di Roma. Email: \textit{lorenzo.taggi@uniroma1.it}}}
\affil[1]{Heilbronn Institute - University of Bristol}
\affil[2]{Sapienza Universit\'a di Roma}
 \date{\today}
\begin{document}
\maketitle 

\begin{abstract}
We prove exponential decay of transverse correlations in the Spin $O(N)$ model for arbitrary (non-zero) values of the external magnetic field and arbitrary  spin dimension $N > 1$. Our result is new when $N > 3$, in which case no Lee-Yang theorem is available,
it  is an alternative to Lee-Yang when $N = 2, 3$, and also holds for a wide class of multi-component spin systems with continuous symmetry.
The  key ingredients  are a representation of the model as a system of coloured random paths,  a `colour-switch' lemma, and a 
 sampling procedure which allows us to  bound from above the `typical' length of the open paths.
\end{abstract}

\section{Introduction}
The Spin $O(N)$ model is a classical statistical mechanics model whose configurations are collections of unit vectors, called spins, taking values on the surface of a $N-1$ dimensional unit sphere, $\mathbb{S}^{N-1} \subset \mathbb{R}^N$, with each spin associated to the vertex of a graph. Some special cases of the Spin $O(N)$ model are the Ising model ($N=1$), the XY model ($N=2$),  and  the classical Heisenberg model ($N = 3$). Despite the fact that  it is a very classical model, there remain important gaps in understanding, particularly in the case $N>2$. This paper addresses a basic and important question, namely how fast do correlations between spins decay with the distance between their associated vertices when a non-zero external magnetic field is present? More concretely, we consider transverse correlations in the presence of an external magnetic field parallel to the $\boldsymbol{e}_N$ cartesian vector of arbitrary (non-zero) intensity, namely correlations between the $i$-th component of the spins for any $i \in \{1, \ldots, N - 1\}$. Our main result states that, for any value of the inverse temperature and any non-zero value of the external magnetic field, transverse correlations decay exponentially fast with the graph distance between the two vertices (in the literature one refers to the exponential decay of correlations as a \textit{mass-gap condition}). Our proof method is probabilistic, it uses a  new  representation of the model
as a system of random walks and loops, which employs \textit{colours} and \textit{pairings}, and a sampling procedure which allows us to stochastically bound the length of a random walks of a given colour by `exploring' the realisation `step by step', thus enabling a comparison with a simpler stochastic process.

When $N = 1, 2, 3$,  the mass-gap condition for arbitrary non-zero value of the external field is a  consequence of  the cluster expansion and of the celebrated Lee-Yang theorem (see the recent papers \cite{F-PFR, F-PFR2}, an alternative approach for the $N=1$ case is presented in \cite{Ott}).
The Lee-Yang theorem was proved in  \cite{L-S} when $N=2$ (in the same paper results involving the $N>2$ cases
are also derived, but these require anisotropic coupling constants),
and in \cite{Asano, Dunlop, S-F} when $N = 3$, by taking an appropriate limit of the corresponding quantum system.
In the absence of a Lee-Yang theorem when $N>3$, the cluster expansion provides only perturbative results, i.e, the mass-gap condition can only be proven for large  enough  (positive or negative) values of the external magnetic field.

Our result is new when $N > 3$ and, for any $N>1$, our method provides a new direct proof of the  mass-gap condition for transverse correlations bypassing the Lee-Yang analiticity result and the cluster expansion. 
Additionally, our proof is also quite flexible and, for example, it holds for any graph of bounded degree,  it holds on $\mathbb{Z}^d$ with finite range (not necessarily translation invariant) coupling constants, and it holds for a class of models with continuous symmetry whose interaction does not necessarily take the form $e^{ - \mathcal{H}}$ (with $\mathcal{H}$ representing the hamiltonian function) -- these models are `less physical' but they lead to interesting random loop models, for example the loop O(N) model \cite{Chayes,  D-Copin, S-P, T2} (see Section \ref{sect:extensions}).

\subsection{Model and main result}
\label{sec:definition}
We define the Spin $O(N)$ model on an arbitrary graph  with uniform coupling constants and zero boundary conditions
and we refer to Section \ref{sect:extensions} for extensions.
Consider a finite simple graph $\Gcal=(\Vcal,\Ecal)$ and, for $N\in\N_{>0}$, define the configuration space $\Omega_{\Gcal,N} :=(\mathbb S^{N-1})^{\Vcal}$,
where $\mathbb{S}^{N-1} \subset \mathbb{R}^N$ is the $N-1$ dimensional unit sphere. 
For $\beta\geq 0$ and $h\in \R$ we introduce the hamiltonian function acting on $\varphi=(\varphi_x)_{x\in \Vcal}\in\Omega_{\Gcal,N}$,
\begin{equation}\label{eq:hamiltonian}
H^{spin}_{\Gcal,N,\beta,h}(\varphi)= -  \beta \sum_{\{x,y\}\in \Ecal}\varphi_x\cdot\varphi_y -  h\sum_{x\in \Vcal}\varphi^N_x,
\end{equation}
where $\, \cdot \, $ denotes the usual inner product on $\R^N$, the first sum is over undirected edges,  and $\varphi^i_x$ is the $i^{th}$ component of the vector $\varphi_x\in\mathbb S^{N-1}\subset\R^N$. We define the expectation operator $\langle\cdot\rangle^{spin}_{\Gcal,N,\beta,h}$ acting on $f:\Omega_{\Gcal,N}\to \R$ by
\begin{equation}
\langle f\rangle^{spin}_{\Gcal,N,\beta,h}=\frac{1}{Z^{spin}_{\Gcal,N,\beta,h}}\int_{\Omega_{\Gcal,N}}\mathrm{d}\varphi \, f(\varphi) \, e^{-H^{spin}_{\Gcal,N,\beta,h}(\varphi)},
\end{equation}
where $\mathrm{d}\varphi = \prod_{x \in \mathcal{V}} \mathrm{d}\varphi_x$ is a product measure with $\mathrm{d}\varphi_x$ the uniform measure on $\mathbb{S}^{N-1}$ and $Z^{spin}_{\Gcal,N,\beta,h}$ is a normalising constant that ensures $\langle 1\rangle^{spin}_{\Gcal,N,\beta,h}=1$. Our main result concerns correlations between spins $\varphi_x$, $\varphi_y$ when the graph distance from $x$ to $y$, $d_{\Gcal}(x,y)$, is large.
For any $x \in \mathcal{V}$, define the random variable $S_x : \Omega_{\mathcal{G}, N} \mapsto \mathbb{S}^{N-1}$
representing the spin at $x$ as,
$
S_x(\varphi) : = \varphi_x,
$
moreover we represent its components as 
$
S_x = (S_x^1, \ldots, S_x^N ).
$

\hypertarget{c0}{
\begin{thm}\label{thm:maintheorem} 
Let $\mathcal{G}$ be an infinite simple graph with  bounded degree.
For any $h\neq 0$, $\beta \geq 0$ and $N\in \N_{\geq 2}$ there are positive constants $c_0 = c_0 (\mathcal{G}, \beta,h, N)$ and 
$C_0 = C_0 (\mathcal{G}, \beta,h, N)$
such that the following holds. Let  $(\mathcal{G}_L)_{L \in \mathbb{N}}$, with 
$\mathcal{G}_L = ( \mathcal{V}_L, \mathcal{E}_L)\subset \Gcal$,
be an arbitrary sequence of finite graphs. Then,
for any $L \in \mathbb{N} $ and any $x, y \in \mathcal{V}_L$,
\begin{equation}\label{eq:mainclaim}
 \langle S^1_x S^1_y\rangle^{spin}_{\mathcal{G}_L,N,\beta,h} \leq C_0 e^{-c_0 \, d_{\Gcal}(x,y)},
\end{equation}
where $d_{\Gcal}(x,y)$ denotes the graph distance between $x$ and $y$ in $\Gcal$. Moreover,  the choice of $c_0$ can be made so that, $c_0 = O(h^2)$ in the limit as $h \rightarrow 0$.
\end{thm}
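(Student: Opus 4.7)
The plan is to first introduce a random path representation of the Spin $O(N)$ model, obtained by Taylor-expanding each edge factor $e^{\beta\,\varphi_x\cdot\varphi_y}=\prod_{i=1}^N e^{\beta\varphi_x^i\varphi_y^i}$ and each vertex factor $e^{h\varphi_x^N}$ in the Boltzmann weight. The edge expansion assigns to every edge a vector of non-negative integers $(m_e^1,\dots,m_e^N)$ of ``coloured links'', while the magnetic-field expansion plants ``coloured-$N$ monomers'' at vertices (with weight proportional to a power of $h$). Integrating each spin against the uniform measure on $\mathbb{S}^{N-1}$ yields a combinatorial weight depending only on the colour-by-colour degree at each vertex. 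By adding a uniform random pairing of the half-links of each colour at each vertex, as in the standard random-current/loop viewpoint, one converts the partition function into a non-negative sum over configurations consisting of mutually interacting coloured loops (for colours $1,\dots,N-1$) together with coloured-$N$ paths whose endpoints are anchored at field-monomers. The insertion $S_x^1 S_y^1$ then corresponds, after expansion, to the same representation but constrained to contain one extra \emph{open} path of colour $1$ with endpoints $x$ and $y$.

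The second ingredient I would establish is a \emph{colour-switch lemma}: at any vertex the pairing within a colour class and the parity structure of links allow a colour-$N$ paired segment to be exchanged with a colour-$1$ paired segment without altering the combinatorial weight, because the uniform spherical moments $\int\varphi^{2\alpha_1}\cdots\varphi^{2\alpha_N}\,\d\varphi$ are invariant under permutations of coordinates. This symmetry makes the open colour-$1$ path comparable, step by step, to a colour-$N$ path, and in particular lets one ``splice'' field-monomers into the open path to terminate it.

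Next I would set up a step-by-step exploration of the open colour-$1$ path from $x$. Starting from $x$, reveal the pairings and links at the current endpoint, move to the next vertex along the colour-$1$ path, and iterate. At each vertex visited, the colour-switch lemma combined with the presence of colour-$N$ monomers of weight $\sim h^2$ (pairs of monomers survive the parity constraint, single monomers do not) supplies a competing configuration in which the path terminates at that vertex. Comparing weights shows that, conditionally on the exploration history, the path continues with probability at most $1-p$ for some $p=p(\beta,h,N)>0$. Hence the probability of a colour-$1$ path of length $\ell$ is at most $(1-p)^{\ell}$, and the representation of $\langle S_x^1 S_y^1\rangle$ as a path probability gives \eqref{eq:mainclaim}. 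Tracking $p$ through the argument, the dominant small-$h$ contribution of paired monomers gives $p=\Omega(h^2)$, yielding $c_0=O(h^2)$.

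The main obstacle I expect is the exploration step: the measure on coloured paths is globally defined and not obviously Markovian along an explored trajectory, so one must carefully design a \emph{sampling procedure} that reveals only the minimal information needed at each step and then uses the colour-switch lemma to compare the revealed configuration against one where the path is absorbed by a field-monomer. Establishing this comparison uniformly — independently of the future of the exploration and of the bulk structure of loops elsewhere in the graph — is the technical heart of the argument; the representation and the symmetry lemma, while non-trivial to formalise, should be routine once this comparison is in place.
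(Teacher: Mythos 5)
Your proposal captures the overall architecture of the paper's proof --- a random path representation with colours and pairings, a colour-switch mechanism relating the open colour-$1$ path to absorption at field-monomers (equivalently, at the paper's ghost vertex), and a step-by-step exploration bounding the survival probability of the open path --- and the claimed $c_0 = O(h^2)$ scaling matches the paper's. However, there is one genuine gap in the exploration step: the uniform lower bound $p = p(\beta,h,N) > 0$ on the per-vertex termination probability is asserted without justification, and as stated it fails. At a vertex with local time $n$, the on-site weight is $\mathcal{U}(n) = \Gamma(N/2)/\big(2^n \Gamma(n + N/2)\big)$, so passing from $n$ to $n+1$ (which is what absorbing the path at a pair of field-monomers does) multiplies the weight by roughly $1/(2n+N)$. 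The relative weight of the terminating configuration therefore behaves like $h^2/n$ and degenerates as the local time at the visited vertex grows, so your weight comparison yields no $n$-uniform $p>0$. This is precisely the obstacle the paper's Section 3 is built to overcome: Lemmas \ref{lemma:linkcardinalityset} and \ref{lem:linkcardinalityevent} give an a priori tail estimate showing that, except on an event of probability exponentially small in $d_\Gcal(x,y)$, every self-avoiding path from $x$ to $y$ has a positive fraction of vertices with local time at most a fixed $k$. The final argument then conditions on the link/colour configuration on original edges (fixing the relevant local times), restricts to this good event, and applies the exploration only at ``$k$-candidate'' vertices, where the death probability is bounded below by $c_4/(k+1)$ with $c_4 = O(h^2)$. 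Without some version of this local-time control your comparison does not close, and this is a non-trivial missing ingredient rather than a routine detail.

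A secondary difference worth flagging: you explore the colour-$1$ path directly and invoke the colour-switch locally at each step, whereas the paper applies the colour-switch once, globally (Lemma \ref{lem:colourswitchlem}), rewriting $\langle S^1_x S^1_y\rangle$ as $h^{-2}\,\mathbb{E}_{G,N,\beta,h}[M_{x,y}]$ where $M_{x,y}$ counts $N$-walks with extremal links on $\{x,g\}$ and $\{y,g\}$, and only then explores those $N$-walks. The global reformulation is what turns ``the walk dies'' into a genuine event under a probability measure (the walk pairs to a ghost link), which in turn allows the clean stochastic comparison with i.i.d.\ geometric random variables in Lemma \ref{lem:domination}. Your local weight-comparison version could plausibly be pushed through, but it would still need the local-time input and additional care to convert weight ratios into a coupling; the paper's global colour-switch makes that step essentially automatic.
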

}
For example, our theorem holds when $\mathcal{G} = \mathbb{Z}^d$ and $\Gcal_L$ is a box of side length $L$ or when $\mathcal{G}$ is a regular tree and $\Gcal_L$ is the subtree of depth $L$. Our  result also holds for 
 non-zero boundary conditions, on $\mathbb{Z}^d$ with finite range (not necessarily translation invariant) coupling constants,  and for spin systems whose measure is not necessarily in the form $e^{ - \mathcal{H}}$, see extensions in Section \ref{sect:extensions}.

\subsection{Proof method}
The first step of the proof is a  representation of the Spin $O(N)$ model as a system of random undirected walks and loops, which may overlap and intersect each other. We collectively refer to walks and loops as \emph{paths}. Each path is given a \textit{colour} $i \in \{1, \ldots, N\}$ and the measure involves an  on-site weight function that penalises large numbers of overlaps. This representation corresponds to a combination of the ones introduced in \cite{B-U, L-T}, which are in turn related to the one of Brydges, Fr\"ohlich and Spencer \cite{B-F-S}, and the random current representation of the Ising model \cite{Aiz}. 
In our representation a  \emph{ghost vertex}, denoted by $g$, is added to the graph, with edges to each other vertex representing the external field. The correlation between the first component of the spins at $x$ and $y$ can be written as a ratio of two partition functions, the one in the denominator refers to a gas of loops of any colour and walks of colour $N$ ($N$-walks) with both end points at the ghost vertex,
the one in the numerator has,  in addition, a $1$-walk with end points $x$ and $y$.

The first (simple, but important)  step of our analysis
is a `colour-switch lemma'. We use a map which `transforms' the partition function in the numerator by switching  the colour of the $1$-walk to $N$ and adding two more steps to the walk that connect its end-points to the ghost vertex. This transformation allows us to show that the spin correlation equals  the expected number of $N$-walks with their two last steps  on  the edges  $\{x,g\}$ and $\{y,g\}$.
 
By the colour-switch lemma, deriving the exponential decay of transverse correlations is equivalent to showing that the expected number of such $N$-walks is exponentially small with respect to $d_{\Gcal}(x,y)$. The general idea of the proof is that every walk which starts from the edge $\{x,g \}$ has a positive probability to be paired to the ghost vertex at each of its steps, thus `dying' at that step, hence it cannot be too long.

The two main mathematical ingredients  for turning such a simple description into a  rigorous proof  are: \textbf{(i)} An upper bound  on the distribution of the \textit{local times},  which is defined as the number of visits of walks or loops to a vertices. A small local time is required since we can show that the probability that a walk `dies' at a given vertex is uniformly bounded away from 0 if the local time at that vertex not too large.
\textbf{(ii)} A \textit{sampling procedure}, which consists of sampling the random path configuration step by step by exploiting the spatial Markov property, thus controlling the various (many) dependencies  by enabling the comparison with simpler stochastic processes.

\paragraph{Organisation.}
In Section \ref{sect:representation} we introduce the random path representation of the Spin $O(N)$ model in the presence of an external magnetic field and present the colour-switch lemma. In Section \ref{sec:linkbounds} we provide bounds for the  distribution of the local times. In Section \ref{sect:samplingprocedure} we introduce
the sampling procedure. In Section \ref{sect:proof of theorem}
we present the proof of our main theorem and discuss some extensions.

\subsubsection*{Notation}
\begin{center}
	\begin{tabular}{ l l }
	
$\mathcal{G} = ( \mathcal{V}, \mathcal{E})$ &  an undirected, simple, finite graph \\

$G=(V,E)$ & the graph $\Gcal$ together with the ghost vertex $g$ \\

$e \in {E}$ or $\{x,y\} \in {E}$ & undirected edges \\

$x \sim y$ & two neighbour vertices, i.e, $x, y \in V$ such that $\{x,y\} \in E$ \\
 $N \in \mathbb{N}_{>0}$& the number of colours\\
 $[N]$ & $\{1,\dots,N\}$\\

$d_x,d^*_{\Gcal}$ &  the graph degree of $x\in\Vcal$ and $\max_{x\in\Vcal}d_x$\\

$d_\Gcal(x,y)$ &  the graph distance between $x$ and $y$ \\

$\mathcal{M}_{\mathcal{G}}$ & the set of link cardinalities on $\mathcal{G}$ (with  $G$ possibly replacing $\Gcal$)   \\

$\mathcal{C}_{\mathcal{G}}(m)$ & the set of colourings for  $m\in\mathcal{M}_{\mathcal{G}}$   \\


$\mathcal{P}_{\mathcal{G}}(m, c)$ & the set of pairing configurations for $m\in\mathcal{M}_{\mathcal{G}}$ and $c\in\mathcal{C}_{\mathcal{G}}(m)$   \\
 
$w= (m,c, \pi)$ &  a wire configuration with $m \in\mathcal{M}_{\mathcal{G}}$, $c \in\mathcal{C}_{\mathcal{G}}(m)$, and
$\pi \in \mathcal{P}_{\mathcal{G}}(m, c)$  \\ 

$\mathcal{W}_\Gcal$ &  the set of wire configurations on $\mathcal{G}$ \\

$n^i_x(w)$ & the local time of $i$-objects at $x$ \\

$n_x(w)$ & $ \sum_{i=1}^{N} n_x^i(w)$\\

$u^i_x(w)$ & the number of unpaired $i$-links at $x$\\

$v^i_x(w)$ & the number of pairings of $i$-links at $x$\\






$Z_{\Gcal,N,\beta,U}(x,y)$ 
& the total weight of configurations with a 1-path from $x$ to $y$\\


$\mathbb{G}_{\Gcal,N,\beta,U}(x,y)$  
& {the two-point function between $x$ and $y$ in the random path model} \\



 
\end{tabular}
\end{center}

\section{The random path representation}
\label{sect:representation}
In this section we introduce a random path representation for the Spin $O(N)$ model in the presence of an external magnetic field. We refer to this representation as the \emph{Random Path Model} (RPM).
This representation corresponds to a combination of the 
one introduced in  \cite{B-U, L-T}, 
which was also used in \cite{T} in the study of the dimer model in $\mathbb{Z}^d$, $d \geq 3$,
and of the random current representation of the Ising model \cite{Aiz}.
Two key aspects of the representation are  \textit{pairings} and \textit{colours}, these are two  ingredients which are not present (or necessary) in the $N=1$ case 
\cite{Aiz}, the well-known Ising model, but which play a crucial role  in our analysis, which involves the $N >1 $ cases.
A random loop model (of different nature than ours) was also used in  \cite{Bj-U} for the study of quantum spin systems.

\subsection{Random path model}
We consider a general finite undirected simple graph $\mathcal{G} = (\mathcal{V}, \mathcal{E})$.
Let $N \in \mathbb{N}_{>0}$ be the \textit{number of colours}. 
A realisation of the RPM can be viewed as a collection of undirected (closed or open) paths with colours in $[N]:=\{1,\dots,N\}$.
A path is identified by a collection of links, a colouring and by pairings. 

To begin, denote by  $m \in  \mathcal{M}_{\mathcal{G}} := \mathbb{N}^{\Ecal}$ a \textit{collection of links} on $\Ecal$. More specifically, 
$$m = \big ( m_e \big )_{e \in \mathcal{E}},$$
where $m_{e}\in\N$ represents the number of links on $e\in \mathcal{E}$. We say a link is incident to $x\in \Vcal$ if it is on an edge incident to $x$.
\begin{figure}
\includegraphics[scale=0.40]{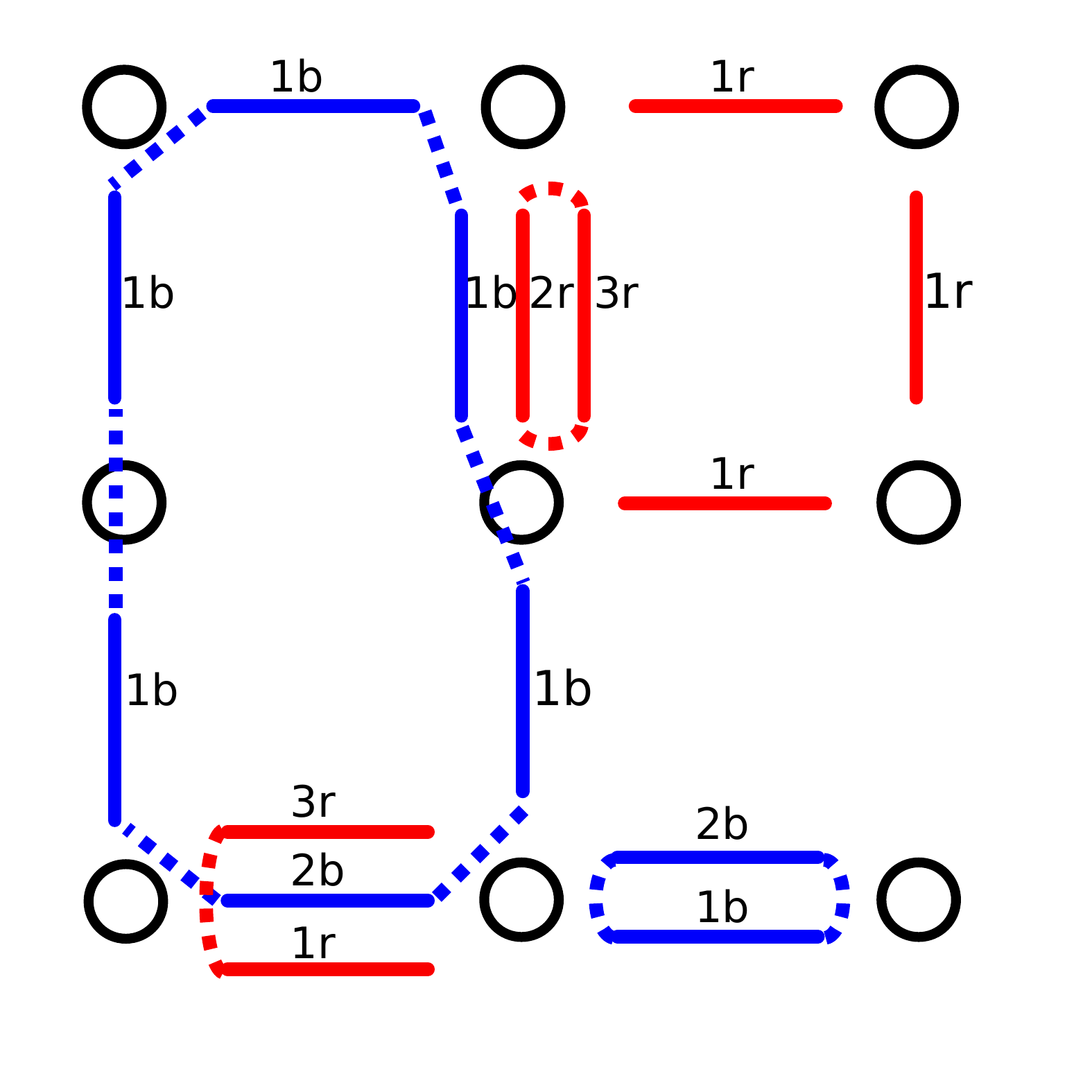}
\centering
\caption{
A configuration $w = (m, c, \gamma) \in \mathcal{W}_{\mathcal{G}}$, where $\mathcal{G}$ corresponds to the graph  $\{1, 2,  3\} \times \{1,2,3\}$ with edges connecting nearest neighbours and the lowest leftmost vertex corresponds to $(1,1)$.
On every edge $e$, the links are ordered and receive a label from $1$ to $m_e$.
In the figure, the numbers 1, 2, ...  are used for the identification of the links
and the letters $b$ and $r$ are used for the colours which are assigned to the links by $c$ (we assume that $N=2$ and that each link might be either blue or red).  Paired links are connected by a dotted line.
For example, the first link on the edge connecting the vertices $(1,1)$, $(2,1)$
is coloured  red,  it is paired at $(1,1)$ with the third link on the same edge and it is unpaired at $(2,1)$. Moreover, both links touching the vertex $(3,3)$ are red and they are unpaired at $(3,3)$.
Finally, no link is on the edge which connects the vertices $(1,2)$ and $(2,2)$. 
}
\label{Fig:pairingexample}
\end{figure}

Given $m \in \mathcal{M}_{\Gcal}$, a \textit{colouring} $c = (c_e)_{e \in \Ecal}$, with $c_e : \{1, \ldots, m_e\} \mapsto [N]$ is a function which assigns an integer (colour) in $[N]$ to each link. More precisely, we use $(e, p)$ to represent the $p$th link on the edge $e$, with $p \in \{1, \ldots, m_e\}$, 
and we let  $c\big ( (e,p) \big ) \in [N]$ be the colour of the $p^{th}$ link on $e \in \Ecal$.
A link with colour $i \in [N]$ is called an $i$-link. For $e\in \Ecal$ and $i\in[N]$, we denote by $m^i_e$ the number of $i$-links on $e$.
We let $\mathcal{C}_{\Gcal}(m)$ be the set of possible colourings $c = (c_e)_{e \in \Ecal}$ for $m$.

Given a {link configuration} $m \in \mathcal{M}_{\Gcal}$, and a colouring $c \in \mathcal{C}_{\Gcal}(m)$,  we say $ \pi = (\pi_x)_{x \in \Vcal}$ is a  \emph{pairing} of $(m,c)$ if, for each $x\in \Vcal$, $\pi_x$ pairs links on the edges incident to $x$ in such a way that if two links are paired, then they have the same colour. 
A link incident to $x$ is paired to at most one other link incident to $x$ and, possibly, it is not paired to any link at $x$
(formally, $\pi_x$ is a partition of the set of links touching $x$ so that  each element of the partition is a set containing either only one link or two links of the same colour).
We say two links are \emph{paired} if there is an $x\in \Vcal$ such that the links are paired at $x$. A link can be paired to at most two other links, one at each end point of its edge. We remark that, by definition, a link cannot be paired to itself. Denote by $\mathcal{P}_{\Gcal}(m,c)$ the set of all such pairings for $m\in\mathcal{M}_{\Gcal}$ and $c \in \mathcal{C}_{\Gcal}(m)$. Note that $ \mathcal{P}_{\Gcal}(m, c)$ generally has many elements, corresponding to the number of ways the links can be paired.

A \emph{wire configuration} on $\Gcal$ is an element $w = ( m, c, \pi)$
such that $m \in \mathcal{M}_{\Gcal}$, $c \in \mathcal{C}_{\Gcal}(m)$,  and $\pi \in \mathcal{P}_{\Gcal}(m, c)$. Let $\mathcal{W}_{\Gcal}$ be the set of  wire configurations on $\Gcal$. As we can see from the example in Figure \ref{Fig:pairingexample}, it follows that any $w \in \mathcal{W}_{\Gcal}$ can be viewed as a collection of closed or open \textit{paths}, open paths will be called \textit{walks} and closed paths will be called \textit{loops} (see the Appendix for a formal definition of such objects). For example, Figure \ref{Fig:pairingexample} presents three loops and four walks.
If the links of a loop or a walk have colour $i$, we might refer to it as an $i$-loop or an $i$-walk respectively.
By a slight abuse of notation, we will also view 
$m : \mathcal{W}_{\Gcal} \mapsto \mathcal{M}_{\Gcal}$
as a function such that, for  $w^{\prime} = ( m^\prime, c^{
\prime}, \pi^{\prime})$, $m(w^\prime) = m^\prime$.

Let $u^i_x(w)$ be the  \textit{number of $i$-links incident to $x$} which are unpaired at $x$ (i.e. the number of walk end points at $x$). Let $v^i_x(w)$ be the number of $i$-links incident to $x$ which are paired to another link at $x$, divided by two (i.e. the number of times a path passes through $x$), 
\begin{align}\label{eq:numerofihits}
v^i_x(w) & :=  \frac{1}{2} \Big ( \sum\limits_{y \sim x} m^i_{ \{x,y\}  }   - \,  u^i_x(w) \Big ).
\end{align}
Moreover, let $n_x^i(w) : = v^i_x(w)  + u^i_x(w)$ be the  \textit{local time of $i$-objects at $x$}. Unpaired  end-points of links touching  $x$ and pairs of paired links touching $x$ both contribute +1 to the local time.

Let $\mathcal{G} = (\mathcal{V}, \mathcal{E})$ be an arbitrary finite simple undirected graph, we want to  introduce a \textit{representation} for the Spin $O(N)$ model on $\mathcal{G}$ in the presence of an external magnetic field. Hence, we introduce a \emph{ghost vertex}  $g\notin \Vcal$, and  the graph $G=(V,E)$,  
with 
$$
V=\Vcal\cup\{g\} \quad \quad E=\Ecal\cup \big\{ \{x,g\} \, : \, x\in \Vcal\big\}.$$
We call any edge which is incident to the ghost vertex a \textit{ghost edge} and  any edge which is not an  \textit{original edge}.
We also call any vertex in $\mathcal{V}$ an \textit{original vertex}.

\begin{defn}\label{def:measure}
We let $\mathcal{W}^\prime_G$ be the set of configurations 
$w \in \mathcal{W}_G$  such that 
$v^N_g(w) = n^1_g(w) = \ldots = n^{N-1}_g(w) = 0$ and
$u^2_x(w) = \ldots = u^{N}_x(w)=  0$ for every $x \in \Vcal$.
Given $N \in \mathbb{N}_{>0}$, $\beta \in \mathbb{R}_{\geq 0}$, $h\in\R$, we  define the non-negative (not necessarily probability) measure $\mu_{G, N, \beta, h}$ on $\mathcal{W}^\prime_G$
as follows, for any $w = (m, c, \pi) \in \mathcal{W}^\prime_G$, 
\begin{equation}\label{eq:weight3}
\quad \mu_{G, N, \beta, h}(w) : =\Big  (\prod_{e \in \mathcal{E}} \frac{\beta^{m_e}}{m_e!} \Big ) \, \,
\Big  (\prod_{x \in \mathcal{V}} \frac{h^{m_{ \{x,g\} }}}{m_{\{x,g\}}!} \Big )  \, \, 
 \Big ( \prod_{x \in \Vcal} U_x(w) \Big ),
\end{equation}
where $U_x(w) := \mathcal{U}(n_x(w))$, with 
\begin{equation}\label{eq:weightfunction}
\forall r \in \mathbb{N} \quad \quad \mathcal{U}(r) : =   \frac{  \Gamma( \frac{N}{2} )  }{2^r  \, \, \Gamma( r + \frac{N}{2}   )},
\end{equation}
and $n_x=\sum_in^i_x$ is the local time at $x$.
Given a function
$f : \mathcal{W}^\prime_{G} \rightarrow \mathbb{R}$, we use the same notation for the expectation of $f$ under $\mu_{G,N, \beta, h}$,
$
\mu_{G,N, \beta, h}(f) : = \sum\limits_{w  \in \mathcal{W}^\prime_{G}}  f(w) \, \, \mu_{G,N, \beta, h}(w).
$
\end{defn}
In other words, at the ghost vertex  only unpaired end-points of $N$-links are allowed, while at the original vertices  either paired links of any colour or unpaired $1$-links are allowed, and a weight
 which depends on the local time is assigned. Thus, any realisation $w \in \mathcal{W}^\prime_G$ consists of open paths of colour $N$ with both end points at the ghost vertex, open paths of colour $1$ with end points at original vertices, and closed paths of any colour. 
Closed paths of colour $1$, $\ldots$, $N-1$ lie entirely in $\mathcal{G} \subset G$, as do open paths of colour $1$.

Notice that, because all open paths necessarily have two end-points, the power of $h$ in $\mu_{G, N, \beta, h}(w)$ is always even, hence the results we obtain for $h$ and $-h$ will be identical. With this in mind we will in often take $h>0$.

The central quantity of interest is the  \emph{two-point function}.
For the definition of the two-point function we will allow  only one walk of colour $1$.
\begin{defn}\label{def:twopointpaths}
For $A\subset \Vcal$, define $\mathcal{S}(A)$ to be the set of configurations $w \in \mathcal{W}^\prime_{G}$ such that $u_z^1(w) = 1$ for every $z\in A$ and $u_z^1(w) = 0$ for every $z \in \Vcal \setminus A$. 
 We define 
  $Z_{G,N, \beta, h}(A)=\mu_{G,N, \beta, h}(  \mathcal{S}(A))$
  and 
$
Z^{\ell}_{G,N,\beta, h}=\mu_{G,N, \beta, h} \big ( \mathcal{S}(\emptyset) \big ).
$
 Finally, we define the \emph{point-to-point correlation functions} by, 
\begin{align*}
\mathbb{G}_{G,N, \beta, h}(A)   : = \frac{Z_{G,N, \beta, h}(A)}{Z^{\ell}_{G,N,\beta, h}}. 
\end{align*}
We call the cases where $|A|=2$ \emph{two-point functions}. 
When $A=\{x,y\}$ for $x\neq y$ we write $Z_{G,N, \beta, h}(x,y)$ and $\mathbb{G}_{G, N, \beta}(x,y)$ 
for $Z_{G,N, \beta, h}(A)$ and $\mathbb{G}_{G,N, \beta}(A)$ respectively.
We also write $\mathcal{S}_G$ for  $\mathcal{S}(\emptyset)$.
\end{defn}

\subsection{Equivalence of two-point functions and the `colour-switch' lemma}

The next proposition connects the correlation function which was defined above to the spin correlations of the Spin $O(N)$ model.

\begin{prop}\label{prop:looprep}
Let $\Gcal=(\Vcal,\Ecal)$ be an undirected, finite, simple graph and let $G=(V,E)$ be obtained from $\Gcal$ by adding a ghost vertex $g$ as described above. Let $N\in\N_{>0}$, $\beta\geq 0$ and $h\in\R$. 
We have that,
\begin{equation}
\mathbb{G}_{G,N, \beta, h}(A) =\bigg\langle \prod_{x\in A}\varphi^1_x\bigg\rangle^{spin}_{\Gcal,N,\beta,h}.
\end{equation}
\end{prop}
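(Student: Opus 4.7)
My plan is to prove the identity by Taylor-expanding the Boltzmann weight, integrating out the spins using the moment formula on the sphere, and identifying the resulting sum as the RPM partition function restricted to $\mathcal{S}(A)$. First, I would factor
$$e^{-H^{spin}_{\Gcal,N,\beta,h}(\varphi)} \;=\; \prod_{\{x,y\}\in \Ecal} e^{\beta\, \varphi_x\cdot\varphi_y}\; \prod_{x\in\Vcal} e^{h\,\varphi_x^N},$$
expand each exponential as a power series, and apply the multinomial theorem to $(\varphi_x\cdot\varphi_y)^{m_e} = \bigl(\sum_{i=1}^N \varphi_x^i\varphi_y^i\bigr)^{m_e}$. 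This introduces exactly the link cardinality $m_e$ and the colouring $c_e:\{1,\dots,m_e\}\to[N]$ of the RPM on each original edge; the $m_{\{x,g\}}$ copies of $\varphi_x^N$ at each vertex play the role of $N$-coloured links on the ghost edges. After this expansion the integrand is a monomial $\prod_{x\in\Vcal}\prod_{i=1}^N(\varphi_x^i)^{a_{x,i}}$ with $a_{x,i} = \sum_{y:\{x,y\}\in E} m^i_{\{x,y\}}+\1[x\in A,\,i=1]$, dressed with the obvious $\beta$, $h$, and factorial weights.

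The second step is to perform the integral over each $\varphi_x$ against the uniform measure on $\mathbb{S}^{N-1}$. Writing $\varphi = Y/|Y|$ with $Y \sim \mathcal{N}(0,\Id)$ and combining Wick's theorem with the chi-square moment $\E[|Y|^{2r}]=2^r\Gamma(N/2+r)/\Gamma(N/2)$, one obtains
$$\int_{\mathbb{S}^{N-1}}\prod_{i=1}^N (\varphi_x^i)^{a_{x,i}}\,d\varphi_x \;=\; \mathcal{U}(r_x)\,\prod_{i=1}^N (a_{x,i}-1)!!, \qquad r_x=\tfrac12\sum_i a_{x,i},$$
whenever every $a_{x,i}$ is even, and zero otherwise. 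The factor $\mathcal{U}(r_x)$ is exactly the weight $U_x(w)=\mathcal{U}(n_x(w))$ appearing in $\mu_{G,N,\beta,h}$: for any wire configuration with the parities fixed by $\mathcal{S}(A)$, the identity $n_x^i(w) = \tfrac12 (k^i_x + u_x^i) = \tfrac12 a_{x,i}$ gives $n_x(w) = r_x$.

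It remains to recognise the combinatorial factor $\prod_i (a_{x,i}-1)!!$ as the number of admissible pairings at $x$. Using the standard interpretation of $(a-1)!!$ as the number of perfect pairings of $a$ objects and viewing the insertion $\varphi_x^1$ (present only when $x\in A$) as an auxiliary half-edge to be paired away, one gets a bijection: perfect pairings of the $a_{x,i}$ objects correspond exactly to pairings of the $k^i_x$ real colour-$i$ links at $x$ leaving precisely $u_x^i = \1[x\in A,\,i=1]$ of them unpaired — which is the content of the constraints defining $\mathcal{W}'_G\cap \mathcal{S}(A)$. At the ghost vertex $g$ no spin integral is performed; the constraints $v^N_g=0$ and $n^i_g=0$ for $i<N$ force all ghost links to be $N$-coloured and unpaired, contributing a unique pairing. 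Multiplying these local pieces over $x\in\Vcal$ and summing over $(m,c)$ then reproduces $\sum_{w\in\mathcal{S}(A)}\mu_{G,N,\beta,h}(w) = Z_{G,N,\beta,h}(A)$; the case $A=\emptyset$ gives the denominator $Z^\ell_{G,N,\beta,h}$, and dividing yields the claim.

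The one genuine difficulty is the bookkeeping that matches the parity constraints built into $\mathcal{W}'_G$ (only colour-$1$ unpaired ends at original vertices, only colour-$N$ at the ghost) to the parity conditions enforced by the vanishing of odd sphere moments, and the careful combinatorial identification of $(a-1)!!$ with the number of partitions into unpaired and paired half-edges in the presence of the $\varphi_x^1$ insertion. Once this dictionary is set up, the proposition reduces to the moment formula and an elementary pairing count.
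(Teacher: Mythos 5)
Your proposal is correct and follows essentially the same route as the paper: expand the Boltzmann weight, apply the multinomial theorem to introduce link cardinalities and colourings, integrate out each $\varphi_x$ via the spherical moment formula, and reinterpret the double factorials as counts of pairings (with the extra $\varphi_x^1$ insertion at $x\in A$ accounting for the single unpaired $1$-link). The only cosmetic difference is that you derive the moment formula from the Gaussian/chi-square decomposition rather than citing it directly, which is a fine alternative.
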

\begin{proof}
The proof is very similar to \cite[Proposition 2.3]{L-T}. To begin, for $A\subset \Vcal$ we define 
\begin{equation}
Z^{spin}_{\Gcal,N,\beta,h}(A):=Z^{spin}_{\Gcal,N,\beta,h}\bigg\langle \prod_{x\in A}\varphi^1_x\bigg\rangle^{spin}_{\Gcal,N,\beta,h}=\int_{\Omega_{\Gcal,N}}\mathrm{d}\varphi \left(\prod_{x\in A} \varphi^1_x\right)e^{-H^{spin}_{\Gcal,N,\beta,h}(\varphi)}.
\end{equation}
Now we expand the exponential term, we will define $\varphi_g:=(0,\dots,0,1)$ in order to have a consistent notation and cleaner expressions in the expansion. The reader should understand the the `spin' at $g$ is fixed to $(0,\dots,0,1)$. For convenience we will define a coupling parameter that incorporates $\beta$ and $h$. For $\{x,y\}\in E$ and $i\in[N]$
\begin{equation}
J^i_{\{x,y\}}=\begin{cases} \beta & \text{ if }\{x,y\}\in\Ecal, \\ h & \text{ if } g\in\{x,y\} \text{ and }i=N, \\ 0 & \text{ otherwise.} \end{cases}
\end{equation}

This will enable us to write our expansion in terms of a single variable, $J^i_e$, instead of having to constantly differentiate between different cases. To begin we write the exponential term as,
\begin{equation}\label{eq:expexpand}
\exp\bigg\{\sum_{\{x,y\}\in E}\sum_{i=1}^NJ^i_{\{x,y\}}\varphi^i_x\varphi^i_y\bigg\}=\prod_{\{x,y\}\in E}\prod_{i=1}^Ne^{J^i_{\{x,y\}} \varphi^i_x\varphi^i_y}.
\end{equation}
and expand 
\begin{equation}
e^{J^i_{\{x,y\}}\varphi^i_x\varphi^i_y}=\sum_{m^i_{\{x,y\}}\geq 0}\frac{(J^i_{\{x,y\}})^{m^i_{\{x,y\}}}}{m^i_{\{x,y\}}!}(\varphi^i_x\varphi^i_y)^{m^i_{\{x,y\}}}.
\end{equation}
 For $B\subset \Vcal$ we define sets
\begin{align}
\widetilde{\Mcal}_{G}(B)=&\bigg\{m\in\Mcal_{G}\, : \, \forall x\in B\, \sum_{e\in E:x\in e}m_e\in 2\N+1,\, \forall x\in V\setminus B\, \sum_{e\in E:x\in e}m_e\in2\N \bigg\}
\\
\Mcal_{\Gcal}(B)=&\widetilde{\Mcal}_{G}(B)\cap \bigg\{m\in\Mcal_{G}\, : \, \sum_{x\in \Vcal}m_{\{x,g\}}=0\bigg\}.
\end{align}
We also define $q^i_x(m)=\sum_{e\ni x}m^i_e$ and $q_x(m)=\sum_{i=1}^N q^i_x(m)$. We have
\begin{equation}
\begin{aligned}
Z^{spin}_{\Gcal,N,\beta,h}&(A)=\sum_{m^1\in \Mcal_G(A)}\sum_{m^2,\dots,m^{N-1}\in\Mcal_G(\emptyset)}\sum_{m^N\in\widetilde{\Mcal}_G(\emptyset)}\left[\prod_{e\in E}\left(\prod_{i=1}^N\frac{(J^i_e)^{m^i_e}}{m^i_e!}\right)\right]
\\
&\int_{\Omega_{\Gcal,N}}\mathrm{d}\varphi\left(\prod_{x\in A}(\varphi^i_x)^{q^i_x+1}(\varphi^2_x)^{q^2_x}\dots (\varphi^N_x)^{q^N_x}\right)\left(\prod_{x\in V\setminus A}(\varphi^i_x)^{q^i_x}(\varphi^2_x)^{q^2_x}\dots (\varphi^N_x)^{q^N_x}\right).
\end{aligned}
\end{equation}

Now we use the following identity from \cite[Appendix A]{Chayes}
\begin{equation}\label{eq:spinintegral}
\int_{\mathbb{S}^{N-1}}(\varphi^1)^{n_1}\dots (\varphi^N)^{n_N}\mathrm{d}\varphi=\begin{cases}
\frac{\Gamma\big(\tfrac{N}{2}\big)\prod_{i=1}^N (n_i-1)!!}{2^{\frac{n}{2}}\Gamma\big(\tfrac{n+N}{2}\big)}& \text{if } n_i\in2\N\text{ for }i\in[N], \\
 0 &\text{otherwise},
 \end{cases}
\end{equation}
with $n = \sum_{i=1}^N n_i$.
Additionally, we sum over uncoloured link configurations and over ways to distribute the colours of these configurations to obtain
\begin{equation}
\begin{aligned}
&Z^{spin}_{\Gcal,N,\beta,h}(A)=\sum_{m\in \widetilde\Mcal_G(A)}\left(\prod_{e\in E}\frac{1}{m_e!}\right)\sum_{\substack{m^1\in\Mcal_G(A),m^N\in \widetilde{\Mcal}_G(\emptyset)\\ m^2,\dots,m^N\in \Mcal_G(\emptyset)\\\sum_{i=1}^Nm^i=m}}\left(\prod_{e\in E}\frac{m_e!}{m^1_e!\dots m^N_e!}\prod_{i=1}^N(J^i_e)^{m^i_e}\right)
\\
&\left(\prod_{x\in A}\frac{\Gamma\big(\tfrac{N}{2}\big)}{2^{(q_x+1)/2}\Gamma\big(\tfrac{q_x+1+N}{2}\big)} q^1_x!!\prod_{i=2}^N(q^i_x-1)!!\right)
\left(\prod_{x\in V\setminus A}\frac{\Gamma\big(\tfrac{N}{2}\big)}{2^{q_x/2}\Gamma\big(\tfrac{q_x+N}{2}\big)}\prod_{i=1}^N(q^i_x-1)!!\right).
\end{aligned}
\end{equation}

Now if $q\in 2\N$ then $(q-1)!!$ is the number of pairings of $q$ objects, whereas if $q\in2\N+1$ then $q!!$ is the number of pairings of $q$ objects that leaves one object on its own (i.e. there are $(q-1)/2$ tuples and one single object). 

For $m\in\widetilde\Mcal_{G}(A)$ let $\Ccal_{G}(m,A)$ be the set of colourings such that for every $x\in V$ the number of 1-links incident to $x$ is odd if $x\in A$ and is even otherwise and all links incident to $g$ are $N$-links. For $i\in\{2,\dots,N\}$ there are an even number of $i$-links incident to $x$ for every $x\in\Vcal$.

Further, for $m\in\widetilde\Mcal_{\Gcal}(A)$ and $c\in\Ccal_{\Gcal}(m,A)$ let $\Pcal_{\Gcal}(m,c, A)$ be the set of pairings such that there is precisely one unpaired 1-link at each $x\in A$ (and no other unpaired 1-links), additionally for $i\in\{2,\dots,N\}$ and every $x\in \Vcal$ each $i$-link incident to $x$ is paired at $x$ and no $N$-links incident to $g$ are paired. Given such a triple $(m,c,\pi)$ and $x\in V$ let $n_x(m,c,\pi)=\sum_{i=1}^Nn^i_x(m,c,\pi)$ be the local time  at the vertex $x$, we have
\begin{equation}
\begin{aligned}
&Z^{spin}_{\Gcal,N,\beta,h}(A) = \sum_{m\in \widetilde\Mcal_G(A)}\sum_{c\in \Ccal_{G}(m,A)}\left(\prod_{e\in E}\frac{1}{m_e!}\prod_{i=1}^N(J^i_e)^{m^i_e}\right)
\\
&\sum_{\pi\in \Pcal_G(m,c, A)}\left(\prod_{x\in A}\frac{\Gamma\big(\tfrac{N}{2}\big)}{2^{n_x(m,c,\pi)}\Gamma\big(n_x(m,c,\pi)+\tfrac{N}{2}\big)}\right)\left(\prod_{x\in V\setminus A}\frac{\Gamma\big(\tfrac{N}{2}\big)}{2^{n_x(m,c,\pi)}\Gamma\big(n_x(m,c,\pi)+\tfrac{N}{2}\big)}\right).
\end{aligned}
\end{equation}
 We used that, if $w=(m,c,\pi)\in\Wcal_{G}$ and $q_x$ links are incident to $x$ then if $x\in A$, $q_x+1=2n_x(w)$. Similarly if $x\in\Vcal\setminus A$ then $q_x=2n_x(w)$.

Now if we define $U_x$ as in Definition \ref{def:measure},
we recall the definition of $\mathcal{W}_G^\prime$
and  perform the same expansion for $Z^{spin}_{\Gcal,N,\beta,h}=Z^{spin}_{\Gcal,N,\beta,h}(\emptyset)$ we have the result.
\end{proof}

The previous proposition connects the spin-spin correlation of the Spin $O(N)$ model to the correlation functions of the random path model.  The starting point of our analysis is Lemma \ref{lem:colourswitchlem} below,
which connects the two-point correlation function to the expected number of $N$-walks with extremal links (defined below Definition \ref{def:loopwalkmeasure}) on $\{x,g\}$ and $\{y,g\}$
in a random path configuration with loops of any colour and $N$-walks with both end points at the ghost vertex (and no walks of colour 1).
The next definition introduces the probability measure and expectation which describes such a random path model.
\begin{defn}\label{def:loopwalkmeasure} 
We define the probability measure on $\mathcal{S}_G \subset \mathcal{W}^\prime_G,$
\begin{equation}\label{eq:probmeasure}
\forall w  \in \mathcal{S}_G  \quad  \quad \mathbb{P}_{G, N, \beta, h} \big  (  w  ) \, : = \, \frac{1}{  Z^{\ell}_{G,N,\beta, h }} \, \, 
\mu_{G,N, \beta, h} ( w ),
\end{equation}
 and we denote by $\mathbb{E}_{G, N, \beta, h}$ the expectation with respect to $\mathbb{P}_{G, N, \beta, h}$. 
\end{defn}
We now introduce the definition of an extremal link. A link is called \textit{extremal}
if at least one of its end-points is unpaired. Given a walk consisting of least two links, we call its two links which have an unpaired end-point the \textit{extremal links of the walk}. 
If a walk has its two extremal links on the edges $e_1, e_2 \in E$, we write that it is $(e_1, e_2)$ - extremal.
Notice that $N$-walks  have both end points at the ghost vertex $\mathbb{P}_{G, N, \beta, h}$ - almost surely, hence they  have at least two links.

The next lemma is key for our approach. 

\begin{lem}[\textbf{Colour-Switch lemma}]\label{lem:colourswitchlem}
Let $N\in\N_{\geq 2}$, $\beta\geq 0$ and $h \in \R\setminus\{0\}$.
 Choose an arbitrary pair of distinct vertices $x, y \in \mathcal{V}$.
We let $M_{x,y}$ be the number of $N$-walks which have one extremal link on  $\{x,g\}$ and the other extremal link on $\{y,g\}$. Then,
\begin{equation}\label{eq:colourswitch}
\mathbb{G}_{G,N, \beta, h}(x,y) =\frac{1}{h^2} 
\mathbb{E}_{G,N, \beta, h} \Big ( 
M_{x,y}\Big).
\end{equation}
\end{lem}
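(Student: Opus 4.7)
My plan is to establish a weight-preserving bijection between $\mathcal{S}(\{x,y\})$, equipped with two label choices for the insertion of new ghost links, and the set
$$\mathcal{A}:=\{(w',W):w'\in\mathcal{S}_G,\ W \text{ is an } N\text{-walk of }w' \text{ with extremal links on }\{x,g\}\text{ and }\{y,g\}\}.$$
By the definitions of $\mathbb{G}_{G,N,\beta,h}$ and $\mathbb{E}_{G,N,\beta,h}$, the claim reduces to proving $h^2\mu_{G,N,\beta,h}(\mathcal{S}(\{x,y\}))=\sum_{(w',W)\in\mathcal{A}}\mu_{G,N,\beta,h}(w')$, since the right-hand side counts each configuration $w'\in\mathcal{S}_G$ with multiplicity $M_{x,y}(w')$.

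For any $w\in\mathcal{S}(\{x,y\})$, the constraints $u^1_x(w)=u^1_y(w)=1$, $u^1_z(w)=0$ for $z\in\mathcal{V}\setminus\{x,y\}$, and $n^1_g(w)=0$ force the $1$-links of $w$ to decompose into $1$-loops in $\mathcal{G}$ together with a single $1$-walk $W_1$ joining $x$ to $y$. I define the colour-switch map $\Phi$ by: (i)~recolouring every link of $W_1$ from $1$ to $N$; (ii)~inserting a new $N$-link on $\{x,g\}$ with an arbitrary label $j_x\in\{1,\ldots,m_{\{x,g\}}(w)+1\}$ and pairing it at $x$ with the former unpaired endpoint of $W_1$; (iii)~doing the analogous operation at $y$ with label $j_y$. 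The output $(w',W)$ satisfies $w'\in\mathcal{S}_G$ (the two new pairings live at $x,y$, so $v^N_g(w')=0$ is preserved) and is thus in $\mathcal{A}$. The inverse $\Psi$ removes the two extremal links of $W$, shifts labels on the affected ghost edges, and recolours the remaining $W$-links back to $1$. The crucial compatibility point is that $W$ always has length $\geq 3$: because $\mathcal{W}^\prime_G$ forbids $N$-pairings at $g$, any $N$-walk in $w'$ with one extremal link on $\{x,g\}$ and the other on $\{y,g\}$ must turn around through an original vertex, ruling out the otherwise degenerate length-$2$ case $g$-$x$-$g$-$y$.

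The final step is weight accounting. The $\beta$-factors and the weights $U_z$ for $z\in\mathcal{V}\setminus\{x,y\}$ are unchanged under $\Phi$ because only colours are permuted there and $\mathcal{U}$ depends solely on the total local time $n_z$. At $x$, the unpaired $1$-endpoint (contributing $1$ to $n_x$) is replaced by the new pairing with the inserted $N$-link (also contributing $1$ to $n_x$), and likewise at $y$; so $U_x,U_y$ are unchanged as well. Hence the only modification comes from the ghost-edge combinatorial factors, giving
\[
\mu_{G,N,\beta,h}(w')\;=\;\mu_{G,N,\beta,h}(w)\cdot\frac{h^{2}}{(m_{\{x,g\}}(w)+1)(m_{\{y,g\}}(w)+1)}.
\]
Summing over the $(m_{\{x,g\}}(w)+1)(m_{\{y,g\}}(w)+1)$ admissible pairs $(j_x,j_y)$ cancels the denominator and produces exactly $h^{2}\mu_{G,N,\beta,h}(w)$; summing then over $w\in\mathcal{S}(\{x,y\})$ and using bijectivity of $\Phi$ yields the displayed identity, hence the lemma. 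The main obstacle I expect is the bookkeeping around labelled links: one must check that the $(m_{\{x,g\}}(w)+1)(m_{\{y,g\}}(w)+1)$ label choices on the $\Phi$-side are in exact bijection with the different ways an abstract $(w',W)\in\mathcal{A}$ can arise, and carefully verify that the exclusion of length-$2$ walks from $\mathcal{A}$ is precisely the constraint $v^N_g=0$ built into $\mathcal{W}^\prime_G$.
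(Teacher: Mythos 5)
Your proof is correct, and the colour-switch mechanism at its heart is the same as the paper's: recolour the unique $1$-walk from $x$ to $y$ to colour $N$, insert one new $N$-link on each of $\{x,g\}$ and $\{y,g\}$ paired to the formerly unpaired endpoints, note that each local time $n_z$ (hence each weight $U_z$) is unchanged, and read off that the only change in $\mu_{G,N,\beta,h}$ is the factor $h^2$ divided by the label-combinatorics on the two ghost edges. The organisational difference is that you set up a direct weight-preserving bijection between $\mathcal{S}(\{x,y\})$ decorated by the two insertion labels and the set $\mathcal{A}$ of marked pairs $(w',W)$, so that each $w'\in\mathcal{S}_G$ is counted with multiplicity $M_{x,y}(w')$ and the label sum cancels the denominator. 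The paper instead introduces the intermediate set $R:=F_1(\mathcal{S}(\{x,y\}))=F_N(\mathcal{S}^N(\{x,y\}))$, where $F_1,F_N$ strip the extremal links of \emph{all} $(\{x,g\},\{y,g\})$-extremal walks simultaneously, and then compares the cardinalities $|F_1^{-1}(w')|,|F_N^{-1}(w')|$ (involving binomials and factorials in $M^r_{x,y}(w')$) together with the weights of the elements of each fibre. Your one-walk-at-a-time formulation avoids the intermediate set and the multi-walk factorial bookkeeping, giving a somewhat leaner packaging of the same combinatorial identity. The point you flag at the end is exactly the right place to look for trouble: since $v^N_g=0$ in $\mathcal{W}'_G$, a $(\{x,g\},\{y,g\})$-extremal $N$-walk cannot consist of only the two ghost links paired at $g$, so the walk you strip and recolour is always nonempty and the inverse map $\Psi$ lands in $\mathcal{S}(\{x,y\})$ as required.
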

\begin{proof}
\begin{figure}
\includegraphics[scale=0.38]{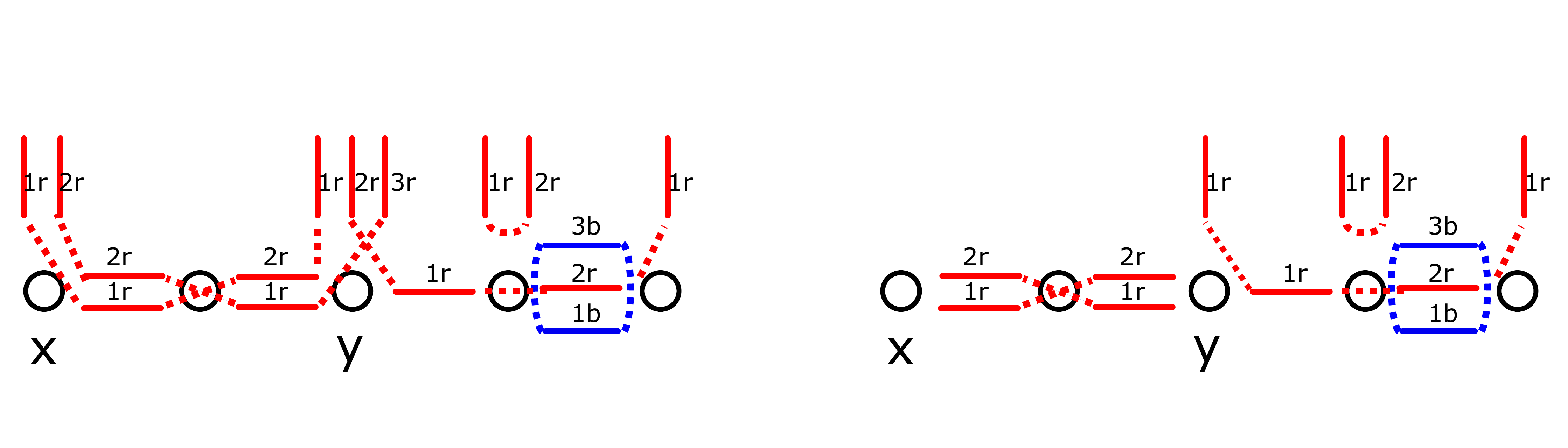}
\centering
\caption{
We suppose that $N = 2$, we represent the colour $1$ by blue and the colour $N=2$ by red, we assume that $\mathcal{G}$ is a connected subset of $\mathbb{Z}$ with five vertices and edges connecting nearest neighbour vertices and the vertical links are on edges connecting the vertices of $\Gcal$  to $g$, the ghost vertex is not represented in the figure. 
{\textit{Left:}} A configuration $w \in \mathcal{S}^N( \{x,y\}  )$
such that $M_{x,y}(w) = 2$ (see the definitions in the proof of Lemma \ref{lem:colourswitchlem}). 
\textit{Right:} A configuration $F_N(w)$, which is obtained from $w$ by removing the extremal links of the two
$(\{x,g\},\{y,g\})$ - extremal  walks  and by leaving unpaired the links to which such external links were paired.
}
\label{Fig:mapexample}
\end{figure}
We let  $\Scal^N(\{x,y\})\subset \Scal_G$ be the set of configurations in $\Scal_G$ with no $1$-walks and with at least one $N$-walk whose extremal links are on the edges $\{x,g\}$ and $\{y,g\}$, recall that $\Scal(\{x,y\})$ is the set of configurations with a unique $1$-walk having $x$ and $y$ as end-points. The proof consists of
partitioning the sets $\Scal^N(\{x,y\})$ and  $\Scal(\{x,y\})$, identifying a bijection between
the elements of the partition in $\Scal^N(\{x,y\})$ and  those in $\Scal(\{x,y\})$, and
comparing the  weights of such elements.

To begin, we define a map $F_N : \Scal^N(\{x,y\}) \mapsto \mathcal{W}_G$ which acts by removing  the extremal links 
of  the $(\{x,g\},\{y,g\})$ - extremal walks 
and by leaving unpaired the links to which these extremal links were paired,  as in the example in Figure \ref{Fig:mapexample}.

Further, we define a map $F_1 : \Scal(\{x,y\}) \mapsto \mathcal{W}_G$ which acts by first applying $F_N$ to $w \in \Scal(\{x,y\})$ (in the analogous way as on $ \Scal^N(\{x,y\})$)
and then by changing to $N$ the colour of all the links belonging to the unique $1$-walk with end-points $x$ and $y$.
We note that
\begin{equation}\label{eq:imageset}
R: = F_1 \big (   \Scal(\{x,y\}) \big ) = F_N\big (  \Scal^N(\{x,y\}) \big ) \subset \mathcal{W}_G
\end{equation}
The configurations in $R$ have at least one $N$-walk having $x$ and $y$ as end-points. 
For any $w \in R$, we let $M^r_{x,y}(w)$ be the number of $N$-walks with end-points $x$ and $y$ in $w$.
We note that, for any $w, w^\prime \in R$ such that $w \neq w^\prime$, 
\begin{equation}\label{eq:disjointeness}
F_1^{-1}(w) \cap F_1^{-1}(w^\prime) = \emptyset \quad \quad F_N^{-1}(w) \cap F_N^{-1}(w^\prime) = \emptyset.
\end{equation}
Here, for any $w \in R$,  $F_N^{-1}(w)$ corresponds to the set of configurations which are obtained from $w$ by inserting $M^r_{x,y}(w)$  $N$-links
on $\{x,g\}$ and $\{y,g\}$ and by pairing them at $x$ and $y$ to the 
links of the  $M^r_{x,y}(w)$  $N$-walks
with end-points $x$ and $y$ in some arbitrary manner.
Similarly,  $F_1^{-1}(w)$ corresponds to the set of configurations which are obtained from $w$ by choosing one of the $N$-walks with end-points $x$ and $y$, turning the colour of all its links to $1$, inserting $M^r_{x,y}(w)-1$  $N$-links on $\{x,g\}$ and $\{y,g\}$ and pairing them at $x$ and $y$ to the links of the  $M^r_{ x,y }(w)-1$ remaining $N$-walks with end-points $x$ and $y$ in some arbitrary manner. It follows that, if $w \neq w^\prime \in R$, then each configuration in  $F_1^{-1}(w)$ differs from each configuration in $F_1^{-1}(w^\prime)$ and the same holds for $F_N^{-1}$, giving (\ref{eq:disjointeness}).

We note that, for any $w^\prime = (m^\prime, c^\prime, \pi^\prime    ) \in R$, we have that,
\begin{multline}\label{eq:relation1}
\big |  F^{-1}_N(  w^\prime   )     \big |  =    \binom{M^r_{x,y}(w^\prime) + m^\prime_{\{x,g\}}}{M^r_{x,y}(w^\prime)  } \,   \binom{M^r_{x,y}(w^\prime) + m^\prime_{\{y,g\}}}{M^r_{x,y}(w^\prime)  } \,  (M^r_{x,y}(w^\prime)!)^2 \\ =  \frac{ (M^r_{x,y}(w^\prime) + m^\prime_{\{x,g\}} )!}{ m^\prime_{\{x,g\}}!   }    \,   \frac{ (M^r_{x,y}(w^\prime) + m^\prime_{\{y,g\}})! }{ m^\prime_{\{y,g\}}!  }   
\end{multline}
where the first two factors in the right-hand side of the first identity correspond to the 
number of ways $M^r_{x,y}(w^\prime)$ $N$-links can be inserted on
$\{x,g\}$ and on  $\{y,g\}$ among the ones already present in  $w^\prime$ and the  third factor  corresponds to the number of ways such new links can be paired at $x$ and $y$ 
to the $M^r_{x,y}(w^\prime)$ links of the $N$-walks of $w^\prime$ with end-points $x$ and $y$.
Similarly, we obtain that, 
\begin{multline}\label{eq:relation2}
\big |  F^{-1}_1(  w^\prime)       \big | = M^r_{x,y}(w^\prime) \, 
\binom{M^r_{x,y}(w^\prime) -1 + m^\prime_{\{x,g\}}}{M^r_{x,y}(w^\prime) -1  } \,   \binom{M^r_{x,y}(w^\prime) -1 + m^\prime_{\{y,g\}}}{M^r_{x,y}(w^\prime) -1 } \,  ((M^r_{x,y}(w^\prime)-1)!)^2\\ 
 =  M^r_{x,y}(w^\prime)  \, \frac{ (M^r_{x,y}(w^\prime)-1 + m^\prime_{\{x,g\}} )!}{ m^\prime_{\{x,g\}}!   }    \,   \frac{ (M^r_{x,y}(w^\prime) -1 + m^\prime_{\{y,g\}})! }{ m^\prime_{\{y,g\}}!  } , 
\end{multline}
where the first factor in the right-hand side of the first identity 
corresponds to the number of ways for choosing which of the $M^r_{x,y}(w^\prime)$ $N$-walks is turned into an $1$-walk and the remaining weights are analogous to those in the previous display.

We now note that, from Definition \ref{def:measure}, for any
$w^\prime = (m^\prime, c^\prime, \pi^\prime) \in R$ and any
 $w = (m, c, \pi) \in F^{-1}_N(w^\prime)$, we have that,
\begin{equation}\label{eq:relation3}
\mu_{G, N, \beta, h }(w) =\Big  (\prod_{e \in \Ecal} \frac{\beta^{m^\prime_e}}{m^\prime_e!} \Big ) \, 
\Big  (\prod_{z \in \Vcal \setminus \{x,y\} } \frac{h^{m^\prime_{ \{z,g\} }}}{m^\prime_{\{z,g\}}!} \Big ) \,
 \Big ( \prod_{z \in \Vcal} U_z(w^\prime) \Big )\,
\Big  (\prod_{z \in  \{x,y\} } \frac{h^{M_{x,y}(w^\prime) +m^\prime_{ \{z,g\}}} }{( M_{x,y}(w^\prime) + m^\prime_{\{z,g\}} )!} \Big )  
\end{equation}
and, similarly, that 
for any
$w^\prime = (m^\prime, c^\prime, \pi^\prime) \in R$ and any
 $w = (m, c, \pi) \in F^{-1}_1(w^\prime)$ we have that,
\begin{equation}\label{eq:relation4}
\mu_{G, N, \beta, h }(w) =\Big  (\prod_{e \in \Ecal} \frac{\beta^{m^\prime_e}}{m^\prime_e!} \Big ) \, 
\Big  (\prod_{z \in \Vcal \setminus \{x,y\} } \frac{h^{m^\prime_{ \{z,g\} }}}{m^\prime_{\{z,g\}}!} \Big )  \,
 \Big ( \prod_{z \in \Vcal} U_z(w^\prime) \Big ) \, 
 \Big  (\prod_{z \in  \{x,y\} } \frac{h^{M^r_{x,y}(w^\prime) -1 + m^\prime_{ \{z,g\}}}}{(M^r_{x,y}(w^\prime) -1+ m^\prime_{\{z,g\}} )!} \Big ).
\end{equation}
Note that the two previous  displays differ from each other only in the last factor. Thus, combining  the last four displays we deduce that,
for any $w^\prime \in R$,
\begin{equation}\label{eq:relation5}
\mu_{G, N, \beta, h }
\Big  (  F^{-1}_1(w^\prime)   \Big ) =
\frac{1}{h^2} \,
M^r( w^\prime  ) \,  \mu_{G, N, \beta, h } \,
\Big  (  F^{-1}_N(w^\prime)   \Big ).
\end{equation}
The previous identity can be deduced from 
(\ref{eq:relation1}),  (\ref{eq:relation2}),
(\ref{eq:relation3}), (\ref{eq:relation4})
since (\ref{eq:relation1}) and  (\ref{eq:relation2})
give the cardinalities of the sets  in the left- and right-hand side of (\ref{eq:relation5})  respectively
and   (\ref{eq:relation3}), (\ref{eq:relation4})
 give the weight of each  element of the sets.
 Thus, we obtain that,
\begin{align*}
&  Z_{G, N, \beta, h}(x,y)
  =
 \sum_{w\in \Scal(\{x,y\}) }\mu_{G,N,\beta, h}(w) 
 =
\sum_{w^\prime\in R }\mu_{G,N,\beta, h} \big ( F^{-1}_1(w^\prime) \big ) 
\\ 
& =
\frac{1}{h^2} 
\sum_{w^\prime\in R }\mu_{G,N,\beta, h} \big ( F^{-1}_N (w^\prime) \big )   M^r_{x,y}(w^\prime)
  =
\frac{1}{h^2} \, \sum_{w\in \Scal^N(\{x,y\}) }\mu_{G,N,\beta, h}  ( w ) \, M_{x,y}(w)
  \\ &  =  
\frac{1}{h^2} \, \sum_{w\in \Scal_G }\mu_{G,N,\beta, h}  ( w ) \, M_{x,y}(w)
=
Z^\ell_{G,N,\beta, h} \, \frac{1}{h^2}  \, \mathbb{E}_{G,N, \beta, h} 
\big ( M_{x,y} \big),
\end{align*}
where 
for the second identity we used  (\ref{eq:disjointeness}),
for the third identity we used
(\ref{eq:relation5}),
for the fourth identity we used again  (\ref{eq:disjointeness})
and the fact that, for any $w^\prime \in R$
and $w \in F^{-1}_N(w^\prime)$,
$M_{x,y}^r(w^\prime) = M_{x,y}(w)$,
for the last identity we used Definitions
 \ref{def:twopointpaths}  and \ref{def:loopwalkmeasure}.
 From the last expression and Definition   \ref{def:twopointpaths}
 we deduce (\ref{eq:colourswitch}) and conclude the proof.
\end{proof}

\section{A bound on local times}\label{sec:linkbounds}
The next lemma provides an upper bound for the joint distributions of local times of vertices when the maximum degree of $\Gcal$, $d^*_{\Gcal}$, is finite. 
Since the measure in Definition \ref{def:measure} is invariant with respect to a sign inversion of the external magnetic field (the number of unpaired end-points of $N$-links can only be even),  we can, without loss of generality, take $h \geq 0$.

\textbf{Further notation.} Recall that for an arbitrary finite directed graph $\mathcal{G} = (\mathcal{V}, \mathcal{E})$ we denote by $G = (V,E)$ the graph obtained from $\mathcal{G}$ by adding a ghost vertex, as described above Definition \ref{def:measure}. 
Given a set of original vertices $A \subset  \mathcal{V}$, we introduce the following notation for the complement and for the edge and vertex boundaries of $A$.  We define $A^c : =  \mathcal{V} \setminus A$, note that this set only contains original vertices. We let $E_{A}$ be the set of edges in $E$ which have at least one end-point in  $A$ (these can be original or ghost), we let $\partial E_A$ be the set of edges in $E$ which have one end-point in $A$ and the other end-point in $A^c$, and we let $E^g_A$ be the set of edges which connect a vertex in $A$ to the ghost vertex. We denote the external boundary of $A$ by $\partial^e A$ - the set of vertices in $x \in A^c$ which have a neighbour in $A$ - and the internal boundary by $\partial^i A$ - the set of vertices in $A$ which have a neighbour in $A^c$. 
By definition, $\partial^i A, \partial^e A \subset \mathcal{V}$.

\hypertarget{c1}{
\begin{lem}\label{lemma:linkcardinalityset}
Let $N\in\N_{>0}$, $\beta, h  \geq 0$. For any $k\in \N$ there exists $c_1 = c_1(d^*_\Gcal, k, N, \beta, h) \in (0, \infty)$ satisfying $\lim_{k \rightarrow \infty} c_1 = 0$  such that,
for any set $A \subset \mathcal{V}$ and $z\in A$,
\begin{align}
\label{eq:Anbound}
\P_{G,N,\beta,h}(\forall\, x\in A ,\, n_x \geq k) & \leq  c_1^{|A|}, \\
\label{eq:Bnbound}
\E_{G,N,\beta,h}\big(m_{\{z,g\}}\1_{\{\forall x\in A \, n_x \geq k\}}\big)  & \leq h\, c_1^{|A|}.
\end{align}
\end{lem}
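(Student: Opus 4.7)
My plan is to bound the joint tail $\P(\forall x \in A, n_x \geq k)$ by exploiting the structural identity $n_x = \tfrac{1}{2}\bigl(q_x + u^1_x\bigr)$, valid for $x \in \Vcal$, where $q_x := \sum_{e \ni x} m_e$ is the total number of links incident to $x$. This identity follows from the constraints $u^i_x = 0$ for $i \geq 2$ in Definition~\ref{def:measure} combined with $n^i_x = \tfrac{1}{2}\bigl(\sum_{e \ni x} m^i_e + u^i_x\bigr)$. Thus $n_x \geq k$ forces \emph{either} $q_x \geq k$ (a ``link-rich'' regime) \emph{or} $u^1_x \geq k$ (a ``pairing-rich'' regime), and the two can be handled separately.

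First, I will establish a Poisson-type tail bound on the link counts: $\P(q_x \geq k) \leq f_1(k)$ with $f_1(k) \to 0$ super-exponentially in $k$. The key point is that, conditional on the configuration off $E_x$, the weights assign to each $m_e$ with $e \ni x$ a factor $\beta^{m_e}/m_e!$ (or $h^{m_e}/m_e!$ for ghost edges) multiplied by $U_x(n_x) \leq U_x(0) = 1$, a decreasing function of $n_x$ and hence of $m_e$. So each $m_e$ is stochastically dominated by a Poisson$(\beta \vee h)$ variable, and since $q_x$ is a sum of at most $d^*_\Gcal + 1$ such counts, the claim follows. An iterated conditioning along an ordering of $B \subset A$ (using that the domination is uniform in the conditioning) yields the joint version $\P(\forall x \in B,\, q_x \geq k) \leq f_1(k)^{|B|}$.

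Second, I will bound the pairing contribution: conditional on $(m, c)$, the pairings at distinct vertices are independent, coupled only through the link counts. At a vertex $x$, the probability weight of the choice $u^1_x = u$ is proportional to $\binom{\sum_{e\ni x} m^1_e}{u}\bigl(\sum_{e\ni x} m^1_e - u - 1\bigr)!! \, U_x\bigl((q_x + u)/2\bigr)$, and the super-exponential decay $U_x(r) \sim 1/(2^r r!)$ gives $\P(u^1_x \geq k \mid m, c) \leq f_2(k)$ uniformly in $(m, c)$, with $f_2(k) \to 0$. By conditional independence, for any subset $B' \subset A$, $\P(\forall x \in B',\, u^1_x \geq k \mid m, c) \leq f_2(k)^{|B'|}$. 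Summing over the $2^{|A|}$ partitions of $A$ into link-rich vs.\ pairing-rich vertices gives \eqref{eq:Anbound} with $c_1 = 2\bigl(f_1(k) + f_2(k)\bigr)$, which tends to $0$ as $k \to \infty$.

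For \eqref{eq:Bnbound}, I will use the Poisson-type identity $\E(m_{\{z,g\}} f) = h \, \widetilde\E(f)$, coming directly from the structure of the weight $h^{m_{\{z,g\}}}/m_{\{z,g\}}!$ under the shift $m_{\{z,g\}} \mapsto m_{\{z,g\}}+1$; here $\widetilde\E$ denotes the expectation under the modified measure with one additional $N$-link inserted on $\{z,g\}$ (which must then be paired at $z$ to an existing link to maintain the constraint $u^N_z = 0$). This insertion can only increase local times at $z$ and its neighbour, so the event $\{\forall x \in A,\, n_x \geq k\}$ is preserved, and \eqref{eq:Bnbound} reduces to \eqref{eq:Anbound} up to the factor $h$. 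The main obstacle will be making the Poisson domination in Step~1 genuinely uniform in the conditioning (the vertex weights $U_x$ couple neighbouring link counts), and the careful bookkeeping of the inserted link in Step~3 to ensure the resulting configuration lies in $\Wcal'_G$; both require a small exchange-of-sums argument that I expect to be routine given the monotonicity $U_x \leq 1$.
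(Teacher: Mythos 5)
Your Step 1 contains a genuine gap that I don't think is ``routine'' to fill. You argue that, because the weight of a configuration $w=(m,c,\pi)$ carries a factor $\beta^{m_e}/m_e!$ and $U_x(n_x)\leq U_x(0)=1$ is decreasing, each $m_e$ is stochastically dominated by a Poisson variable. But you cannot fix $(c,\pi)$ and vary $m_e$: when $m_e$ increases, the sets $\mathcal{C}_\Gcal(m)$ and $\mathcal{P}_\Gcal(m,c)$ grow, so the relevant quantity is the \emph{marginal} weight of $m$ after summing over colourings and pairings. At a vertex $x$ this marginal weight carries the factor $\mathcal{U}(n_x)\prod_i(2n^i_x-1)!!$, which is exactly the spin moment $\mathcal{X}(\boldsymbol n_x)$ of equation \eqref{eq:ignoresites}, and $\mathcal{X}(\boldsymbol n_x)=O(n_x^{-(N-1)/2})$ --- polynomial decay, not super-exponential. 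The double factorial coming from pairings largely cancels the $1/m_e!$ and the decay of $U_x$, so the conditional tail of $m_e$ is \emph{not} Poisson-like; the claim ``$\P(q_x\geq k)\leq f_1(k)$ with $f_1$ super-exponential'' is false in the stated generality (even before worrying about the coupling between neighbouring $U_x$'s, which you yourself flag). The paper's proof does not attempt such a domination: it splits by the parity vector $\xi$, replaces $\bm$ on $E_A$ by its parity $M_A(\bm)$, and uses the monotonicity \eqref{eq:monotonXi} to factor out exactly one copy of $\mathcal{X}_{sup}(k)/\Kcal$ per vertex of $A$, after which the residual sums over $m_e$ on $E_A$ are bounded by the elementary estimate $\sum_{n\in 2\N+q}u^n/n!\leq u^q e^u$. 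The resulting $c_1$ decays only polynomially in $k$ (through $\mathcal{X}_{sup}$), which is weaker than your claimed rate but is all the lemma asserts.

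Two smaller points. First, your decomposition into ``link-rich'' versus ``pairing-rich'' vertices is vacuous here: $\mathbb{P}_{G,N,\beta,h}$ is supported on $\mathcal{S}_G=\mathcal{S}(\emptyset)$, on which $u^1_x\equiv 0$, so $n_x=\tfrac12 q_x$ always and the second regime never occurs. Second, the Mecke-type shift $m_{\{z,g\}}\mapsto m_{\{z,g\}}+1$ you propose for \eqref{eq:Bnbound} breaks the parity constraint that the total number of $N$-links at $z$ be even (a shift by one changes parity, and your patch of ``pairing the new link at $z$'' cannot restore the partition structure of $\pi_z$). The paper sidesteps this by reusing the same parity decomposition and bounding the single sum $\sum_{m\in 2\N+q} m\,h^m/m!\leq h^{q+1}e^h$, which directly produces the extra factor of $h$.
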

}
\begin{proof}
To begin, we define the measure in $\mathcal{S}_G$, 
$$
\forall w \in \mathcal{S}_G \quad \quad 
\tilde \mu(w) : = \mathbb{P}_{G, N, \beta, h  }( w) \, Z^\ell_{G, N, \beta, h  }.
$$
We define by 
$\Sigma_G$  
the set of elements 
$\xi= (\xi^i_e)_{e \in E, i \in [N]} \in \{0,1\}^{E \times [N] }$
such that, for any $x \in \Vcal$ and $i \in [N]$, we have that
$\sum_{ y \sim x} \xi^i_{\{x,y\}} \in 2 \mathbb{N}$.
We have that,
\begin{equation}\label{eq:splittingmultline}
\tilde  \mu
\big ( \forall x \in A,    n_x  \geq  k    \big ) \\
=  \sum\limits_{ \substack{ \xi \in \Sigma_G  }}
\tilde  \mu \big (   \big\{  \forall x \in A, n_x \geq k\big\} \cap \big\{ \forall i \in [N],  \forall e \in E, 
m^i_e(w) \in  2 \mathbb{N} + \xi^i_e  \big\}     \big ).
\end{equation}
For $(n^1,\dots,n^N)\in \N^N$ we define the quantity,
\begin{align}\label{eq:ignoresites}
\mathcal{X}(  n^1, \ldots,  n^N) &  : = 
\int_{ \mathbb{S}^{N-1}  } \,  d \varphi  \, \, \prod_{i=1}^N \, \, (\varphi^{i})^{2 n^i   }
\, \, = 
\frac{1}{2^{ \sum_{i\in [N]}  n^i }} \,  \frac{ \Gamma(\frac{N}{2})  }{ 
  \Gamma(\sum_{i\in [N]}  n^i + \frac{N}{2})  }  \,   \prod_{i=1}^N 
 \Big  (  (2  n^i - 1 )!!   \Big ),
 \end{align}
 which appears in (\ref{eq:spinintegral}), and has been proved in \cite[Appendix A]{Chayes}. We see from the definition (\ref{eq:ignoresites}) that,
\begin{equation}\label{eq:monotonXi}
\mathcal{X}( n^1, \ldots,  n^{i-1}, n^i +1,  n^{i+1},\ldots , n^N) \leq 
\mathcal{X}( n^1, \ldots,  n^{i-1}, n^i,  n^{i+1},  \ldots , n^N),
\end{equation}
for any $( n^1, \ldots,  n^N)$ and $i \in[N]$. 
 Moreover, we define for any $k \in \mathbb{N}$,
 $
 \mathcal{X}_{sup}(k)  : = \sup \big\{   \mathcal{X}( n^1, \ldots, n^N) \, \, : \, \, 
\sum_{i=1}^N n^i =  k \big\},
 $
which satisfies
\begin{equation}\label{eq:boundgoestozero}
\lim\limits_{   k \rightarrow \infty        }
\mathcal{X}_{sup}(
k) = 0.
\end{equation}
For any $e \in E$, $w \in \mathcal{W}_G$ and $i \in [N]$, 
we let $m^i_e(w)$ be the number of $i$-links on the edge $e$
and we define the vector $m^i(w) = \big (m^i_e(w) \big )_{e \in E}$. Moreover, we define $\tilde \Sigma_G \subset   \mathbb{N}^{E \times [N] }$
as the set of elements $\boldsymbol{m}\in  \mathbb{N}^{E \times [N] }$ such that, for any
$x \in \Vcal$, and any $i \in [N]$, $\sum_{y \sim x} 
m^i_{ \{x,y\} } \in 2 \mathbb{N}$,
and such that for any $e \in E \setminus \Ecal$, and any $i \in [N-1]$, $m_e^i = 0$. 
In the next calculation we use the fact that, for any function 
$f : \mathbb{N}^{E \times [N]  }\mapsto \mathbb{R}$,
\begin{multline}
\sum_{ \substack{w = (m, c, \pi)  \in \mathcal{W}^\prime_G }}
\bigg(\prod_{e\in \Ecal}\frac{\beta^{m_e}}{m_e!}\bigg) 
\bigg(\prod_{x \in \Vcal}\frac{h^{m_{\{x,g\}}}}{m_{\{x,g\}}!}\bigg) 
\bigg ( \prod_{x \in V} U_x(w) \Big ) f\big ( m^1(w),  \ldots, m^N(w)\big )
\\ = 
\, 
\sum\limits_{ \boldsymbol m \in \tilde \Sigma_G} \, \, 
\Big ( \prod_{i=1}^{N} \prod_{e\in \Ecal}
\frac{\beta^{m^i_e}}{m^i_e!} \,  \Big ) 
\Big ( \, \prod_{x \in \Vcal}
\frac{h^{m^N_{\{x,g\}  }}}{m^N_{ \{x,g\}}!}  \, 
\Big ) 
\Big ( \prod_{x \in \Vcal} \Xcal \big (\boldsymbol{n}_x(\boldsymbol{m} ) \big  )
\Big ) \, 
f\big ( m^1,  \ldots, m^N \big ),
\end{multline}
where we used the notation $\boldsymbol{n}_x = (n^1_x, \ldots, n^N_x)$ and $\bm=(m^1,\dots,m^N)$, and we wrote 
$\bn_x(\bm)=(n^1_x(m^1),\dots,n^N_x(m^N))$, where $n^i_x(m^i) : = \frac{1}{2} \, \sum_{y \sim x} m^i_{ \{x,y \} }$. For any set  $A \subset \mathcal{V}$ we now define the operator $M_A : \N^{E\times[N]} \mapsto \N^{E\times [N]}$ as follows,
$$
\forall e \in E \quad 
\forall i \in [N] \quad 
\forall \boldsymbol{m} = (m^1, \ldots, m^N) \in  \N^{E\times[N]} \quad 
\big (
M_A(\boldsymbol{m}) 
\big )^i_e: = 
\begin{cases}
m^i_e \hspace{0.2cm}  mod \hspace{0.2cm} 2  &\mbox{ if $e \in E_A$}, \\
m^i_e   & \mbox{ if  $e \in E \setminus E_A$}.
\end{cases}
$$
We obtain that, for any $\xi \in \Sigma_G$, 
\begin{multline}\label{eq:step1localtimes}
\tilde  \mu
\Big (    \big\{ \forall x \in A, n_x \geq k\big\}\cap\big\{ \forall i \in[N],  \forall e \in E, 
m^i_e \in 2 \mathbb{N} + \xi^i_e   \big\}  \Big )  
\\ 
=
\sum\limits_{ \substack{  \boldsymbol m \in \tilde \Sigma_G : \\ M_\Vcal(\boldsymbol m) = \xi }} \, \, 
\Big ( \prod_{i \in [N]} \prod_{e\in \Ecal}
\frac{\beta^{m^i_e}}{m^i_e!} \,  \Big ) 
\Big ( \, \prod_{ x \in \Vcal}
\frac{h^{m^N_{ \{x,g\}  }}}{m^N_{ \{x,g\}  }!}  \, 
\Big ) \, 
\bigg ( \prod_{x \in \Vcal} \mathcal{X}(\boldsymbol{n_x}(\bm)) \mathbbm{1}_{\big \{ \mbox{$\forall\,x \in A$,  $\sum\limits_{\substack{i \in [N] \\ y \sim x}}  m^i_{\{x,y\}   } > 2  k$}  \big \}} ( \boldsymbol{m} )\bigg ),
\end{multline}
where $\mathbbm{1}_{\mathcal{A}}(\boldsymbol{m})$ equals one if
$m \in \mathcal{A}$ and $0$ otherwise.
We now define the constant
$
\Kcal : =  \inf \big  \{ \mathcal{X}( n^1, \ldots, n^N) \, :\, 
\sum_{i=1}^N n^i \leq(d^*_\Gcal+1)/2\   \big \},
$
which is finite and positive and corresponds to the smallest ``vertex factor" (on-site weight together with the number all  possible pairings) when all incident edges (including the edge to the ghost vertex) have at most one link and, using also (\ref{eq:monotonXi}),
 we observe that  for any
$\boldsymbol{m}$ satisfying 
$\sum_{{i \in [N],  y \sim x}}  m^i_{\{x,y\}   } > 2  k$ for any $x \in A$, we have that,
$$
\prod_{x \in \Vcal} \mathcal{X} \big (\boldsymbol{n_x}(\bm) \big ) \leq 
\Big (  \frac{\mathcal{X}_{sup}(k) }{\mathcal{K}} \Big )^{|A|} \, \, 
\prod_{x \in \Vcal} \mathcal{X} \big (
\boldsymbol{n_x}\big (M_{A}(\bm) \big ).
$$
Moreover, for each edge which is incident to at least one vertex in $A$ and for any colour we use the bound,
$$
\sum_{ n \in 2 \mathbb{N}+ q  } \frac{u^n}{n!} \leq u^q \, e^u,
$$
which holds for any  $q\in \{0,1\}$.
Using such two bounds in (\ref{eq:step1localtimes}) 
and the fact that, if $\boldsymbol{m}$ 
satisfies
$\mathcal{M}_\Vcal(m) = \xi$ and  
$m_e^i = \xi_e^i$ for any $e \in E_A$ and $i \in [N]$, 
then 
$\mathcal{X} \big (\boldsymbol{n_x}
(M_A(\bm) \big ) = 
\mathcal{X} \big (\boldsymbol{n_x}
(\bm) \big )$,
we obtain that,
\begin{align}
\begin{aligned}
\tilde  \mu
\Big (    &  \big\{ \forall x \in A, n_x \geq k\big\}\cap\big\{ \forall i \in[N],  \forall e \in E, 
m^i_e \in 2 \mathbb{N} + \xi^i_e   \big\}  \Big )    \\
 \leq  
& \Big (  \mathcal{X}_{sup}(k)  \, \, \frac{ e^{ h \,  + \,  N \, \beta \, (d^*_\Gcal  +1)  }}
{\mathcal{K}} \, \, \Big )^{|A|} \, 
\sum\limits_{ \substack{  \boldsymbol m \in \tilde \Sigma_G : \\ M_\Vcal(\boldsymbol m) = \xi, \\
m_e^i = \xi_e^i\,  \forall e \in E_A, \, \forall i \in [N] }} \, 
\bigg ( \prod_{i \in[N]}
\prod_{e \in \Ecal } 
 \frac{   \beta^{m^i_e}}{m^i_e!} 
\bigg )
\, 
\bigg ( \prod_{x \in \Vcal } 
 \frac{h^{m^N_{ \{x,g\}  }}}{m^N_{ \{x,g\}  }!} \, \bigg ) 
\Big ( \prod_{x \in \Vcal} 
\mathcal{X} \big (\boldsymbol{n_x}
(\bm) \big )\Big ) 
\\
&=
\bigg (   \mathcal{X}_{sup}(k) \, \,  \frac{e^{ h \,  + N  \,  \beta \, (d^*_\Gcal  +1)    }}{\mathcal{K}}\Big )^{|A|} \, \, 
\tilde  \mu 
\Big (  \bigcap_{i \in [N]} \,  \big\{  \forall e \in E_A, m^i_e = \xi^i_e, 
  \forall e \in E \setminus E_A, 
m^i_e \in 2 \mathbb{N} + \xi^i_e \big\}  \bigg ).
\end{aligned}
\end{align}
From the previous inequality and (\ref{eq:splittingmultline}) we deduce that,
\begin{multline*}
\tilde \mu
\Big (     \forall x \in A, n_x \geq k    \Big )  \\ \leq 
\Big (  \mathcal{X}_{sup}(k) \frac{ e^{ h \,  + \, N \,  \beta \, 
(d^*_\Gcal  +1) }}{\Kcal}\Big )^{|A|} 
\sum\limits_{  \xi \in \Sigma_G }
\tilde  \mu \Big (     \forall i \in [N],  \forall e \in E \setminus E_A, 
m^i_e \in 2 \mathbb{N}+  \xi^i_e     \Big )  \\
\leq   \, \Big ( \mathcal{X}_{sup}(k)   \frac{e^{  h \,  + \,  N \, \beta \, (d^*_\Gcal  +1)   }}{\Kcal} \, \, \Big )^{|A|} \, \, \tilde \mu( \mathcal{S}_G ).
\end{multline*}
From (\ref{eq:boundgoestozero}), we deduce that the quantity inside the brackets in the right-hand side goes to zero with $k$, thus  we conclude the proof of (\ref{eq:Anbound}) and we obtain an explicit expression for the constant \hyperlink{c1}{$c_1$},
\begin{equation}\label{eq:C1definition}
c_1: =  \mathcal{X}_{sup}(k)  \, \frac{e^{  h \,  + \,  N \, \beta \, (d^*_\Gcal  +1)   }}{\Kcal}.
\end{equation}
The proof of (\ref{eq:Bnbound}) is analogous, suppose that $z \in A$, 
denote by  $\tilde  E$  the expectation with respect to $\tilde \mu$. In  the first step we have that, for any $\xi \in \Sigma_G$, 
\begin{multline*}
\tilde E\Big ( m_{ \{z,g\}} \,  \mathbbm{1}\{   \forall x \in A, n_x \geq k, \forall i \in[N], \forall e \in E, m^i_e \in 2 \mathbb{N} +   \xi_e^i   \}  \Big   )
\\
= 
\sum\limits_{ \substack{  \boldsymbol m \in \tilde \Sigma_G : \\ M_\Vcal(\boldsymbol m) = \xi }} \, \, 
\Big ( \prod_{i=1}^{N} \prod_{e\in \Ecal}
\frac{\beta^{m^i_e}}{m^i_e!} \,  \Big ) 
\bigg ( 
\prod_{x \in \Vcal \setminus \{z\}}  
\frac{ h^{m^N_{ \{x,g\}  }}}{m^N_{ \{x,g\}  }!} \, \bigg )  
\bigg ( 
\frac{ m^N_{ \{z,g\}  }  h^{m^N_{ \{z,g\}  }}}{m^N_{ \{z,g\}  }!} \, \bigg ) \\
\bigg ( \prod_{x \in \Vcal} \mathcal{X}(\boldsymbol{n_x}(\bm)) \mathbbm{1}_{\big \{ \mbox{$\forall\,x \in A$,  $\sum\limits_{i \in [N], y \sim x} m^i_{\{x,y\}   } > 2 k$}  \big  \}}(\boldsymbol{m}) \bigg ),
\end{multline*}
and in the next steps we proceed analogously to the previous case with the exception that we bound the sum associated to the edge $\{z, g\}$ by 
$
\sum_{m \in 2\mathbb{N}+q}
m \, \tfrac{h^m}{m!} \leq h^{q+1} e^h$, for $q \in \{0,1\}$. This concludes the proof.
\end{proof}

For any $\epsilon \in (0, 1)$, $x, y \in \Vcal$, and $k \in \mathbb{N}$, let   $\mathcal{E}_{x,y, \epsilon, k} \subset \mathcal{W}_G$
 be the set of configurations $w$ such  that there exists a self-avoiding nearest-neighbour path 
 in $G$, $\gamma = (x_0, x_1, \ldots, x_{\ell})$, with $x_0 = x$ and $x_{\ell}  =y$, such that at least $\epsilon \,  \ell$ vertices $z \in \gamma$ are such that $n_z(w) > k$.
Depending on the context, we might write  $\mathcal{E}_{x,y, \epsilon, k}  \subset \mathcal{M}_G$
for the set of link cardinalities $m \in  \mathcal{M}_G$ satisfying the same property.

\hypertarget{c2}{
\begin{lem}\label {lem:linkcardinalityevent}
Fix $N \in \mathbb{N}_{>0}$, $\beta, h  \geq 0$. For any $\epsilon \in (0,1)$,
there exist $C_2  = C_2 ( d_\Gcal^*, N, \beta, \epsilon, h ) \in (0, \infty)$ and  $K=   K(d_\Gcal^*, N, \beta, h) \in \mathbb{N}$ such that, for any $k \geq K$, 
$$
\forall x, y \in \mathcal{V} \quad \quad \mathbb{E}_{G, N, \beta, h} \big ( \,  m_{\{x,g\}} \, \mathbbm{1}\{  \mathcal{E}_{x,y, \epsilon, k}   \}  \, \big ) \leq C_2 \,  \, \,    e^{ \, - d_{\Gcal}(x,y) }.
$$
Moreover,  $K(d_\Gcal^*, N, \beta, \epsilon, h)$ can be chosen to be non-decreasing with  $h$.
\end{lem}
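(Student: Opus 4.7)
The plan is a union bound over self-avoiding nearest-neighbour paths $\gamma = (x_0, \ldots, x_{\ell})$ with $x_0 = x$, $x_\ell = y$, together with a choice of a subset $A \subset \gamma$ of $\lceil \epsilon \ell \rceil$ vertices carrying local time greater than $k$, after which Lemma \ref{lemma:linkcardinalityset} controls each resulting summand. Because $\mathcal{E}_{x,y,\epsilon,k} \subset \mathcal{E}_{x,y,\epsilon,K}$ whenever $k \geq K$, it suffices to establish the inequality at $k = K$. Writing $L := d_{\Gcal}(x,y)$, any self-avoiding path in $\Gcal$ from $x$ to $y$ has length $\ell \geq L$, the number of such paths of length $\ell$ starting at $x$ is at most $(d^*_{\Gcal})^{\ell}$, and the number of subsets $A$ of size $\lceil \epsilon \ell \rceil$ is at most $2^{\ell+1}$, producing
\begin{equation*}
\E_{G,N,\beta,h}\bigl(m_{\{x,g\}}\,\mathbbm{1}\{\mathcal{E}_{x,y,\epsilon,K}\}\bigr) \;\leq\; \sum_{\ell \geq L} 2^{\ell+1}(d^*_{\Gcal})^{\ell} \max_{\gamma,A} \E_{G,N,\beta,h}\bigl(m_{\{x,g\}}\,\mathbbm{1}\{\forall z \in A,\; n_z > K\}\bigr).
\end{equation*}

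To absorb the extra factor $m_{\{x,g\}}$, I would split each inner expectation according to whether $n_x > K$. On $\{n_x > K\}$ the vertex $x$ may be adjoined to $A$, and (\ref{eq:Bnbound}) applied with $z = x \in A \cup \{x\}$ yields a bound $h\, c_1^{|A|}$. On $\{n_x \leq K\}$, the constraint $u^N_x = 0$ imposed on $\mathcal{W}^\prime_G$ forces every $N$-link incident to $x$ to be paired at $x$, hence $m_{\{x,g\}} \leq 2 n_x^N \leq 2 n_x \leq 2K$; combined with (\ref{eq:Anbound}) this gives a bound $2K\, c_1^{|A|}$. In both cases the inner expectation is at most $(h + 2K)\, c_1^{\lceil \epsilon \ell \rceil}$, and substituting yields
\begin{equation*}
\E_{G,N,\beta,h}\bigl(m_{\{x,g\}}\,\mathbbm{1}\{\mathcal{E}_{x,y,\epsilon,K}\}\bigr) \;\leq\; 2(h + 2K) \sum_{\ell \geq L}\bigl(2 d^*_{\Gcal}\, c_1^{\epsilon}\bigr)^{\ell}.
\end{equation*}

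Since Lemma \ref{lemma:linkcardinalityset} gives $c_1 = c_1(K) \to 0$ as $K \to \infty$, I choose $K = K(d^*_{\Gcal}, N, \beta, \epsilon, h)$ large enough that $2 d^*_{\Gcal}\, c_1^{\epsilon} \leq e^{-1}$; the geometric series is then dominated by $e^{-L}/(1 - e^{-1})$, and setting $C_2 := 2(h + 2K)/(1-e^{-1})$ delivers the advertised inequality with the correct list of dependencies. The assertion that $K$ can be chosen non-decreasing in $h$ follows from the explicit expression (\ref{eq:C1definition}): $c_1$ is manifestly increasing in $h$, so larger $h$ forces a larger threshold $K$ to keep $c_1^{\epsilon}$ below the same target. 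The main friction in executing this plan is the bookkeeping around the distinguished endpoint $x$: the factor $m_{\{x,g\}}$ cannot be absorbed by the bare local-time estimate (\ref{eq:Anbound}) unless $x$ is forced into the ``bad'' set, which is precisely what compels the split on $\{n_x > K\}$ vs.\ $\{n_x \leq K\}$ and the joint use of both estimates of Lemma \ref{lemma:linkcardinalityset}. A minor secondary point is that the self-avoiding path $\gamma$ is formally defined in $G$; the natural reading, consistent with the sampling construction of Section \ref{sect:samplingprocedure}, restricts $\gamma$ to $\Gcal$, and the ``shortcut'' paths of the form $x \to g \to y$ can in any case be disposed of separately.
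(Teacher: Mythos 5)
Your argument is correct and follows the same overall route as the paper: a union bound over self-avoiding paths $\gamma$ from $x$ to $y$ and over subsets $A \subset \gamma$ of size $\lceil \epsilon\ell\rceil$, an application of Lemma \ref{lemma:linkcardinalityset} to each summand, and a geometric series controlled by choosing $k$ large enough that $2\,d^*_\Gcal\, c_1^\epsilon < e^{-1}$, with the monotonicity of $K$ in $h$ read off from (\ref{eq:C1definition}). The one place you depart from the written proof is in fact a refinement: the paper bounds $\E_{G,N,\beta,h}\big(m_{\{x,g\}}\,\1\{\forall z\in A,\; n_z > k\}\big)\leq h\,c_1^{|A|}$ for every $A\subset\gamma$ by direct appeal to (\ref{eq:Bnbound}), but that estimate is stated and proved only under the hypothesis that the distinguished vertex lies in $A$, which need not hold here since $A$ is an arbitrary subset of $\gamma$ and $x$ may well lie outside it. Your two-case split on whether $n_x > K$ — adjoining $x$ to $A$ in the first case so that (\ref{eq:Bnbound}) applies, and in the second case using $u^N_x = 0 \Rightarrow m_{\{x,g\}} \leq 2 n^N_x \leq 2K$ together with (\ref{eq:Anbound}) — correctly closes this small gap, at the harmless cost of replacing $h$ by $h + 2K$ inside $C_2$. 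Your side remark on paths in $G$ versus $\Gcal$ also matches the paper's tacit reading of its own definition.
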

}
\begin{proof}
Given a self-avoiding walk  $\gamma$ in $\Gcal$,
we denote by $|\gamma|$ the number of vertices contained in  $\gamma$ and we write $\sum_{\gamma : x \rightarrow y}$ for  the sum over all self-avoiding walks starting from $x$ and ending at $y$. Below we apply Lemma \ref{lemma:linkcardinalityset},
 we bound the number of self-avoiding walks of length $n$ by ${d_\Gcal^*}^n$, and $\binom{n}{r} \leq 2^n$, obtaining,
\begin{align}
\begin{split}
&  \mathbb{E}_{G, N, \beta, h} \big (   m_{\{x,g\}} \, \mathbbm{1}\{  \mathcal{E}_{x,y, \epsilon, k}   \}   \big ) 
 \leq  \sum\limits_{  \gamma : x \rightarrow y    }
\sum\limits_{  \substack{ A \subset \gamma  : \\  | A | > \epsilon |\gamma|  }  }
 \mathbb{E}_{G, N, \beta, h} \big (   m_{\{x,g\}} \, \mathbbm{1}\{ 
 \forall x \in A, n_x > k   \}   \big ) 
 \leq 
\sum_{\gamma:x\to y}\sum_{r=\lceil \epsilon|\gamma|\rceil}^{|\gamma|}\binom{|\gamma|}{r} \, h \, c_1^r \\
 & \leq h\sum_{n\geq d_{\Gcal}(x,y)} {(d^*_\Gcal  +1) }^n \sum_{r=\lceil \epsilon n\rceil}^{n}\binom{n}{r}c_1^r \leq \frac{h}{1 - c_1}  \, 
 \sum_{n\geq d_{\Gcal}(x,y)}( 2\,  d_\Gcal^*\, c_1^{  \epsilon} )^{  n } \leq 
 \frac{h}{1 - c_1} \, \frac{1}{1 - 2 \, d_\Gcal^*  \, c_1^\epsilon  } \, \, \big ( 2 \, d_\Gcal^*  \, c_1^\epsilon \big )^{ d_\Gcal(x,y)  },
 \end{split}
\end{align}
where for the last inequality we assumed that $k$ is large enough so that $2 \, d_\Gcal^* \, c_1^\epsilon < 1$ (see equation (\ref{eq:C1definition})).
Choosing $k$ so large that $2 \, d_\Gcal^* \, c_1^\epsilon < e^{-1}$ gives the bound. The monotonicity property of $K$ follows from the definition of \hyperlink{c1}{$c_1$} (see equation (\ref{eq:C1definition})).
\end{proof}

\section{Sampling procedure and number of surviving walks}
\label{sect:samplingprocedure}
The main goal of this section is the proof of the following proposition. The proposition states that, conditional on having `many vertices' with `low' local time between the vertices $x$ and $y$, the expected number 
of $(\{x,g\}, \{y, g\})$-extremal walks is exponentially small in the distance between $x$ and $y$.

Below we condition on sets of configurations in $\mathcal{W}^\prime_G$ which have a prescribed link cardinality
and colouring on the edges of $\Gcal \subset G$, thus we need to ensure that the event on which we condition
has non-zero probability. For this, we introduce the notion of admissible pairs.
We say that the pair $(m,c)$, with $m \in \mathcal{M}_G$ and $c \in \mathcal{C}_G(m)$
is  \textit{admissible} if the total number of $i$-links touching any original vertex is even for any colour $i \in [N]$ and no $i$-link is on a ghost edge if $i \neq N$.
Recall the definition of the event $\mathcal{E}_{x,y,\epsilon, k}$, which was provided above Lemma \ref{lem:linkcardinalityevent}. We use the superscript $^c$ to denote the complementary event. 
In this whole section we again assume that $\beta, h \in \mathbb{R}_{>0}$.

\hypertarget{c3}{
\begin{prop}\label{prop:mainprop}
Assume that $\mathcal{G} = (\mathcal{V}, \mathcal{E})$ is a finite simple graph and that $G = (V,E)$ is obtained from $\mathcal{G}$ by adding a ghost vertex, as defined above Definition \ref{def:measure}.
Let $h, \beta \in \mathbb{R}_{>0}$, $\epsilon \in (0, 1)$ be arbitrary.
There exists a large enough $K_0 \in \mathbb{N}$ and
constants  $c_3, C_3 \in (0, \infty)$ such that, for any $k \geq K_0$,
$x, y \in \mathcal{V}$,  any admissible pair $(\tilde m, \tilde c)$ such that  $\tilde m \in \mathcal{E}^c_{x,y, \epsilon, k}$ and  $\tilde c \in \mathcal{C}(\tilde m)$,
\begin{equation}\label{eq:remainder}
 \mathbb{E}_{G, N, \beta, h} \big (    \, M_{x,y}  \, \big | \, m_e(w) = \tilde m_e, \,  c_e(w) = \tilde c_e \, \, \forall e \in \Ecal \, \big ) \,  \leq \,  C_3 \, e^{  - \, \, \frac{\epsilon}{k} \, \, c_3 \, \, d_{\Gcal}(x,y) },
\end{equation}
where $c_3 = c_3 ( d_\Gcal^*, N, \beta, h) = 
O(h^2)$ in the limit as $h \rightarrow 0$ and $K_0$ is non-decreasing with $h$.
\end{prop}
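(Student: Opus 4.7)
The plan is to exploit the Markov structure of the random path measure to trace each candidate $(\{x,g\},\{y,g\})$-extremal $N$-walk step by step, and to show that at every visited vertex with bounded local time the walk has a uniformly positive conditional probability---of order $h^2/k$---to ``die'' by being paired to a ghost link. On $\mathcal{E}^c_{x,y,\epsilon,k}$ the walk is forced to pass through many such low-local-time vertices, producing the claimed exponential decay in $d_{\mathcal{G}}(x,y)$.

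First I would reduce to a per-walk estimate. Writing
\[
M_{x,y}(w)=\sum_{\ell\text{ on }\{x,g\}}\mathbb{1}\bigl\{\text{the walk through }\ell\text{ has its other extremal link on }\{y,g\}\bigr\},
\]
it suffices to control two quantities under the conditional measure: the expected value of $m_{\{x,g\}}$ (treated by a moment estimate analogous to Lemma \ref{lemma:linkcardinalityset}), and, for each realised ghost link on $\{x,g\}$, the probability $p^{\text{surv}}$ that the walk through it has its other extremal link on $\{y,g\}$. By symmetry among the ghost links at $x$ this probability does not depend on which link is chosen, so it is enough to show that $p^{\text{surv}}$ decays exponentially in $d_{\mathcal{G}}(x,y)$.

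To bound $p^{\text{surv}}$, I would use the sampling procedure of Section \ref{sect:samplingprocedure} to expose the walk one step at a time: having arrived at a vertex $z$ along an $N$-link $\ell$, reveal only the partner of $\ell$ at $z$, conditional on the pairings and ghost-link counts already uncovered. The key estimate is that if $n_z\leq k$, the conditional probability that this partner lies on $\{z,g\}$ (terminating the walk at $z$) is at least $p_0:=c_3(h)/k$ with $c_3(h)=\Theta(h^2)$ as $h\to 0$. Heuristically, among the $2n^N_z-1$ candidate partners at $z$ the fraction on $\{z,g\}$ is $r_z/(2n^N_z-1)\gtrsim r_z/k$; averaging $r_z=m_{\{z,g\}}$ over its parity-constrained conditional marginal, the worst case is when $\sum_{y\sim z}\tilde m^N_{\{z,y\}}$ is even, which forces $r_z$ to be even with $\mathbb{P}(r_z\geq 2)\sim h^2$, accounting for the $h^2$ scaling of $c_3$.

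To finish, on $\mathcal{E}^c_{x,y,\epsilon,k}$ every self-avoiding nearest-neighbour path in $G$ from $x$ to $y$ of length $\ell$ has more than $(1-\epsilon)\ell$ vertices of local time at most $k$. Any walk from $x$ to $y$ contains a self-avoiding subpath of length at least $d_{\mathcal{G}}(x,y)$, so the exploration visits at least $(1-\epsilon)d_{\mathcal{G}}(x,y)$ distinct low-local-time vertices, each of which by the Markov structure of the sampling procedure independently offers a chance $\geq p_0$ of termination. Hence $p^{\text{surv}}\leq(1-p_0)^{(1-\epsilon)d_{\mathcal{G}}(x,y)}\leq\exp\bigl(-c_3(1-\epsilon)d_{\mathcal{G}}(x,y)/k\bigr)$, which is the exponential decay announced in the proposition. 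The main obstacle is establishing the per-step death estimate with the correct $h^2$ scaling uniformly in the exposed past of the exploration; handling the parity-constrained marginal of $r_z$---which in the small-$h$ regime concentrates at $0$ when $\sum_{y\sim z}\tilde m^N_{\{z,y\}}$ is even---is the technical crux and is what forces the $O(h^2)$ scaling of $c_3$.
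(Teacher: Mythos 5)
Your plan captures the main mechanism of the paper's argument: expose the random path configuration via the spatial Markov property (the sampling procedure), show that at each freshly-sampled $k$-candidate vertex the tracked walk dies with conditional probability at least $c/(k)\cdot O(h^2)$ uniformly in the revealed past (the paper's Lemmas \ref{lem:positivitygood} and \ref{lem:deathwithpp}, with the $h^2$ coming from precisely the parity constraint you identify), and then combine these per-step death chances with the fact that on $\mathcal{E}^c_{x,y,\epsilon,k}$ any trajectory from $x$ to $y$ must pass through a positive fraction of $k$-candidate vertices. The $h^2$ heuristic you give matches the paper's computation of $c_5 \leq (k+2)/h^2$.

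Where you genuinely diverge from the paper is in aggregating over the many walks leaving $\{x,g\}$. You propose the factorisation $\mathbb{E}[M_{x,y}] \leq \mathbb{E}[m_{\{x,g\}}]\, p^{\mathrm{surv}}$, justified by exchangeability of the ghost links at $x$ under the RPM measure, and then establish a single per-walk bound $p^{\mathrm{surv}} \leq (1-p_0)^{(1-\epsilon)d_\Gcal(x,y)}$ by tracking only the lowest-indexed walk. The paper instead avoids any exchangeability argument and works entirely with the sampling procedure: it bounds $M_{x,y}$ deterministically by $(m_{\{x,g\}}-\tau)_+$, where $\tau$ is a stopping time counting how many walks have died before the procedure has visited $\epsilon\, d_\Gcal(x,y)$ many $k$-candidate vertices, and then controls $\tau$ via a stochastic domination by i.i.d.\ geometrics (Lemma \ref{lem:domination}) together with a Chernoff bound. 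Your route is arguably cleaner conceptually if the exchangeability is spelled out (the weight \eqref{eq:weight3} is indeed invariant under relabelling the links on $\{x,g\}$, so conditionally on $m_{\{x,g\}}=r$ the events $\{$walk $j$ reaches $y\}$ are exchangeable in $j$), whereas the paper's stopping-time route trades that observation for a more hands-on bookkeeping of how many walks can possibly survive. Two points in your writeup are asserted rather than argued and are exactly where the paper does the work: (i) ``independently offers a chance $\geq p_0$'' is really a conditional stochastic domination statement, since the death events at successive sampled $k$-candidate vertices are not independent but are each bounded below uniformly in the filtration --- this is precisely what Lemma \ref{lem:domination} establishes; (ii) the claim that the exploration visits at least $(1-\epsilon)d_\Gcal(x,y)$ low-local-time vertices requires noting that the walk-tracking strategy samples every distinct vertex on walk 1's trajectory before it dies, so that if walk 1 reaches $y$ the sampled set contains a self-avoiding $x$--$y$ path and hence at least the claimed number of $k$-candidate vertices. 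Also, you only need $\mathbb{E}[m_{\{x,g\}}]<\infty$ uniformly over admissible $(\tilde m,\tilde c)$, which follows from the weight function $\mathcal U$; the paper controls a slightly different object (the tail of $m_{\{x,g\}}$) because its decomposition via $\tau$ naturally calls for it.
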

}
The idea of the proof is that every time that an N-walk `starting from $x$' (i.e, with extremal link on $\{x,g\}$) encounters a vertex with `low' local time and with at least one link on the ghost edge incident to it, with `considerable' probability it pairs to that link and  `dies' there. 
Thus, in order for the walk starting from $x$ to reach $y$, it must happen that the walk \textit{does not die} at any vertex with a link on the ghost edge incident to it. We will show that this happens with exponentially small probability in $d_{\Gcal}(x,y)$. 
The central technical tool for the proof of the proposition is a sampling procedure, which allows us to `reveal' the  number of ghost links incident to any original vertex step-by-step, thus allowing
the comparison with a simpler stochastic process.
 A similar strategy has been used also in \cite{B-T}.

\subsection{Sampling procedure}\label{sec:samplingproc}
 Recall the notation for sets of vertices and their boundaries that was introduced in Section \ref{sec:linkbounds}. 
For any $A \subset \mathcal{V}$, we define the set,
\begin{multline}
\mathcal{S}^{A}_{G} : = \Big \{ \, \,  w = (m, c, \pi) \, \, : \, \,  m \in  \mathbb{N}^{E_A}, \, \,
c \in \mathcal{C}_{E_{A}}(m),    \, \,  \pi \in \mathcal{P}_{A}(m, c), \, \,\\   u^i_x(w) = 0 \, \,   \forall x \in A,  \forall i \in [N], \, \, n^j_g  = 0 \, \,  \forall j  \neq N  \, \Big \},
\end{multline}
where $\mathcal{C}_{E_A}(m)$ is the set of colourings $c = (c_e)_{e \in E_A}$ for $m$,  and $\mathcal{P}_{A}(m, c)$ is the set of pairing functions for $(m,c)$, $\pi = (\pi_x)_{x \in A   \,  \cup \, \partial^e A \, \cup \,  \{g\}}$, such that $\pi_x = \pi_g =  \emptyset$ for any $x \in \partial^e A$ (in other words,  any link touching a vertex $x \in \partial^e A$ or $g$ is unpaired at that vertex).
We obtain that if $A = \Vcal$ (in which case  $\partial^e A = \emptyset$) then  $\mathcal{S}_G^{A}  = \mathcal{S}_G$.
Moreover,  let  $A,  B \subset \mathcal{V}$ be such that 
$\partial ^e A \subset B$ and,
 for any $\tilde w = (\tilde m, \tilde c, \tilde \pi) \in \mathcal{S}^B_{G}$, we define
$$
\mathcal{S}^{A, \tilde w}_{G} : = \Big \{ \, \,    w = (m, c, \pi) \in \mathcal{S}^A_{G},
\, \, : \, \, m_e = \tilde m_e \, \, \, \forall e \in \partial E_A
 \, \,  \Big \},
$$
to be the set of configurations in $\mathcal{S}^{A}_{G}$ which a  agree with $\tilde w$ on  $ \partial E_A$.
In other words, any   realisation in $\mathcal{S}_G^A$ consists  of loops of any colour which are entirely contained in $A$, walks of colour $i \in [N-1]$   with extremal links on the original edges in $\partial E_A$ ($\tilde w$-allowing), and  walks of colour $N$  with extremal links on the original  ($\tilde w$-allowing) or ghost edges in $E_A$.
On this set we define the probability measure,
\begin{equation}\label{eq:measuresampling}
w = (m, c, \pi     ) \in \mathcal{S}^{A, \tilde w}_{G},  \, \, 
 \mathbb{P}^{A, \tilde w}_{G,N, \beta, h}(w) : = \frac{1}{Z_{G,N, \beta, h}^{A, \tilde w}} 
 \Big ( \prod_{e \in E_A  \setminus E_A^g } \frac{\beta^{ m_e  }}{m_e!} \Big ) 
  \Big (   \prod_{x \in A   } \frac{h^{ m_{\{x,g\}}  }}{m_{\{x,g\}}!} \Big ) 
 \Big (    \prod_{x \in A} U_x(w) \Big ),
\end{equation}
where $U_x(w)$ is defined in Definition \ref{def:measure},
and $Z_{G,N, \beta, h}^{A, \tilde w}$ is a normalisation constant.
Sometimes we will omit some of the sub-scripts  for a lighter notation. 
The measure (\ref{eq:measuresampling}) can be viewed as a restriction of $\mathbb{P}_{G, N, \beta, h}$
to subsets of $G$, with a boundary condition possibly allowing walks of any colour entering and leaving $A$
from its boundary.

\textbf{Restrictions and compositions.}
Given two sets, $A, B$, such that $A \subset B   \subset \mathcal{V}$, and a realisation $w = (m, c, \pi) \in \mathcal{S}_G^{B}$,  we let $w{|}_{ A  }$  be the \textit{restriction of $w$ to the vertices of $A$}, namely $ w{|}_{A } = (m^\prime, c^\prime, \pi^\prime)$ is an element of  $\mathcal{S}_G^{A,  w}$ such that 
$$
m^\prime_e = m_e, \quad  c^\prime_e = c_e \quad \quad \forall e \in E_A,$$ and, moreover, 
$$
 \pi^\prime_x = \pi_x\quad   \forall x \in A\,   ; \quad \quad 
 \pi^\prime_x = \pi_g = \emptyset \quad   \forall x \in \partial^e A.
$$
Furthermore, given two disjoint sets of vertices, $A, B \subset \mathcal{V}$, we say that $w = (m, c, \pi) \in \mathcal{S}^A_G$ and $w^\prime = (m^\prime, c^\prime, \pi^\prime) \in \mathcal{S}^B_G$
are \textit{compatible} if they agree on $\partial E_A \cap  \partial E_B$, namely for any $e \in \partial E_A \cap  \partial E_B$ we have that $m_e = m_e^\prime$ and $c_e = c_e^\prime$  (this condition is always fulfilled when
 $\partial E_A \cap  \partial E_B = \emptyset$).
Finally, given two disjoint subsets $A, B \subset \mathcal{V}$,
and two compatible configurations, $w = (m, c, \pi) \in \mathcal{S}^A_G$ and $w^\prime = (m^\prime, c^\prime, \pi^\prime) \in \mathcal{S}^B_G$, 
we define their composition,
$$
w  \sqcup w^\prime,
$$
as the configuration in $\mathcal{S}^{A \cup B}_{G}$,
$w  \sqcup w^\prime =   (m^{\prime \prime}, c^{\prime \prime}, \pi^{\prime \prime})$ satisfying,
$$
m_e^{\prime \prime} = m_e, \, \, c_e^{\prime \prime} = c_e \, \, \, \, \, \, \forall e \in E_A,
\quad \quad 
m_e^{\prime \prime} = m^\prime_e, \, \, c_e^{\prime \prime} = c_e^{\prime} \, \, \, \, \, \, \forall e \in E_B, 
$$
$$
\pi^{ \prime \prime  }_x=  \pi_x\, \, \, \, \, \, \forall x 
\in A, \quad  \quad 
\pi^{ \prime \prime  }_x=  \pi_x^{\prime}\, \, \, \, \, \, \forall x 
\in B.
$$

The {{sampling procedure}} depends on a realisation of link cardinalities on original edges, $\tilde m \in   \mathcal{M}_{\Gcal}$,  on a colouring of such links, $\tilde c \in \mathcal{C}_{\Gcal}(\tilde m)$, and on a sequence of maps which we call 
a \emph{strategy},
$F =  ( F_{A}  )_{  A \subset \mathcal{V}},$ where,
\begin{equation}\label{eq:samplingstrategy}
\forall A \subset \mathcal{V} \quad \quad  F_{A} :  \mathcal{S}^{A}_{G} \mapsto  \mathcal{V} \setminus A.
\end{equation}
The strategy establishes which (original) vertex will be sampled next  depending on the outcome of the previous steps. 
We write $w \sim P$ to denote that $w$ is sampled according to $P$,
where  here $P$ is some unspecified probability measure.
\begin{defn}[\textbf{Sampling procedure}]\label{def:sampling}
The sampling procedure with strategy $F$, and admissible pair $(\tilde m, \tilde c)$, with 
$\tilde m \in \mathcal{M}_\Gcal$, $\tilde c \in \mathcal{C}_\Gcal(\tilde m)$,
 is defined recursively by  the following steps.
\textit{At the {first step}, $n =0$, we select the vertex $x_0 : = F_{\emptyset}( \emptyset) \in \Vcal$ and
we sample a configuration,
$$w_0^\prime \, \, \sim \, \, \mathbb{P}_{G, N, \beta, h} \Big (  \, \,   \cdot   \, \, \Big  | \, \, m_e(w) = \tilde m_e, \, c_e(w) = \tilde c_e  \, \forall e \in \mathcal{E}   \Big ),$$
which is an element of $\mathcal{S}_G = \Scal_G^{\Vcal}$.
 We define the restriction 
$w_0 : = (w^\prime_0){\big|}_{ \{x_0\}  }$, and set $A_0 : = \{x_0\}$.}
At {any step} $n > 0$, we define
$$x_n : = F_{A_{n-1}}( w_{n-1}) \in \Vcal \setminus A_{n-1},$$
(i.e., $x_n$ is chosen according to the strategy and depending on the outcome of the previous steps), which we call the `vertex  selected at the step $n$',
or, in short, `$n$-vertex'. Furthermore,
we sample the configuration,
$$
w^\prime_n \, \, \sim \, \,  \mathbb{P}^{\Vcal \setminus {A}_{n-1}, w_{n-1}}_{G, N, \beta, h}
\Big (   \, \,   \cdot   \, \, \Big  | \, \, m_e(w) = \tilde m_e, \, c_e(w) = \tilde c_e  \, \forall e \in \mathcal{E}  \cap E_{\Vcal \setminus A_{n-1}} \Big ),
$$
-- which we refer to 
 as a `$n$-sampling configuration' --
which is an element of $\mathcal{S}^{\Vcal \setminus A_{n-1}, w_{n-1}}_G$.
We define the new configuration,
$$
w_n : = w_{n-1}  \,\, \,  \sqcup  \,  \, \,\Big (  w^\prime_n \, \big |_{\{x_n\}} \Big ),
$$
(noting that the two composed configuration are compatible by construction),
which we refer to as a `$n$-composed configuration',
and the set,
$$
A_n : = A_{n-1} \cup \{ x_n  \},
$$
which we refer to as a `$n$-explored set'.
This concludes the definition of the step $n$.
We denote by $T : = | \mathcal{V} | - 1$ the last step of the procedure, which, by construction, is such that  $T = \inf \{ n \in \mathbb{N} \, \, : \, \,    A_ n = \mathcal{V}\}$.
\end{defn}
In other words, the sampling procedure defines a (random) sequence of sets $A_0 \subset A_1 \ldots \subset A_T = \mathcal{V}$ 
such that $A_n$ is obtained by adding to $A_{n-1}$ the $n$-vertex, $x_n$. Moreover, it defines a (random) sequence of configurations, $w_1$, $w_2$, $\ldots$, $w_T$, with $w_n \in \mathcal{S}^{ A_{n}  }_G$, such that each new configuration $w_{n}$ is obtained from the previous one,   $w_{n-1}$, by adding the (random) links on ghost edges incident to $x_n$ and by specifying all the pairings at $x_n$
(the number of links on original edges and their colourings are not random since they are fixed by the conditioning and specified by $\tilde m$ and $\tilde c$).
The next proposition states that the configuration $w_T$ obtained in the last step of the procedure, which by construction is an element in $\mathcal{W}_G$, 
 is distributed according to $\mathbb{P}_{G, N, \beta, h} \big ( \, \, \cdot  \, \bigm | \, \, \, m_e(w) = \tilde m_e, \,  c_e(w) = \tilde c_e \, \, \forall e \in \mathcal{E}   \big )$, the measure which was defined in Definition \ref{def:loopwalkmeasure}.
\begin{prop}\label{prop:samplingcorresp}
Let  $\mathcal{P}_{\tilde m, \tilde c, F}$ be the law of the sampling procedure with link cardinality $\tilde m \in \mathcal{M}_{\mathcal{G}}$, $\tilde c \in \mathcal{C}_\Gcal(\tilde m)$ and
 strategy $F$, where $(\tilde m, \tilde c)$ is admissible. We have that, for any $\tilde w \in \mathcal{S}_G$ such that, 
$m_e(\tilde w) = \tilde m_e$ and $c_e(\tilde w) = \tilde c_e$ for every $e\in\Ecal$,
 we have that,
\begin{equation}\label{eq:claimprop}
 \mathcal{P}_{\tilde m, \tilde c, F} \big (  \,  w_T = \tilde w   \, \big )  = \mathbb{P}_{G, N, \beta, h} 
 \Big ( \,  \tilde w \, \bigm | \,     m_e(w) = \tilde m_e, \, c_e(w) = \tilde c_e \,  \forall e \in \mathcal{E}
  \, \Big ).
\end{equation}
\end{prop}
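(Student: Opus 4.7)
The plan is to prove the claim by induction on the step number $n$, with induction hypothesis: the distribution of the $n$-composed configuration $w_n$ agrees with the pushforward under the restriction map $w \mapsto w|_{A_n}$ of the conditional measure $\mathbb{P}_{G, N, \beta, h}\bigl(\,\cdot\,\bigm| m_e(w) = \tilde m_e,\ c_e(w) = \tilde c_e\ \forall e \in \Ecal\bigr)$.

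The starting point is to observe that the weight $\mu_{G, N, \beta, h}(w)$ defined in \eqref{eq:weight3} factorises into: (i) an edge factor $\beta^{m_e}/m_e!$ for each original edge, which is determined by $(\tilde m, \tilde c)$ and hence constant under the conditioning; (ii) an edge factor $h^{m_{\{x,g\}}}/m_{\{x,g\}}!$ for each ghost edge, depending only on the ghost-link count at the endpoint $x \in \Vcal$; and (iii) a per-vertex factor $U_x(w) = \mathcal{U}(n_x(w))$, depending only on the links incident to $x$ and on the pairing $\pi_x$ at $x$. Consequently, for any partition $\Vcal = A \sqcup A^c$ and any $\tilde u \in \mathcal{S}^{A^c}_G$ compatible with the conditioning, the conditional law of $w|_A$ given $w|_{A^c} = \tilde u$ and the original-edge data coincides with the measure $\mathbb{P}^{A, \tilde u}_{G, N, \beta, h}$ defined in \eqref{eq:measuresampling}. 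This is obtained by dividing the full weight by the weight contributed by $A^c$ (the latter being fixed by $\tilde u$) and recognising the result as the expression in \eqref{eq:measuresampling}, with the normalisation $Z^{A, \tilde u}_{G, N, \beta, h}$ absorbing the $A^c$-dependent terms.

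Given this factorisation, the induction is straightforward. The base case $n = 0$ holds by construction, since $w^\prime_0$ is sampled from the full conditional law on $\mathcal{S}_G$ and $w_0 = w^\prime_0|_{\{x_0\}}$. For the inductive step, assume $w_{n-1}$ has the claimed marginal distribution. Since the strategy map $F_{A_{n-1}}$ is deterministic, $x_n$ and $A_n$ are functions of $w_{n-1}$, and we may condition on $w_{n-1}$. By the factorisation above, the conditional law of $w|_{\Vcal \setminus A_{n-1}}$ given $w|_{A_{n-1}} = w_{n-1}$ under the full conditional measure of \eqref{eq:claimprop} equals $\mathbb{P}^{\Vcal \setminus A_{n-1}, w_{n-1}}_{G, N, \beta, h}$, which is precisely the law from which $w^\prime_n$ is drawn. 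Taking the restriction to $\{x_n\}$ and composing with $w_{n-1}$ then yields $w_n$, whose distribution matches the marginal of the full conditional measure on $\mathcal{S}^{A_n}_G$ by the tower property of conditional expectation.

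Applying the induction at $n = T$ gives \eqref{eq:claimprop}, since $A_T = \Vcal$ and $w \mapsto w|_{\Vcal}$ is the identity on $\mathcal{S}_G$. The main technical point I expect to require care is the factorisation of the conditional measure across a partition $(A, A^c)$: one must verify that the normalisation constants align and that the restriction/composition operations interact correctly with pairings at vertices in $\partial^e A$. This works out because a pairing at a vertex $x$ involves only links incident to $x$, whose multiplicities and colours on boundary edges are already fixed by the compatibility condition between $w^\prime_n$ and $w_{n-1}$, so no pairing at any interior vertex of $A_{n-1}$ is ever revisited or altered at later steps.
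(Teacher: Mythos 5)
Your proof is correct and takes essentially the same approach as the paper: both arguments rest on the factorisation of the conditional measure (given the original-edge data) into independent per-vertex factors, which is precisely the spatial Markov property used to identify $\mathbb{P}^{A, \tilde u}_{G,N,\beta,h}$ as the conditional law on $A$ given the complement. The paper expresses the resulting telescoping directly as a product of conditional probabilities (chain rule), whereas you unpack the same telescoping via an explicit induction on the step number; the content is the same.
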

\begin{proof}
Use $(\Omega, \mathcal{F}, \mathcal{P}_{\tilde m, \tilde c, F})$
to denote the probability space of the sampling procedure
with link cardinality $\tilde m \in \mathcal{M}_{\Gcal}$, 
colouring $\tilde c \in \mathcal{C}_{\Gcal}(\tilde m)$,
and strategy $F$. Recall that, by Definition \ref{def:sampling},
for any realisation $\omega  \in \Omega$  of the sampling procedure and any $n \in \{0, \ldots, T\}$,
$w_n^\prime = w^\prime_n (\omega)$ denotes the 
$n$-sampling configuration,
$w_n = w_n(\omega)$ denotes the $n$-composed configuration,
 $x_n = x_n(\omega)$ denotes the $n$-vertex, and
 $A_n = A_n(\omega)$ denotes the $n$-explored set. Define also 
$\tilde x_0 = F_\emptyset(\emptyset)$,
$\tilde A_0 : = \{\tilde x_0\}$,
and, for $n \in \{1, \ldots, T\}$, we define recursively,
$
\tilde x_n : = F_{ \tilde A_{n-1}  } (\tilde w|_{ \tilde A_{n-1} } ),
$
and  $\tilde A_{n} : = \tilde A_{n-1}  \cup \{ \tilde x_{n}  \}$.
The first observation is that, by Definition \ref{def:sampling},
for any $\omega \in \Omega$,
$$
w_T(\omega) = \tilde w \iff 
\forall n \in \{0, \ldots, T\} \quad x_n(\omega) = \tilde x_n, \quad 
A_{n}(\omega) = \tilde A_n, \quad 
  w^\prime_n(\omega)  \big |_{  \{\tilde x_n\}}  =  \tilde w \big |_{  \{\tilde x_n\}}.
$$
  From this, we deduce the first identity below,
  for which we also define $\tilde A_{-1} : = \emptyset$.
   For the second identity, we use 
 the definitions (\ref{eq:probmeasure}), (\ref{eq:measuresampling})
and the  conditional probability formula,
\begin{align*}
 & \mathcal{P}_{\tilde m, \tilde c, F} \big (  \,  w_T = \tilde w   \, \big )     \\
&  = \prod_{n=0}^{T} 
\mathbb{P}^{ \Vcal \setminus  {\tilde A_{n-1} },\tilde w }_{G, N, \beta, h} \Big (  w \in \mathcal{S}^{\Vcal \setminus 
{\tilde  A_{n-1}}}_G :    w_{ |_{\{\tilde x_n\}}} =\tilde w_{|_{\{\tilde x_n\}}} \, \Big | \, \,
    m_e(w) = \tilde m_e, c_e(w) = \tilde c_e\, \,  \forall e \in  E_{\Vcal \setminus \tilde A_{n-1} } \cap \mathcal{E}  \, \Big ) 
\\
& = \mathbb{P}_{G, N, \beta, h} 
 \Big ( \,  \tilde w \, \bigm | \,     m_e(w) = \tilde m_e, \, c_e(w) = \tilde c_e \,  \forall e \in \mathcal{E}
  \, \Big ).
\end{align*}
This concludes the proof.
\end{proof}
We now introduce \textit{$k$-candidate} and \textit{$k$-good vertices},  for arbitrary  $k \in \mathbb{N}$.
\begin{defn}\label{def:good}
We say that a vertex $x \in \mathcal{V}$ is \textit{$k$-candidate} for  $w = (m, c, \pi) \in \mathcal{W}_{\Gcal}$,
 or $m  \in\Mcal_{G}$,
 if 
 $$
 \sum_{y \in \mathcal{V} : y \sim x } m_{\{x, y\}}  \leq k.
$$
We say that the vertex  $x \in \mathcal{V}$ is $k$-good for 
 $w = (m, c, \pi) \in \mathcal{W}_{G}$,
  or $ m\in\Mcal_{G}$,
 if it is $k$-candidate and, additionally,
 $
m_{\{ x,g \}} > 0.
$
\end{defn}

In the sampling procedure, while the link cardinality on the original edges and their colours are  fixed and given by $\tilde m$ and 
$\tilde c$, the link cardinality on the ghost edges is random. The next lemma states that, if  the link cardinality on the original edges, $\tilde m$, is such that the vertex $z$ is $k$-candidate, then, conditional on $m_e(w) = \tilde m$ for each original edge $e$ and on the colourings, with  probability uniformly bounded from below by a positive constant  (which depends only on $k$ and on the model parameters), $z$  is also $k$-good. The existence of $k$-good vertices is  important for the proof of our main theorem since   the $N$-walks starting from $x$ might `die' at such vertices with uniformly positive probability, hence having `many' $k$-good vertices between $x$ and $y$ means it is unlikely for the walks `starting from' $\{x,g\}$ to reach  $y$.

\hypertarget{c4}{
\begin{lem}\label{lem:positivitygood}
Let $k \in \mathbb{N}$ be arbitrary, fix a link cardinality $\tilde m \in  \mathbb{N}^\mathcal{E}$,
a colouring $\tilde c \in  \mathcal{C}(\tilde m)$,
and a strategy $F$, suppose that $(\tilde m, \tilde c)$ is admissible. Let $(\Omega, \mathcal{F}, \mathcal{P}_{\tilde m, \tilde c, F})$ be the  probability space of the sampling procedure. For any $n \in \{0, \ldots T\}$,  let $\mathcal{F}_n$ be the $\sigma$-algebra of the first $n$-steps of the sampling procedure. Let $n \in \{0, \ldots T\}$ be arbitrary and, recalling  Definition \ref{def:sampling} (Sampling procedure), suppose that $\omega \in \Omega$ is such that   the $n$-vertex, $x_n = x_n(\omega)$, is a $k$-candidate for $\tilde m$. Then,
\begin{equation}\label{eq:probkgood}
\mathcal{P}_{\tilde m, \tilde c, F} \Big (  
\mbox{ $x_n$ is $k$-good for } w^\prime_n    \, \, \Big | \, \,  \mathcal{F}_{n-1}     \Big   )(\omega) \geq  c_4,
\end{equation}
where  $c_4 = c_4(N, \beta, h, k) >0$ whenever $h >0$ and, additionally, $c_4 = O(h^2)$ in the limit as $h \rightarrow 0$.
\end{lem}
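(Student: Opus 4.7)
The plan is to reduce the conditional probability in (\ref{eq:probkgood}) to a ratio of local weights at $x_n$, by exploiting the product structure of the measure (\ref{eq:measuresampling}), and then to produce an explicit lower bound using the $k$-candidate hypothesis.

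By Definition \ref{def:sampling}, conditionally on $\mathcal{F}_{n-1}$ the configuration $w_n^\prime$ is sampled from $\mathbb{P}^{\mathcal{V}\setminus A_{n-1}, w_{n-1}}_{G,N,\beta,h}$, further conditioned on the event $\{m_e = \tilde m_e, c_e = \tilde c_e : e \in \mathcal{E}\cap E_{\mathcal{V}\setminus A_{n-1}}\}$. The weight (\ref{eq:measuresampling}) is a product over edges in $E_{\mathcal{V}\setminus A_{n-1}}$ and over vertices in $\mathcal{V}\setminus A_{n-1}$. Since we work inside $\mathcal{S}_G$ (so $u^i_x = 0$ at every original vertex for every colour), the local time $n_{x_n}(w)$ depends only on the link cardinality $m$ and not on the pairings. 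Consequently, the only randomness relevant to $x_n$ is the integer $p := m_{\{x_n, g\}}$ and the pairing $\pi_{x_n}$; the data at all other vertices factorises out of the conditional distribution.

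Summing over $\pi_{x_n}$ (which, for each colour $i$, must pair all $i$-links incident to $x_n$, yielding a factor $(\tilde q_i - 1)!!$ for $i<N$ and $(\tilde q_N + p - 1)!!$ for $i=N$, where $\tilde q_i$ denotes the number of original $i$-links at $x_n$) and then over all the data away from $x_n$, one obtains that $p$ is necessarily even and
$$
\mathcal{P}_{\tilde m, \tilde c, F}\bigl(m_{\{x_n, g\}} = 2r \,\bigm|\, \mathcal{F}_{n-1}\bigr) \;\propto\; W(r) \;:=\; \frac{h^{2r}}{(2r)!}\,\mathcal{U}(n^\circ + r)\,(\tilde q_N + 2r - 1)!!,
$$
where $n^\circ := \tfrac{1}{2}\sum_{i\in[N]} \tilde q_i$, which satisfies $n^\circ \leq k/2$ by the $k$-candidate hypothesis on $x_n$. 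Since $\{m_{\{x_n,g\}}>0\} = \{r\geq 1\}$ and the function $x \mapsto x/(W(0)+x)$ is non-decreasing, the desired probability is at least $W(1)/(W(0) + W(1))$.

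To conclude, the explicit formula (\ref{eq:weightfunction}) gives $\mathcal{U}(r+1)/\mathcal{U}(r) = (2r + N)^{-1}$, whence
$$
\frac{W(1)}{W(0)} \;=\; \frac{h^2\,(\tilde q_N + 1)}{2(2n^\circ + N)} \;\geq\; \frac{h^2}{2(k+N)},
$$
using $\tilde q_N \geq 0$ and $2n^\circ \leq k$. Therefore (\ref{eq:probkgood}) holds with $c_4 := h^2/(2(k+N) + h^2)$, which is strictly positive for $h>0$ and is $O(h^2)$ as $h\to 0$. The only delicate step is the marginalisation leading to the expression for $W(r)$: it relies crucially on the product form of (\ref{eq:measuresampling}) and on the independence, conditional on the link cardinalities, of the pairings at distinct vertices --- a feature of the loop measure $\mathcal{S}_G$ in which every link at an original vertex is paired.
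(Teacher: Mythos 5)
Your argument is essentially the same as the paper's: condition on $\mathcal{F}_{n-1}$, use the product structure of $\mathbb{P}^{\Vcal\setminus A_{n-1},\,w_{n-1}}_{G,N,\beta,h}$ to reduce the conditional probability to a ratio of local weights at $x_n$ (sum over pairings at $x_n$ giving double factorials, times $\mathcal{U}$ of the local time, times the $h$-powers on $\{x_n,g\}$), and then bound the ratio from below by retaining only the zero- and one-step terms. Your final constant $c_4 = h^2/(2(k+N)+h^2)$ is of the same order as the paper's.

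There is one small gap. You assert that $p := m_{\{x_n,g\}}$ is necessarily even, and your formula $W(r)$ is built on the parametrisation $p=2r$ with $n^\circ=\tfrac12\sum_i\tilde q_i$ an integer. This is only correct when $\tilde q_N$ is even. Admissibility forces $\tilde q_i$ to be even for $i<N$, but imposes no parity constraint on $\tilde q_N$ (the ghost links can supply the missing parity). If $\tilde q_N$ is odd, then $p$ must be odd, hence $p\geq 1$, so $x_n$ is $k$-good with conditional probability $1$ and the lemma holds trivially — but your derivation does not say so, and your formula for $W(r)$ is not meaningful in that case ($n^\circ$ is a half-integer and $(\tilde q_N+2r-1)!!$ is the wrong object). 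The paper handles this explicitly by noting that the relevant ratio $c_5$ equals $0$ when $q^N$ is odd, in which case the claim is immediate; you should add the analogous one-line case distinction before specialising to $\tilde q_N$ even. Once that is added, the proof is complete and matches the paper's.
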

}
\begin{proof}
Let $\omega \in \Omega$ be as in the statement of the lemma,
recall Definition \ref{def:sampling} and that  $A_{n} = A_{n}( \omega)  \subset \mathcal{V}$ represents the set of vertices which have been `explored' up to the step $n$, recall also the definition of the $n$-sampling configuration, $w^\prime_n(\omega) \in  \mathcal{S}_{G}^{A_n(\omega), w_{n-1}(\omega)}$.
Since we assume that $x_n(\omega)$ is $k$-candidate for $\tilde m$ we deduce -- after noting that   $x_n(\omega) \in \mathcal{F}_{n-1}$ -- that,
\begin{multline}
\mathcal{P}_{\tilde m, \tilde c, F} \Big (  
\mbox{ $x_n$ is $k$-good for } w^\prime_n    \,  \Big | \, \,  \mathcal{F}_n     \Big   )(\omega)     \\ 
= 
\mathbb{P}^{\Vcal \setminus A_{n-1}(\omega), w_{n-1}(\omega)}_{  G, N, \beta, h  } \Big (  m_{\{x_n(\omega),g\}}>0   \,  \Big | \, \, m_e(w) = \tilde m_e, \,
c_e(w) = \tilde c_e 
\, \, \, \forall e \in E_{\mathcal{V}\setminus A^{n-1}(\omega)} \cap \mathcal{E}    \Big   ).
\end{multline}
For $z\in\Vcal\setminus A_{n-1}(w)$ we define 
$q^i = q^i (z): = \sum_{ y \in \mathcal{V}: y \sim z} {\tilde m}^i_{\{z, y\}}$,
and $q = \sum_{i=1}^N q^i$, 
and note that $q^i$ is even by assumption for any colour $i \neq N$.
By using the conditional probability formula,  factorising 
(and noting that the total number of $i$-links incident to any original vertex is almost surely even for each colour $i$), we obtain that, for any $z \in \mathcal{V} \setminus A_{n-1}(\omega)$ which is $k$-candidate in $\tilde m$,
\begin{align*}
&\mathbb{P}^{\Vcal \setminus A_{n-1}(\omega), w_{n-1}(\omega)}_{  G, N, \beta, h  } \Big (  m_{\{z,g\}}(w)>0     \Big |   \, \, m_e(w) = \tilde m_e, \,
c_e(w) = \tilde c_e 
\, \, \, \forall e \in E_{\mathcal{V}\setminus A^{n-1}(\omega)} \cap \mathcal{E}    \Big   )  
\\ 
&  = \sum\limits_{\substack{ n > 0  : \\ q^N + n \in  2 \mathbb{N} } }
\mathbb{P}^{\Vcal \setminus A_{n-1}(\omega), w_{n-1}(\omega)}_{  G, N, \beta, h  } \Big (  m_{\{z,g\}}(w) = n   \, \Big | \, \, m_e(w) = \tilde m_e, \,
c_e(w) = \tilde c_e 
\, \, \, \forall e \in E_{\mathcal{V}\setminus A^{n-1}(\omega)} \cap \mathcal{E}    \Big   )  
\\
& = \frac{\sum\limits_{\substack{ n>  0 : \\ q^N + n \in  2 \mathbb{N} } } \, h^{n} \, \Ucal (\frac{q + n}{2}) 
\Big ( \prod_{i=1}^{N-1}  (q^i - 1)!!  \Big )\big  ( q^N + n - 1 \big )!!     
}{ \sum\limits_{\substack{ n \geq 0 : \\ q^N + n \in  2 \mathbb{N} } } \, h^{n}  \, \Ucal(\frac{q + n}{2})  \Big ( \ \prod_{i=1}^{N-1} ( q^i - 1)!!   \Big ) \big (   q^N + n - 1 \big )!! }  \\
& = \frac{\sum\limits_{\substack{ n>  0 : \\ q^N + n \in  2 \mathbb{N} } } \, h^{n} \, \Ucal (\frac{q + n}{2}) 
\big  ( q + n - 1 \big )!!     
}{ \sum\limits_{\substack{ n \geq 0 : \\ q^N + n \in  2 \mathbb{N} } } \, h^{n}  \, \Ucal(\frac{q + n}{2})  \big (   q + n - 1 \big )!! } 
 = 
\frac{1}{ 1 + c_5}, 
\end{align*}
where
$c_5$ is defined by the last identity and it is such that  $c_5 = 0$ if $q^N \in 2 \mathbb{N}+1$, in which case the lemma trivially holds. 
In the previous formula we also used the fact that $(n  - 1)!!$ corresponds to the total number of pairings of $n$ links of the same colour touching a vertex, where $n$ is even.
Otherwise, if $q^N \in 2 \mathbb{N}$, then,
\begin{align*}
c_5  & = \frac{ 
\Ucal(\frac{q}{2}) \, 
(q^N - 1)!!   }{ \sum\limits_{\substack{ n>  0 : \\ q^N + n \in  2 \mathbb{N} } } h^n \,  \Ucal(\frac{q + n}{2}) 
 \,  (q^N  - 1 + n)!!   }  \\
&  \leq  \frac{  \Ucal(\frac{q}{2}) \, \, (q^N - 1)!!   }{ h^2 \, \Ucal(\frac{q+2}{2}) \, \, (q^N - 1 + 2)!! }
 = \frac{1}{h^2} \,   \frac{q  + 2  }{q^N - 1 + 2}
\leq 
\frac{k+2}{h^2}    < \, \, \infty,
\end{align*}
where for the second-last inequality  we used the fact that $z$ is $k$-good by assumption.
Note that the left-hand side depends only on $k$ and $h$ and that $c_5 = O(1/h^2)$ as $h \rightarrow 0$. This concludes the proof.
 \end{proof}

\subsection{The walk-tracking sampling strategy}
We now define a specific sampling strategy, the \textit{walk-tracking sampling strategy.}
The walk-tracking  sampling strategy consists of selecting at every step $n$ a vertex,
$x_n$, which belongs to the external boundary of the $n$-explored set and  such that a walk with an extremal link on  $\{x_0,g\}$ leaves the set $A_{n-1}$ precisely from $x_n$, where $x_0$ is the $0$-vertex of the sampling procedure. 
The walk-tracking sampling strategy will allow a  comparison  with a simpler stochastic process in order  to bound from above (stochastically) the number of  walks with extremal link on  $\{x_0, g\}$ 
which reach the external boundary of $A_n$ (namely, which `survive' until the step $n$) as a function of $n$.
We will show that  this number decays  exponentially fast with $n$ and  the expected number of walks with extremal link on $\{x_0,g\}$ ever touching  $y$ will turn  out to be exponentially small with the graph distance between $x_0$ and $y$. 

Before introducing the definition of the walk-tracking sampling strategy, we introduce the notions of {surviving walks}, {escape vertex,} and {selected walk.}  
Given a set $A \subset \mathcal{V}$, a vertex $x \in A$, 
 a configuration $w \in \mathcal{S}_G^A$, and an integer $j \in \{1, \ldots, m^N_{ \{x,g\} }(w)\}$, we say that the \textit{$j$-th walk of $w$ from $x$ survives in $A$} if  there exists an edge $\{z,q\} \in \partial E_A$, $q \in A$, $z \in \mathcal{V} \setminus A$, such that a walk in $w$   with extremal link  $(\{x,g\},j)$ and with the other extremal link on $\{z,q\}$ exists. In other words, this walk starts from the \textit{$j$-th} link on $\{x,g\}$ and first leaves the set $A$ on the edge $\{z,q\}$.
 We call such a vertex $z$ the \textit{escape vertex} of the $j$-th surviving walk in $A$ for $w \in \mathcal{S}_G^A$.
Moreover, we define,
\begin{equation}\label{eq:conditionsampling}
s_{x, A}(w) : = \inf \{j \in \mathbb{N}_{>0} \, : \,  \mbox{ the $j$-th walk from $x$ in $w$ survives in $A$}   \},
\end{equation}
corresponding to the smallest index of all  walks 
of $w$ from $x$ which survive in $A$ and,  if $s_{x, A}(w) < \infty$, we call the 
\textit{$s_{x, A}(w)$-th}  walk of  $w \in \mathcal{S}_G^A$
from $x$ surviving in $A$
\textit{the selected walk  in $(x, A)$ for $w \in \mathcal{S}_G^A$.}
In other words, the selected walk is a surviving walk which whose extremal link on $\{x,g\}$ has minimal label.

\begin{defn}[walk-tracking  strategy from $x \in \mathcal{V}$]\label{def:trackingwalkstr}
We call  the strategy $F = (F_A)_{A \subset V}$  a \emph{walk-tracking  strategy} from  $x$ if it satisfies the following two properties: \textbf{1.} It starts from  $x$, namely
$
F_\emptyset(\emptyset)  = \{x \}.
$
\textbf{2.} For any   $A \subset \mathcal{V}$ such that $x \in A$, and for any $w \in \mathcal{S}_G^A$ such that,
$
s_{x, A} = s_{x, A}(w)  < \infty,
$
we have that
 $F_{A}(w) : = \{z\},$
 where $z$ is the  {escape vertex} of the selected walk in $(x,A)$ for
$w \in \mathcal{S}_G^A$. 
\end{defn}
In other words, at any step, the walk-tracking  strategy `selects'   the  escape vertex of the selected walk until such a walk `dies', after that it  `selects' the escape vertex of the next selected walk until this walk also `dies', and it continues this way until no selected walk exists. Note that the walk-tracking  strategy is not uniquely defined, there might be several walk-tracking  strategies from $x$. 

We now provide a formal definition of `\textit{death of the selected walk}' (or simply `death of the walk').
Fix  an arbitrary walk-tracking  strategy  $F$  which starts  from $x \in \mathcal{V}$, recall the definition $(\Omega, \mathcal{F}, \mathcal{P}_{\tilde m, \tilde c, F })$, and recall the definition of the $n$-composed configuration,
$w_n = w_n(\omega)$ which is provided in Definition \ref{def:sampling} (Sampling procedure).
 We say that the  selected walk \textit{dies at step $n$  of the procedure} if the selected walk, which by definition has an extremal link on the edge $\{x,g\}$ in $w_n$, also has an extremal link on the edge $\{x_n, g \}$ in $w_n$.
Note that the event `the selected walk dies at step $n$' is measurable in $\mathcal{F}_n$,
the $\sigma$-algebra generated by the first $n$ steps of the sampling procedure, 
 and we assume that it is empty if no selected walk exists.

The next lemma states that, when we perform a sampling procedure following  a walk-tracking strategy, if at step $n$ we select a  $k$-candidate vertex in $\tilde m$, then with probability uniformly bounded from below by a positive constant 
(which depends only on $k$ and on the model parameters)  the selected walk dies at step $n$.  
The lemma is a consequence of Lemma \ref{lem:positivitygood}.

\begin{lem}\label{lem:deathwithpp}
Choose an arbitrary integer $k \in \mathbb{N}$, an arbitrary  walk-tracking  strategy  $F$  which starts  from $ x \in \mathcal{V}$,  and an admissible pair $(\tilde m, \tilde c)$ with  $\tilde m \in \mathbb{N}^\mathcal{E}$,
$\tilde c \in \mathcal{C}_{\Gcal}(\tilde m)$,  moreover recall the definition $(\Omega, \mathcal{F}, \mathcal{P}_{\tilde m, \tilde c, F })$. Suppose that  $\omega \in \Omega$ is such that the vertex which we sample at step $n \in \mathbb{N}$, $x_n(\omega)$, is a $k$-candidate for $\tilde m$ and, additionally,
assume that a selected walk  in $(x_n(\omega), A_{n-1}(\omega))$ for  $w_{n-1}(\omega)$ exits.
Then,
$$
\mathcal{P}_{\tilde m, \tilde c, F} \big ( 
\mbox{the  selected walk dies at the step $n$}\, \Big | \, \mathcal{F}_{n-1} \big   )(\omega) \,  \geq \, \frac{c_4}{k+1},
$$
where \hyperlink{c4}{$c_4$} was defined in Lemma \ref{lem:positivitygood}.
\end{lem}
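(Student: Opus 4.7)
The plan is to reduce to Lemma \ref{lem:positivitygood} via a conditioning on the (random) number of ghost links $m_{\{x_n,g\}}$ and a uniform-pairing argument at $x_n$.

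First, I would unpack what "death at step $n$" means in terms of $w'_n$. By the definition of the walk-tracking strategy, the selected walk (an $N$-walk with extremal link on $\{x_0,g\}$) reaches $x_n$ via one specific, $\mathcal{F}_{n-1}$-measurable $N$-link $\ell^\star$ sitting on an edge of $\partial E_{A_{n-1}}$; its $x_n$-side is still unpaired in $w_{n-1}$ (since $\pi_{x_n}=\emptyset$ for $x_n \in \partial^e A_{n-1}$). Death at step $n$ is exactly the event that, after sampling $w'_n$, the link $\ell^\star$ is paired at $x_n$ to some link on the edge $\{x_n,g\}$. I would then decompose
\[
\mathcal{P}_{\tilde m,\tilde c,F}\bigl(\text{death at }n\,\bigm|\,\mathcal{F}_{n-1}\bigr)(\omega)
= \sum_{r\ge 1}\mathcal{P}_{\tilde m,\tilde c,F}\bigl(m_{\{x_n,g\}}=r\bigm|\mathcal{F}_{n-1}\bigr)(\omega)\cdot p_r,
\]
where $p_r$ denotes the further conditional probability of death given also $m_{\{x_n,g\}}=r$. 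The $r=0$ term contributes zero.

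Next I would compute $p_r$ by a uniform-pairing argument. Inspecting the measure $\mathbb{P}^{\mathcal{V}\setminus A_{n-1},w_{n-1}}_{G,N,\beta,h}$: conditional on the link cardinalities and colours at $x_n$, the weight of a configuration depends only on $n_{x_n}=q_{x_n}/2$ (all links at original vertices being paired, in $\mathcal{S}_G^{A_n}$), hence it is independent of the specific pairing chosen at $x_n$. The pairings at $x_n$ are therefore uniform, and independent of the pairings at other vertices in $\mathcal{V}\setminus A_{n-1}$. Writing $q^N:=\sum_{y\sim x_n}\tilde m^N_{\{x_n,y\}}$ (known from $\mathcal{F}_{n-1}$), there are $q^N+r$ many $N$-links at $x_n$, and $\ell^\star$ is paired, uniformly at random, with one of the other $q^N+r-1$ of them; exactly $r$ of those sit on $\{x_n,g\}$, giving
\[
p_r = \frac{r}{q^N+r-1}.
\]

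The $k$-candidate hypothesis gives $q^N\le q_{x_n}\le k$, hence for every $r\ge 1$ (and $k\ge 1$) the elementary inequality $rk\ge k+r-1$ yields $p_r\ge r/(k+r-1)\ge 1/k\ge 1/(k+1)$. Plugging back and invoking Lemma \ref{lem:positivitygood},
\[
\mathcal{P}_{\tilde m,\tilde c,F}\bigl(\text{death at }n\bigm|\mathcal{F}_{n-1}\bigr)(\omega)
\ge \frac{1}{k+1}\sum_{r\ge 1}\mathcal{P}_{\tilde m,\tilde c,F}\bigl(m_{\{x_n,g\}}=r\bigm|\mathcal{F}_{n-1}\bigr)(\omega)
= \frac{\mathcal{P}(x_n \text{ is $k$-good for }w'_n\mid\mathcal{F}_{n-1})(\omega)}{k+1}\ge \frac{c_4}{k+1},
\]
which is the claim. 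The only delicate point is the uniformity statement used to obtain $p_r$: I would take a short paragraph to justify that, conditionally on the colour-resolved link cardinalities at $x_n$, the measure assigns equal mass to each pairing at $x_n$ (since $U_{x_n}$, the pairing combinatorics, and the link-weights all depend only on those cardinalities), and that pairings at different vertices are conditionally independent — this is the main technical point, but it follows directly from the product form of $\mu_{G,N,\beta,h}$ in Definition \ref{def:measure} and is essentially already used in the proof of Lemma \ref{lem:positivitygood}.
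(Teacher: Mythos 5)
Your proof is correct and follows essentially the same route as the paper: decompose over the conditional law of $m_{\{x_n,g\}}$, use the uniformity of the pairing at $x_n$ (conditional on colour-resolved cardinalities) to compute the probability that the escape link is matched to a ghost link, and then invoke Lemma~\ref{lem:positivitygood}. One small point worth noting: your conditional probability $p_r = r/(q^N+r-1)$ (partner of a fixed element in a uniform perfect matching is uniform over the other $q^N+r-1$ elements) is the sharper and, strictly speaking, the correct value; the paper writes $\ell/(q^N_{\mathrm{orig}}+\ell)$, i.e.\ with denominator $q^N+r$, which is off by one but still yields the same final lower bound $1/(k+1)$.
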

\begin{proof}
Recall that $x_n(\omega)$ is the vertex which we select at step $n$, which  -- by definition of the walk-tracking strategy --  is the escape vertex of the selected walk, and that the selected walk  in the $(n-1)$-composed configuration exists by assumption for the realisation $\omega \in \Omega$.
Hence,  the selected   walk in the $(n-1)$-composed configuration, $w_{n-1} = w_{n-1}(\omega)$, has an extremal link on $ \{x,g\} = \{x_0(\omega),g\}$, and it contains a link on an edge connecting a vertex in $A_{n-1} = A_{n-1}(\omega)$ to $x_n \in  \mathcal{V} \setminus A_{n-1}$, which we refer to as the \textit{escape link}.
We let $\mathcal{R}$ be the event that the escape link is paired at $x_n(\omega)$ to a link on the ghost edge $\{ x_n(\omega), g  \}$
(this event is defined to be empty if no link on the ghost edge exists).
We have that, for any $\omega \in \Omega$  as in the statement of the lemma,
\begin{multline}\label{eq:previousexpr}
\mathcal{P}_{\tilde m, \tilde c, F} \big (   
\mbox{the  selected walk dies at the step $n$}\, \Big | \, \mathcal{F}_{n-1}  \big   )(\omega)   
 =  \\
 \sum\limits_{  \ell = 1 }^\infty \1_{q^N(l,\tilde m)\in2\N}
 \mathbb{P}^{A_{n-1}(\omega), w_{n-1}(\omega)}_{  G, N, \beta, h  } \Big (  \mathcal{R} \cap \{  m^N_{\{x_n(\omega),g\}}  = \ell   \}   \, \Big | \, \, m_e(w) = \tilde m_e, \,
c_e(w) = \tilde c_e 
\,  \forall e \in E_{\mathcal{V}\setminus A_{n-1}(\omega)}  \cap \Ecal \Big   ),
\end{multline} 
where we used the notation   $q^N = q^N(\ell, \tilde m): = \ell + \sum_{ y \in \Vcal : y \sim x_n(\omega)  }
\tilde m^N_{\{x_n(\omega), y \}}.$
Moreover by Definition \ref{def:loopwalkmeasure} and by the fact that $x_n(w)$ is a $k$-candidate for $\tilde m$ by assumption, we obtain that, for any $\ell \in \N$,
\begin{align}
\begin{split}
\mathbb{P}^{A_{n-1}(\omega), w_{n-1}(\omega)}_{  G, N, \beta, h  } \big (  \mathcal{R} \, \, \big | \, \, m_e(w) & = \tilde m_e, \,
c_e(w) = \tilde c_e 
\,  \forall e \in E_{\mathcal{V}\setminus A_{n-1}(\omega)}  \cap \Ecal,   m^N_{\{ x_n(\omega),g\}}(w)  = \ell     \,  \big   ) \\ 
\label{eq:consideration1234} & = 
\frac{ \ell }{  q^N (\ell, \tilde m)}   \geq \frac{ 1 }{  k  +1  }. 
\end{split}
\end{align}
For the previous identity we used the fact that, by the definition of the probability measure (\ref{eq:measuresampling}), conditional on the link cardinalities and colouring on all the edges which are incident to a given vertex in $A \subset \Vcal$, the pairing function at that vertex has uniform distribution on the set of allowed pairings. 
By combining 
(\ref{eq:previousexpr})
and (\ref{eq:consideration1234})
and by using the fact that,  by Lemma \ref{lem:positivitygood},
conditional on  the vertex $x_n(\omega)$ being $k$-candidate, 
with probability at least \hyperlink{c4}{$c_4$} it is also $k$-good,
we deduce that, for any $\omega \in \Omega$ such that $x_n(\omega)$ is $k$-candidate,
\hypertarget{c6}{
$$
\mathcal{P}_{\tilde m, \tilde c, F} \big (   
\mbox{the  selected walk dies at the step $n$}\, \Big | \, \mathcal{F}_{n-1}  )(\omega) \geq c_6 : = 
\frac{c_4}{k+1},
$$
}
where the constant \hyperlink{c4}{$c_4$} was defined in Lemma  \ref{lem:positivitygood}.
This concludes the proof.
\end{proof}

\subsection{Stochastic comparison and proof of Proposition \ref{prop:mainprop}}
Consider a sampling procedure with walk-tracking strategy,
and denote its probability space by $(\Omega, \mathcal{F}, 
\mathcal{P}_{\tilde m, \tilde c})$, as introduced above.
 Recall the definition of the selected walk provided above Definition \ref{def:trackingwalkstr} and the definition of death of the walk provided above Lemma \ref{lem:deathwithpp}.
For any realisation of the sampling procedure $\omega \in \Omega$ we set $T_{0} = 0$ and we define recursively for any $j \in \mathbb{N}_{>0}$,
$$
T_j(\omega) : = \inf  \{  n >  T_{j-1}(\omega) \, : \mbox{ the selected walk in $A_n(\omega)$  dies at step $n$ and $x_n(\omega)$ is $k$-candidate} \},
$$
the step a selected walk dies for the $j$-th time when a $k$-candidate vertex is selected in the course the procedure, using the convention $\inf \{ \emptyset  \} = \infty$. Moreover,  for any $j \in \mathbb{N}_{>0}$,
 we denote by
$$
X_j (\omega) : =\big |   \{  n \in \{T_{j-1} (\omega)+1, \ldots, T_j (\omega)  \wedge T   \} \, :  x_n(\omega) \mbox{ is $k$-candidate in $\tilde m$} \} \big |,
$$
the number of times 
between two consecutive deaths of the selected walk that
a $k$-candidate vertex is sampled.
We now define a sequence of independent random variables,
$(Y_j)_{j \in \mathbb{N}_{>0}}$, with geometric distribution, $Ge(1 - c_6)$, each, where \hyperlink{c6}{$c_6$} is the constant which appears in Lemma \ref{lem:deathwithpp} and the average of $Y_j$ is $\frac{1}{c_6}$.
The next lemma states that the variables $Y_j$  stochastically bound from above the variables $X_j$. The reason is that, by Lemma \ref{lem:deathwithpp}, at every step on a $k$-candidate vertex the selected  walk dies with probability at least $c_6$
uniformly. The proof of the lemma is standard and it is presented in the appendix.

\begin{lem}\label{lem:domination}
Let $\tilde m \in \mathbb{N}^\mathcal{E}$ be a link cardinality on original edges,  let $\tilde c \in \mathcal{C}_{\Gcal}(\tilde m)$ be a colouring of $\tilde m$, assume that $(\tilde m, \tilde c)$ is admissible and let $F$ be a walk-tracking sampling strategy. 
Then, for any $\ell, r  \in \mathbb{N}$,
$$
\Pcal_{\tilde m, \tilde c, F}  \big (   
\sum\limits_{i=1}^\ell X_i  > r  \, \bigm | \mathcal{F}_0   \big ) \,  \leq  \, 
\mathcal{P}_{\tilde m, \tilde c, F} \big ( 
\sum\limits_{i=1}^\ell Y_i  > r \, \big   ),
$$
where  we use $\mathcal{P}_{\tilde m, \tilde c, F}$ also for the law of the   variables $(Y_n)_{n \in \mathbb{N}}$, which we assume to be defined in the same probability space of the sampling procedure and which are independent from the sampling procedure.
\end{lem}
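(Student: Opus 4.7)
The argument is a stochastic-domination coupling, whose driver is the uniform lower bound $c_6$ on the one-step conditional death probability supplied by Lemma \ref{lem:deathwithpp}. The plan is to produce, on an enlargement of the probability space, an iid geometric sequence that pointwise dominates $(X_i)$.

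First, I would enlarge $(\Omega, \mathcal{F}, \mathcal{P}_{\tilde m, \tilde c, F})$ by attaching to each step $n\in\{1,\ldots,T\}$ an independent uniform $[0,1]$ variable $U_n$, independent of the sampling procedure. At each step $n$ at which $x_n(\omega)$ is $k$-candidate for $\tilde m$ and a selected walk in $(x_0(\omega), A_{n-1}(\omega))$ exists in $w_{n-1}(\omega)$, I would realise the death event by the standard recipe $\{\text{selected walk dies at step } n\} := \{U_n \leq p_n(\omega)\}$, where $p_n := \mathcal{P}_{\tilde m, \tilde c, F}(\text{selected walk dies at step } n \mid \mathcal{F}_{n-1})$. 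By Lemma \ref{lem:deathwithpp} we have $p_n \geq c_6$ on this event, which gives the key inclusion
\begin{equation*}
\{U_n \leq c_6\} \;\subseteq\; \{\text{selected walk dies at step } n\}.
\end{equation*}

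Next I would introduce the auxiliary stopping times $\hat T_0 := 0$ and, for $j \geq 1$,
\begin{equation*}
\hat T_j := \inf\{n > \hat T_{j-1} \,:\, x_n \text{ is $k$-candidate in } \tilde m \text{ and } U_n \leq c_6\},
\end{equation*}
together with the auxiliary counts $\hat X_j := |\{n \in \{\hat T_{j-1}+1,\ldots,\hat T_j \wedge T\} \,:\, x_n \text{ is $k$-candidate}\}|$. Since the $U_n$ are independent of the sampling procedure and each $k$-candidate step produces an auxiliary success with probability exactly $c_6$ independently of the past, the $\hat X_j$ are iid with law $\mathrm{Ge}(1-c_6)$, matching the $Y_j$. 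By the inclusion above, each auxiliary success at a $k$-candidate step $n$ triggers a real death, provided a selected walk still exists at step $n$. Consequently one obtains $T_j \leq \hat T_j$ on the event that walks persist up to $\hat T_j$, which yields $\sum_{i=1}^\ell X_i \leq \sum_{i=1}^\ell \hat X_i$ there, and the claim follows by reading off the distributional identity $\sum_i \hat X_i \stackrel{d}{=} \sum_i Y_i$.

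The delicate point, which I expect to be the main obstacle, is the edge case in which the walk-tracking procedure exhausts all surviving walks from $x_0$ before producing $\ell$ deaths, so that $T_\ell = \infty$ and $\sum_{i=1}^\ell X_i$ inflates to count every $k$-candidate step in the procedure. The standard remedy is to augment the sampling by declaring virtual deaths at steps where no selected walk exists, using fresh independent $c_6$-coins built from the same $U_n$, so that the coupling inequality $T_j \leq \hat T_j$ extends across the whole procedure; one then checks that the augmented $X_i$ still dominate the original ones, which yields the claimed stochastic domination. This bookkeeping is the routine technicality that the paper signals by calling the proof ``standard'' and deferring it to the appendix.
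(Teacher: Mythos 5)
The proof you propose is a genuine alternative to the paper's argument, but it has a gap at exactly the point you flag, and the repair you sketch does not work.

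\textbf{Comparison of approaches.} The paper proves the lemma by an iterative exchange: for $u=0,1,\ldots,\ell-1$ it conditions on $\mathcal{F}_{T_{\ell-u-1}}$ and on $Y_{\ell-u+1},\ldots,Y_\ell$, shows $\tilde P(X_{\ell-u}>q\mid\cdot)\le(1-c_6)^q$ by iterating the one-step bound of Lemma~\ref{lem:deathwithpp} over the $q$ successive $k$-candidate steps, and concludes that $X_{\ell-u}$ may be replaced by $Y_{\ell-u}$ without decreasing the tail probability. Your argument instead builds an explicit coupling: uniforms $U_n$ attached to the steps, auxiliary success events $\{U_n\le c_6\}$, stopping times $\hat T_j$ and increments $\hat X_j$, and a pathwise comparison $T_j\le\hat T_j$ so that $\sum X_i\le\sum\hat X_i$. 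Both constructions are driven by the same input (Lemma~\ref{lem:deathwithpp}), and on the event that a selected walk exists at every step encountered your coupling is sound and gives the same conclusion.

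\textbf{The gap.} The inclusion $\{U_n\le c_6\}\subseteq\{\text{death at step }n\}$ is only valid when a selected walk exists at step $n$; when no selected walk exists the death event is empty while $\{U_n\le c_6\}$ still has probability $c_6$. On such steps an auxiliary success does not produce a real death, so the count of real deaths can lag behind the count of auxiliary successes, $T_j$ can exceed $\hat T_j$, and the pathwise inequality $\sum_{i\le\ell}X_i\le\sum_{i\le\ell}\hat X_i$ can fail (with $\sum X_i$ inflating to the total number of $k$-candidate steps, as you note). Your proposed remedy is, however, oriented the wrong way: ``declaring virtual deaths'' at exhausted steps can only \emph{advance} the stopping times, i.e.\ produce $T^*_j\le T_j$, hence $\sum X^*_i\le\sum X_i$, so the augmented increments are dominated \emph{by} the originals, not the reverse, and the bound you need on $\sum X_i$ does not follow. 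The symmetric fix --- discarding the auxiliary successes that occur on exhausted steps so that $T_j\le\hat T_j$ is restored --- also fails, because it thins the success process and makes the resulting $\hat X_j$ stochastically \emph{larger} than $\mathrm{Ge}(1-c_6)$, so they cannot be matched to the $Y_j$. In short, the exhaustion case is precisely where the comparison requires an argument, not a bookkeeping convention; the paper's conditional-iteration route is doing that work rather than sidestepping it, and as written your proposal does not close it.
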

We are now ready to present the proof of Proposition \ref{prop:mainprop}.
\begin{proof}[\textbf{Proof of Proposition \ref{prop:mainprop}}]
Suppose that $h, \beta > 0$.
Choose a  pair of vertices $x, y \in \mathcal{V}$,
a link cardinality 
$\tilde m \in \mathcal{E}^c_{x,y, \epsilon, k}$, and a
 colouring $\tilde c \in \mathcal{C}(\tilde m)$ such that $(\tilde m, \tilde c)$ is admissible,
 let $F$ be a walk-tracking sampling strategy from $x$.
Let $\epsilon \in (0, 1)$ and  $k \in \mathbb{N}$ be arbitrary. 
Define the random variables in $(\Omega, \mathcal{F}, \mathcal{P}_{\tilde m, \tilde c,  F})$,
\begin{align*}
\tau & : = \inf \big  \{ u \in  \{1, \ldots, m_{\{x,g\}}(w_T)\}
 \,  : \,      \sum\limits_{i=1}^{u  } X_i > \epsilon \, d_{\Gcal}(x,y)             \big   \}, \\ 
\tau^\prime & : = \inf \big  \{ u \in 
\N \,  : \,      \sum\limits_{i=1}^u Y_i > \epsilon \, d_{\Gcal}(x,y)             \big   \},
\end{align*}
using the convention that  $\inf \{  \emptyset \} = \infty$. 
To begin, note that,
\begin{align*}
\mathbb{E}_{G, N, \beta, h} \Big (   \, M_{x,y} \,  \bigm | 
  m_e(w) = \tilde m_e,  \, \, c_e(w) = \tilde c_e, \, \,  \forall e \in \mathcal{E}  \Big ) 
\nonumber &  =
{E}_{\tilde m, \tilde c,  F} \big (    M_{x,y}(w_T)  \big )  \\
 & 
 \leq 
{E}_{\tilde m, \tilde c, F} \Big (   \,  \big  ( \, m_{\{x,g\}}(w_T)  - \tau \, \big ) \mathbbm{1}\{ m_{\{x,g\}}(w_T)  >  \tau  \}  \Big ),
\end{align*} 
where for the first identity we used Proposition \ref{prop:samplingcorresp}, while for the inequality we used the fact that, by assumption on $\tilde m$, any self-avoiding path connecting $x$ to $y$ contains at least $\epsilon \,d_{\Gcal}(x,y)$ $k$-candidate vertices, hence none of the walks with extremal link $\{x,g\}$ which died before the procedure selects  at least $\epsilon \, d_{\Gcal}(x,y)$ $k$-candidate  vertices can reach $y$.

Now fix the integer $\ell  : =   [   \frac{ \epsilon \, d_{\Gcal}(x,y) \,  c_6}{4}  ]$.
We have that,
\begin{equation}\label{eq:combination0}
\begin{aligned}
&{E}_{\tilde m, \tilde c, F} \Big (     ( m_{\{x,g\}}(w_T)  - \tau ) \mathbbm{1}\{ m_{\{x,g\}}(w_T)  >  \tau  \}  \Big )  
\\ 
&\leq 
{E}_{\tilde m, \tilde c, F} \Big (     m_{\{x,g\}}(w_T)   \mathbbm{1}\{m_{\{x,g\}}(w_T) > \ell  \}  \Big )  + 
{E}_{\tilde m, \tilde c, F} \Big (     ( m_{\{x,g\}}(w_T)  - \tau ) \mathbbm{1}\{ m_{\{x,g\}}(w_T)  >  \tau, m_{\{x,g\}}(w_T) \leq  \ell  \}  \Big )
 \\ 
&\leq 
\mathbb{E}_{G, N, \beta, h}  \big (     m_{\{x,g\}}  \mathbbm{1}\{m_{\{x,g\}} > \ell  \}  \big )  + 
{E}_{\tilde m, \tilde c, F} \big ( m_{\{x,g\}}(w_T) \mathbbm{1}\{ \tau \leq  \ell  \}  \big )
 \\ 
 &\leq 
h \, c_1^{\ell}    + 
{E}_{\tilde m, \tilde c, F} \big ( m_{\{x,g\}}(w_T) \mathbbm{1}\{ \tau \leq  \ell  \}  \big ),
\end{aligned}
\end{equation}
where the last inequality follows from 
Lemma \ref{lemma:linkcardinalityset},  \hyperlink{c1}{$c_1$} $: = c_1(d^*_{\Gcal}, k, N, \beta, h)$ was defined there and goes to zero as $k$ goes to infinity.  We are now going to bound the second term in the right-hand side. For this, note that,
\begin{align}\label{eq:combination}
\begin{split}
{E}_{\tilde m, \tilde c, F} \big ( m_{\{x,g\}}(w_T) \mathbbm{1}\{ \tau \leq  \ell  \}  \big )  & = 
{E}_{\tilde m, \tilde c, F}  \Big ( {E}_{\tilde m, \tilde c, F} \big ( \, m_{\{x,g\}}(w_T) \mathbbm{1}\{ \tau \leq  \ell  \}   \bigm |      \mathcal{F}_{0} \, \big ) \Big ) \\   & = 
{E}_{\tilde m, \tilde c, F}  \Big (   m_{\{x,g\}}(w_0) \, \, {E}_{\tilde m, \tilde c, F} \big (  \mathbbm{1}\{ \tau \leq  \ell  \}   \bigm |      \mathcal{F}_{0} \, \big ) \Big )
\\   & 
\leq
{E}_{\tilde m, \tilde c, F}  \Big (   m_{\{x,g\}}(w_0) \, \, P_{\tilde m, \tilde c, F} \big ( 
\sum\limits_{i=1}^\ell X_i  > \epsilon \,  d_{\Gcal}(x,y)
 \bigm |      \mathcal{F}_{0} \, \big ) \Big ) \\
& \leq 
{E}_{\tilde m, \tilde c, F}  \big (   m_{\{x,g\}}(w_0)  \big )  \, \, 
{P}_{\tilde m, \tilde c,  F}  \big (   
\sum\limits_{i=1}^\ell Y_i  > \epsilon \,  d_{\Gcal}(x,y)
  \big )  \\
  & 
\leq C_7 \,    e^{     -  \epsilon \,  d_\Gcal(x,y)  \,   \frac{c_6}{10} }
\end{split}
\end{align}
where for the first identity we used the fact that,  since the sampling procedure starts from $x$, we have that $m_{\{x,g\}}(w_0) = m_{\{x,g\}}(w_T)$, for the second inequality we used the fact that the variables $Y_j$ are independent from the sampling procedure, for the third inequality we used the fact that ${E}_{\tilde m, \tilde c, F}  \big (   m_{\{x,g\}}(w_0)  \big ) \leq C_7$ for some finite constant $C_7 \in (0, \infty)$ by Lemma \ref{lemma:linkcardinalityset} and the Chernoff bound for sum of i.i.d. geometric random variables,
$P_{\tilde m, \tilde c,  F}  \big (   
\sum_{i \in [\ell] } Y_i  > \lambda \, \rho  \big ) \leq e^{  - c_6 \,  \rho \, (\lambda - 1 - \ln(\lambda)) },$
where $\rho = \frac{1}{4} \, \epsilon  \, d_\Gcal(x,y)$ is the average of the sum of variables and $\lambda$ is any positive real value.
By combining (\ref{eq:combination0}) and (\ref{eq:combination}) 
and recalling that $c_6 = \frac{\hyperlink{c4}{c_4}}{k+1}$, we obtain,
$$
E_{G, N, \beta, h} \big  ( M_{x,y} \, \, \big | \, \, m_e(w) = \tilde m_e, c_e(w) = \tilde c_e \forall e \in \Ecal  \big   )
\leq h e^{  - \epsilon \, d_\Gcal(x,y) \, \log(\frac{1}{c_1})  \frac{c_4}{4(k+1)}     } 
\, + \, 
C_7 \,    e^{     -  \epsilon \,  d_\Gcal(x,y)  \,   \frac{c_4}{10 \, (k+1)} }.
$$
Thus we deduce that, 
$$
E_{G, N, \beta, h} \big  ( M_{x,y} \, \, \big | \, \, m_e(w) = \tilde m_e, \, c_e(w) = \tilde c_e \, \, \forall e \in \Ecal  \big   )  \leq 
C_3 \, e^{ - \frac{\epsilon}{k} \, c_3 \, \,   d_\Gcal(x,y)   }, 
$$
for some positive constant $C_3 = C_3 ( N, \beta, h, d_\Gcal^*  )$ and, 
\begin{equation}\label{eq:c3definition}
c_3  : = 
\frac{1}{40}  \, c_4  \,  \min \{   \log(  \frac{1}{c_1} ) , 1 \} \,> 0 .
\end{equation}
We note that there exist $K_0$ large enough such that for any $k \geq K_0$, we have that 
$\log (\frac{1}{c_1}) > 1$ for any $h \in (0, 1)$ (recall equation \ref{eq:C1definition}). This implies that, under such a choice of $k$, $ \log ( \frac{1}{c_1} ) = O(1)$ in the limit as $h \rightarrow 0$. 
Thus, $c_3 = O( c_4) = O(h^2)$ in the limit as $h \rightarrow 0$
uniformly in the admissible pairs $(\tilde m, \tilde c)$ (recall that \hyperlink{c4}{$c_4$} was introduced in Lemma \ref{lem:positivitygood}). This concludes the proof.
\end{proof}

\section{Proof of Theorem \ref{thm:maintheorem} and extensions}
\label{sect:proof of theorem}
In this section we prove Theorem \ref{thm:maintheorem} and discuss its extensions.

\subsection{Proof of Theorem \ref{thm:maintheorem}}
Consider a finite simple graph $\mathcal{G}$ and define $G$ by adding a ghost vertex to $\mathcal{G}$ as described above.
We first use Proposition \ref{prop:looprep} and
Lemma \ref{lem:colourswitchlem} and after that, using the fact that $M_{x,y} \leq m_{\{x,g\}}$, we obtain that, for any $\epsilon \in (0, 1)$, $k \in \mathbb{N}$, $h > 0$, $\beta > 0$, $N \in \mathbb{N}_{>1}$,
\begin{align}
\begin{split}
\langle  \varphi^1_x \, \varphi_y^1 \rangle^{spin}_{ \mathcal{G}, N, \beta, h}
 & = 
\frac{1}{h^2} \mathbb{E}_{G, N, \beta, h} \big (  M_{x,y}  \big ) \\ & 
\label{eq:termtobound} \leq 
 \frac{1}{h^2}  \mathbb{E}_{G, N, \beta, h} \big (   \mathbbm{1}\{ \mathcal{E}_{x,y, \epsilon, k}  \}   \, m_{\{x,g\}}  \big ) \, + \, \frac{1}{h^2}   \mathbb{E}_{G, N, \beta, h} \big (  
 \mathbbm{1}\{ \mathcal{E}^c_{x,y, \epsilon, k}  \}   M_{x,y}  \big ),
 \end{split}
\end{align}
where $^c$ denotes the complement of the event, and the event 
$ \mathcal{E}_{x,y, \epsilon, k}$ was defined above Lemma 
\ref{lem:linkcardinalityevent}. 
We now fix $\epsilon = \frac{1}{10}$ and $k = \max\{ K_0(d_\Gcal, N, \beta, h), K(d_\Gcal, N, \beta, h), K_0(d_\Gcal, N, \beta, 1), K(d_\Gcal, N, \beta, 1)\}$, this allows us to use  Lemma \ref{lem:linkcardinalityevent}
and Proposition \ref{prop:mainprop}, where these constants have been introduced.
From Lemma \ref{lem:linkcardinalityevent} we deduce that,
\begin{equation}\label{eq:firstbound}
 \mathbb{E}_{G, N, \beta, h} \big (  m_{\{x,g\}} \mathbbm{1}\{ \mathcal{E}_{x,y, \epsilon, k}  \}   \big ) \leq C_2 \,  e^{- \,  d_\Gcal(x,y)   }.
\end{equation}
For the second term in the right-hand side of (\ref{eq:termtobound}) we use Proposition \ref{prop:mainprop} and obtain,
\begin{multline}\label{eq:secondbound}
  \mathbb{E}_{G, N, \beta, h} \big (  
 \mathbbm{1}\{ \mathcal{E}^c_{x,y, \epsilon, k}  \}   M_{x,y}  \big )   \\ =
  \sum\limits_{\substack{
 \tilde m \in \mathcal{M}_{\Gcal} \, :  \\  \tilde m \in   \mathcal{E}^c_{x,y, \epsilon, k} } \,  }
\sum\limits_{ \substack{ \tilde c \in \mathcal{C}_{\Gcal}(\tilde m) \\ (\tilde m, \tilde c) \mbox{ \tiny admissible  }  }}
  \mathbb{P}_{G, N, \beta, h} \big (   \forall e \in \Ecal,    m_e(w) = \tilde m_e, c_e(w) = \tilde c_e \big )  \, \\ \mathbb{E}_{G, N, \beta, h} \big (    M_{x,y}  \, \big | \, 
  \forall e \in \Ecal,    m_e(w) = \tilde m_e, c_e(w) = \tilde c_e  \, \big ) \leq C_3 \, e^{  - \frac{1}{10} \frac{1}{k} \, c_3  \, d_{ \Gcal}(x,y)  }.
\end{multline}
Combining the previous expression with   (\ref{eq:firstbound}) in (\ref{eq:termtobound}) we obtain (\ref{eq:mainclaim}).
Note that the monotonicity properties of $K_0$ and $K$ guarantee that the chosen value of $k$  does not depend on $h$ for $h \in (0,1)$. Thus we deduce that the exponent in the right-hand side of the inequality in (\ref{eq:secondbound}) is $O(h^2)$ in the limit as $h \rightarrow 0$. This implies that \hyperlink{c0}{$c_0$} $ = O(h^2)$.
Now let $\Gcal$ be an infinite simple graph of bounded degree, let $(\Gcal_L)_{L \in \mathbb{N}}$ be a sequence of finite simple graphs such that $\Gcal_L \subset \Gcal$. 
By noting that $d_{\Gcal_L}(x,y) \geq d_{\Gcal}(x,y)$ for any $L \in \mathbb{N}$ and that the constants $c_3$ and $C_3$ do not depend on $L$, the proof of the theorem is concluded.

\endproof

\subsection{Extensions}
\label{sect:extensions}
A first natural extension of our main result is to the Spin O(N) model in $\mathbb{Z}^d$ in the presence of non-homogeneous coupling constants and a non-zero external magnetic field. More precisely, let $J = (J_{x,y})_{x, y \in \mathbb{Z}^d}$
be a matrix of non-negative real numbers
such that for any $x, y \in \mathbb{Z}^d$,  $J_{x,y} = J_{y,x}$ and $J_{x,x,} = 0$,
let $(\Lambda_L)_{L \in \mathbb{N}}$ be an infinite sequence
of subsets of $\mathbb{Z}^d$,
with $\Lambda_L \subset \Lambda_{L+1} \subset  \mathbb{Z}^d$.
For any set $\Lambda \subset \mathbb{Z}^d$ define the hamiltonian function,
\begin{equation}\label{eq:generalhamiltonian}
\mathcal{H}_{\Lambda, J, h} (\varphi) : = - \sum\limits_{x, y \in \Lambda } \, \, J_{x,y}  \, \, \varphi_{x} \cdot \varphi_y -  h \, \, \sum\limits_{x \in \Lambda } \varphi^N_x.
\end{equation}
Under the assumptions that the coupling constants have finite range and are uniformly bounded, namely 
$$
\exists K < \infty \, : \, \,\forall x, y \in \mathbb{Z}^d, 
 J_{x,y} \leq K \mbox{ and }  J_{x,y} = 0 \mbox{ if } \|x - y \|_1 > K,
$$
our main theorem holds also, with few adaptations in the proof being required.
Note also that, since our theorem holds for arbitrary graphs, it is not difficult to account for finite boxes in $\mathbb{Z}^d$ with periodic or empty boundary conditions. 
Finally, our method also works for spin systems with continuous symmetry, whose associated measure is not necessarily expressed in terms of Boltzmann weights. For example,  consider  the following  expectation operator, 
$\langle\cdot\rangle^{(k)}_{\Gcal,N,\beta,h}$,
with  $k \in \mathbb{N}$,
 $\beta, h \geq 0$, defined by 
\begin{equation}\label{eq:k-truncated measure}
\langle f\rangle^{(k)}_{\Gcal,N,\beta,h} : =\frac{1}{Z^{(k)}_{\Gcal,N,\beta,h}}\int_{\Omega_{\Gcal,N}}\mathrm{d}\varphi \, \,  \Big ( \prod_{ \{x, y\} \in \mathcal{E}  } \big ( 
\sum\limits_{\ell=0}^k  \beta^\ell  \, \frac{  (\varphi_{x} \cdot \varphi_y)^\ell}{\ell!} \big )   \Big )  \, \, \exp\big(  h \sum\limits_{x \in \Vcal } \varphi^N_x \big) \, \,  \, f(\varphi),
\end{equation}
for any function $f:\Omega_{\Gcal,N}\to \R$, where
$Z^{(k)}_{\Gcal,N,\beta,h}$ is a normalising constant that ensures $\langle 1\rangle^{k}_{\Gcal,N,\beta,h}=1$,
the graph $\mathcal{G} = (\Vcal, \Ecal)$ is finite and the other terms have been introduced in Section \ref{sec:definition}.
We refer to such an expectation operator as the \textit{$k$-truncated Spin $O(N)$ model}.
Note that the measure (\ref{eq:k-truncated measure}) corresponds to the Spin $O(N)$ model when $k  = \infty$. 
The case $k = 1$ and $h = 0$, has been considered in \cite{Chayes, D-Copin}, its corresponding loop representation
(which, in $\mathbb{Z}^d$, uses colours and pairings like ours \cite{Chayes}, while in the hexagonal lattice takes a simpler form \cite{D-Copin}) 
is a model of interest known as the 
\textit{loop O(N) model}. This model is interesting, for example, for its connections to Schramm-L\"owner evolution and other planar statistical mechanics models.
Our measure (\ref{eq:k-truncated measure})
interpolates between the two models as $k$ is varied between $1$ and $\infty$.
For this model, our main result can be reformulated as follows
(recall the definitions in Section \ref{sec:definition}).
\begin{thm}\label{thm:maintheorem2} 
Let $\mathcal{G}$ be an infinite simple graph with  bounded degree.
For any $h\neq 0$, $\beta \geq 0$, $k \in \mathbb{N}_{>0}$, $N\in \N_{\geq 2}$, there are constants $c_8 = c_8( \Gcal, 
N, \beta, h, k)$, $C_8 = C_8( \Gcal, 
N, \beta, h, k)$  such that the following holds. Let  $(\mathcal{G}_L)_{L \in \mathbb{N}}$, with 
$\mathcal{G}_L = ( \mathcal{V}_L, \mathcal{E}_L)\subset \Gcal$,
be an arbitrary sequence of finite graphs. Then,
for any $L \in \mathbb{N} $, for any $x, y \in \mathcal{V}_L$,
\begin{equation}\label{eq:claimtruncated}
 0 \leq \langle S^1_x S^1_y\rangle^{(k)}_{\mathcal{G}_L,N,\beta,h} \leq C_8 \, e^{-c_8 \, d_{\Gcal}(x,y)},
\end{equation}
where $d_{\Gcal}(x,y)$ denotes the graph distance between $x$ and $y$. Moreover, the choice of $c_8$ can be made so that $c_8 = O(h^2)$ in the limit as $h \rightarrow 0$.
\end{thm}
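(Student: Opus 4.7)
The plan is to run the entire pipeline of Sections \ref{sect:representation}--\ref{sect:proof of theorem} for the $k$-truncated model, noting that the truncation introduces only one structural change: a cap $m_e \leq k$ on the link cardinality of each \emph{original} edge. Indeed, expanding $\sum_{\ell=0}^{k}\beta^{\ell}(\varphi_x\cdot\varphi_y)^{\ell}/\ell!$ via the multinomial theorem and then integrating over the spheres using identity (\ref{eq:spinintegral}) yields exactly the same combinatorial expression as in Proposition \ref{prop:looprep}, except that the outer sums over edge-link configurations $m\in\widetilde{\Mcal}_G(A)$ are restricted to those satisfying $m_e=\sum_i m_e^i\leq k$ for every $e\in\mathcal{E}$. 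Ghost edges $\{x,g\}$ still carry the full exponential field term $e^{h\varphi_x^N}$ and thus remain unrestricted. All resulting coefficients are non-negative, so one obtains a random-path representation with a new probability measure $\mathbb{P}^{(k)}_{G,N,\beta,h}$ supported on this restricted configuration space; the lower bound $\langle S_x^1 S_y^1\rangle^{(k)}\geq 0$ in (\ref{eq:claimtruncated}) follows at once from this representation.

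The colour-switch lemma then transfers verbatim. The bijections $F_N$ and $F_1$ from the proof of Lemma \ref{lem:colourswitchlem} only add or remove $N$-links on the ghost edges $\{x,g\},\{y,g\}$ and recolour a single $1$-walk; none of these operations alter link cardinalities on original edges, so the preimage cardinalities (\ref{eq:relation1})--(\ref{eq:relation2}) and weight identities (\ref{eq:relation3})--(\ref{eq:relation5}) are unchanged. This yields $\mathbb{G}^{(k)}_{G,N,\beta,h}(x,y) = h^{-2}\,\mathbb{E}^{(k)}_{G,N,\beta,h}(M_{x,y})$.

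The local-time bounds of Section \ref{sec:linkbounds} also transfer with only cosmetic changes. In the proof of Lemma \ref{lemma:linkcardinalityset} every sum over $m_e^i$ is now restricted by $\sum_i m_e^i\leq k$; the bound $\sum_{n\in 2\mathbb{N}+q}u^n/n!\leq u^q e^u$ is replaced by the same bound applied to a truncated Poisson sum, and the $\mathcal{X}$-monotonicity (\ref{eq:monotonXi}) is untouched. The crucial point is that the $m_e\leq k$ constraint enters numerator and denominator of the relevant ratio symmetrically, so the explicit constant (\ref{eq:C1definition}) is merely replaced by a (possibly larger) quantity that still tends to zero as the local-time threshold diverges. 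Lemma \ref{lem:linkcardinalityevent} is then deduced identically.

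The final ingredient is the sampling machinery of Section \ref{sect:samplingprocedure}. The crucial positivity input Lemma \ref{lem:positivitygood} is proved by comparing the weights of configurations obtained by adding $0$ or $\geq 2$ extra $N$-links on the ghost edge $\{z,g\}$; since ghost edges are not truncated, this comparison is \emph{identical} to the untruncated case, and the bound $c_5\leq (k+2)/h^2$ persists. The walk-tracking strategy, the death-probability Lemma \ref{lem:deathwithpp}, the stochastic domination lemma, and Proposition \ref{prop:mainprop} are then inherited without modification, and the final assembly in the proof of Theorem \ref{thm:maintheorem} produces (\ref{eq:claimtruncated}) with a constant $c_8=O(h^2)$ as $h\to 0$. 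The main obstacle is purely bookkeeping: one must carefully propagate the $m_e\leq k$ constraint through every combinatorial identity of Sections \ref{sect:representation}--\ref{sect:samplingprocedure} and verify that at each step it appears symmetrically in any ratio being estimated, so that all constants depend on $k$ but none of the qualitative estimates deteriorate.
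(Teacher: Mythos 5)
Your proposal is correct, and it follows the same general pipeline as the paper: transfer the random-path representation (with the cap $m_e\le k$ on original edges only), observe that the colour-switch lemma is unaffected because it only manipulates ghost-edge links and recolourings, and rerun the sampling machinery. The non-negativity claim is also handled correctly by observing that all weights in the representation are non-negative. However, you miss a simplification that the paper explicitly points out: in the truncated model, every original vertex is \emph{almost surely} a $d^*_\Gcal k$-candidate (since $\sum_{y\sim x,\, y\in\Vcal} m_{\{x,y\}}\le d^*_\Gcal k$ deterministically), so the event $\Ecal_{x,y,\epsilon,d^*_\Gcal k}$ is empty, the first term of the decomposition (\ref{eq:termtobound}) vanishes identically, and Section \ref{sec:linkbounds} (Lemmas \ref{lemma:linkcardinalityset}--\ref{lem:linkcardinalityevent}) becomes unnecessary. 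Instead of exploiting this to collapse the proof, you spend effort propagating the $m_e\le k$ constraint through the local-time bounds of Section \ref{sec:linkbounds}, which is correct but superfluous. Also, a minor notational caution: $k$ is overloaded in your write-up, serving both as the truncation level and (via inheritance from the $k=\infty$ proof) as the candidate threshold; these should be distinguished — the natural choice for the threshold is $d^*_\Gcal k$ in terms of the truncation level $k$, and this distinction affects the constant you cite as $(k+2)/h^2$ in Lemma \ref{lem:positivitygood}, which becomes $(d^*_\Gcal k + 2)/h^2$.
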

Note that (\ref{eq:k-truncated measure}) does not necessarily make physical sense as a spin system for all values of $\beta \geq 0$,
since the measure  (given by $d \varphi$ times the interaction term) might be signed if $\beta$ is large.
Despite that, the spin-spin correlation, in the left-hand side of (\ref{eq:claimtruncated}), is non-negative 
and exhibits exponential decay for any non-zero value of the external magnetic field and for any $k \in \mathbb{N}_{>0}$, as our theorem states. 
The random path model associated to (\ref{eq:claimtruncated})
is completely analogous to the $k = \infty$ case, the only difference is that on every original edge at most $k$ links are allowed. 
Thus, all the steps of our proof apply with almost no difference 
(and the results of Section \ref{sec:linkbounds} are not necessary since every vertex is a.s. $d_{\Gcal}^*k$-candidate).
In particular, our result implies that its two point-function (defined as the ratio of partition functions with a $1$-walk connecting $x$ and $y$ and one without)  decays exponentially in the graph distance between $x$ and $y$.

\section*{Appendix}
\textbf{\textit{{Formal definition of paths, walks and loops.}}}
We will first define paths, which represent a connected set of links, after that we will introduce two classes of paths, walks (open paths) and loops (closed paths). 
Given $w \in \mathcal{W}_{\Gcal}$, we use $(\{x, y\}, p)$ to denote the $p^{th}$ link of $w$ which is on the edge $\{x,y\}$, with 
 $p \in \{1, \ldots,  m_{\{x,y\}}(w) \}$.
We say that a set of links \textit{$S$} in $w$,
$$
S = \big \{
(\{x_1, y_1\}, p_1),
(\{x_2, y_2\}, p_2),
\ldots 
((x_\ell, y_\ell), p_\ell)
\big \},$$
is \textit{pairing-connected in  $w$} if, for any 
pair of links, $(\{x, y\}, p)$,
$(\{x^\prime,y^\prime\}, p^\prime) \in S$,
there exists an ordered sequence of links in  $S$,
 $\big (
(\{x^\prime_1, y^\prime_1\}, p^\prime_1),$
$(\{x^\prime_2, y^\prime_2\}, p^\prime_2),
\ldots
(\{x^\prime_k, y^\prime_k\}, p^\prime_k)
\big ) \subset S$
such that the following two conditions hold simultaneously:
\begin{enumerate}[(i)]
\item $(\{x, y\}, p) = (\{x^\prime_1, y^\prime_1\}, p^\prime_1)$, and
$(\{x^\prime, y^\prime\}, p^\prime) = (\{x^\prime_k, y^\prime_k\}, p^\prime_k)$,
\item for any $i \in \{1, \ldots, k-1\}$,
$y^\prime_i = x^\prime_{i+1}$
and $(\{x^\prime_i, y^\prime_i\}, p^\prime_i )$
is paired to  $(\{x^\prime_{i+1}, y^\prime_{i+1}\}, p^\prime_{i+1} )$ at 
$y^\prime_i = x^\prime_{i+1}$.
\end{enumerate}
Paths are maximal pairing-connected sets. 
More precisely, a set of links $S$ of $w$ is a \textit{path} in $w$ if it is pairing-connected and there exists no pairing-connected set of links in  $w$, $S^\prime$, which is such that  $S^\prime  \supset S$  and $S^\prime \neq S$.
It is necessarily the case that all links belonging to the same path have the same colour.

We will now distinguish between different type of paths.
A path $S$ of $w$ is called a \textit{loop} if it is such that any link $(\{x,y\}, p) \in S$  is paired to another link at both its end-points. 
A path $S$ of $w$ is called a \textit{walk} if  $|S| = 1$  or if $|S|\geq 2$ and there exist precisely two distinct links in $S$ such that each of them is unpaired at  one end-point and paired at the other end-point.
Two such links will be called \textit{extremal links} for the walk
or extremal links for $w$.
From these definitions it follows that any path is either a loop or a walk, there are no other possibilities.

\textit{\textbf{Proof of Lemma \ref{lem:domination}.}}
Our goal is to show that, for any $u \in \{0, 1, \ldots \ell - 1\}$, 
\begin{equation}\label{eq:iteration}
{\Pcal}_{\tilde m, \tilde c,  F}  \big (   
\sum\limits_{i=1}^{\ell - u} X_i  + 
\sum\limits_{i=\ell - u+1}^\ell Y_i   > r   \bigm | \mathcal{F}_0 \big ) \leq 
{\Pcal}_{\tilde m, \tilde c,  F}  \big (   
\sum\limits_{i=1}^{\ell - u - 1} X_i  + 
\sum\limits_{i=\ell - u}^\ell Y_i   > r   \bigm | \mathcal{F}_0 \big ).
 \end{equation}
 Using (\ref{eq:iteration}) iteratively we deduce the lemma.
To begin, fix an arbitrary integer  $u \in \{0, 1, \ldots \ell - 1\}$, 
and observe that,
\begin{multline}\label{eq:previous666}
{\Pcal}_{\tilde m, \tilde c, , F}  \big (   
\sum\limits_{i=1}^{\ell - u} X_i  + 
\sum\limits_{i=\ell - u+1}^\ell Y_i   > r   \bigm | \mathcal{F}_0 \big ) \\
 =
{E}_{\tilde m, \tilde c, F} \Big (
{\Pcal}_{\tilde m, \tilde c,  F}  \big (   
X_{\ell - u}   > r  - 
\sum\limits_{i=1}^{\ell - u - 1} X_i   - \sum\limits_{i=\ell - u+1}^\ell Y_i   \Bigm | \mathcal{F}_{ T_{\ell - u - 1}  } ,    Y_{  \ell - u + 1}, 
  \ldots, Y_{  \ell}   \big )
\Big ),
\end{multline}
where ${E}_{\tilde m, \tilde c, F}$ denotes the expectation with 
respect to ${\Pcal}_{\tilde m, \tilde c, , F}$, 
the conditioning is on the whole history of the sampling procedure up to the stopping time $T_{\ell - u - 1}$ and on the   variables $Y_i$ with $i$ from $\ell - u + 1$ 
to $\ell$, these variables are independent of
$\mathcal{F}_{ T_{\ell - u - 1}  }$.
For  a lighter notation, use now $\tilde P( \cdot   )$
for ${\Pcal}_{\tilde m, \tilde c, F}  \big (   
\cdot \, \bigm | \mathcal{F}_{ T_{\ell - u - 1}  } ,    Y_{  \ell - u + 1}, 
  \ldots, Y_{  \ell}   \big )$, 
and $\tilde E$ for the expectation with respect to $\tilde P$.  
Additionally, we  set $t_{-1}: =  -1$ and  recursively define the variables,
$$
j \in \mathbb{N} \quad \quad t_j(\omega) : = \inf  \{  n >  t_{j-1}(\omega) \, : \mbox{ $x_n(\omega)$ is $k$-candidate} \},
$$
representing the times a $k$-candidate vertex is selected by the sampling procedure, again using the convention that $\inf \{ \emptyset \} = \infty$. For any $j \in \mathbb{N}$,  we denote by
$
d_j(\omega)
$
the step of the sampling procedure such that a selected walk dies for the $j$-th time at a $k$-candidate vertex,
i.e,  
$
t_{d_j}(\omega) :  = T_j(\omega),
$
for any integer $j \in \mathbb{N}$.
Note that,  for any $q \in \mathbb{N}$, 
\begin{equation}
\begin{aligned}
\tilde {\Pcal}  & \big (    
X_{\ell - u}   > q   \big )  
\\
&= 
\tilde   \Pcal \Big ( \{  \mbox{no selected walk dies at the steps } 
t_{d_{\ell - u-1}}, t_{d_{\ell - u-1}+1}, \ldots, t_{d_{\ell - u - 1} + q} \} \cap \{ t_{d_{\ell - u - 1} + q} < \infty   \}  \Big )
 \\
&= 
  \tilde {E} \Big ( 
   \tilde  P  \big ( 
   \{  t_{d_{\ell - u - 1} + q} < \infty \} \, \cap \, 
    \{ \mbox{the selected walk does not die at the step
    $ t_{d_{\ell - u - 1} + q}$ \} }  
    \bigm | \mathcal{F}_{{t_{d_{\ell - u-1}} + q - 1}} \big ) 
     \\
     &\qquad\qquad\mathbbm{1}  
     \{ \mbox{the selected walk does not die at the steps }  t_{d_{\ell - u-1}}, \ldots, t_{d_{\ell - u - 1} + q} \}
\cap \{  t_{d_{\ell - u - 1} + q} < \infty   \}     
      \Big ) 
      \\
   &  \leq (1 - c_6)  \, \, 
     \tilde P \big ( 
      \mbox{the selected walk does not die at the steps } t_{d_{\ell - u-1}}, t_{d_{\ell - u-1}+1}, \ldots, t_{d_{\ell - u - 1} + q-1}  \big ),
\end{aligned}
\end{equation}
where for the previous step we used Lemma \ref{lem:deathwithpp}.
Iterating the previous bound, we deduce that,
$$
{P}_{\tilde m, \tilde c, F}  \big (   X_{\ell - u}   > q   \bigm | \mathcal{F}_{ T_{\ell - u - 1}  } ,    Y_{  \ell - u + 1}, 
  \ldots, Y_{  \ell}   \big )
 \leq (1 - c_6)^q = 
{P}_{\tilde m, \tilde c, F}   (   Y_{\ell - u} > q ).
$$
Using the previous inequality in (\ref{eq:previous666}) and the fact that $q$ was arbitrary, we deduce
(\ref{eq:iteration})
and thus conclude the proof.

\end{document}